\let\origsection=\section \def\section{\@ifstar{\origsection*}{\mysection}} 
\def\mysection{\@startsection{section}{1}\z@{.7\linespacing\@plus\linespacing}{.5\linespacing}{\normalfont\scshape\centering\S}}
\renewcommand{\PrintDOI}[1]{\doi{#1}}
\numberwithin{equation}{section}
\numberwithin{figure}{section}
\def\rmlabel{\upshape({\itshape \roman*\,})}
\def\alabel{\upshape({\itshape \alph*\,})}
\def\nlabel{\upshape({\itshape \arabic*\,})}
\def\greek#1{\expandafter\@greek\csname c@#1\endcsname}
\def\Greek#1{\expandafter\@Greek\csname c@#1\endcsname}
\def\@greek#1{\ifcase#1
	\or $\alpha$%
	\or $\beta$%
	\or $\gamma$%
	\or $\delta$%
	\or $\epsilon$%
	\or $\zeta$%
	\or $\eta$%
	\or $\theta$%
	\or $\iota$%
	\or $\kappa$%
	\or $\lambda$%
	\or $\mu$%
	\or $\nu$%
	\or $\xi$%
	\or $o$%
	\or $\pi$%
	\or $\rho$%
	\or $\sigma$%
	\or $\tau$%
	\or $\upsilon$%
	\or $\phi$%
	\or $\chi$%
	\or $\psi$%
	\or $\omega$%
\fi}
\def\@Greek#1{\ifcase#1
	\or $\mathrm{A}$%
	\or $\mathrm{B}$%
	\or $\Gamma$%
	\or $\Delta$%
	\or $\mathrm{E}$%
	\or $\mathrm{Z}$%
	\or $\mathrm{H}$%
	\or $\Theta$%
	\or $\mathrm{I}$%
	\or $\mathrm{K}$%
	\or $\Lambda$%
	\or $\mathrm{M}$%
	\or $\mathrm{N}$%
	\or $\Xi$%
	\or $\mathrm{O}$%
	\or $\Pi$%
	\or $\mathrm{P}$%
	\or $\Sigma$%
	\or $\mathrm{T}$%
	\or $\mathrm{Y}$%
	\or $\Phi$%
	\or $\mathrm{X}$%
	\or $\Psi$%
	\or $\Omega$%
\fi}
\AddEnumerateCounter{\greek}{\@greek}{24}
\AddEnumerateCounter{\Greek}{\@Greek}{12}
\def\glabel{\upshape({\itshape \greek*})}
\let\polishlcross=\l
\def\l{\ifmmode\ell\else\polishlcross\fi}
\def\qand{\quad\text{and}\quad}
\let\sm=\setminus
\def\moverlay{\mathpalette\mov@rlay}
\def\mov@rlay#1#2{\leavevmode\vtop{   \baselineskip\z@skip \lineskiplimit-\maxdimen
   \ialign{\hfil$\m@th#1##$\hfil\cr#2\crcr}}}
\newcommand{\charfusion}[3][\mathord]{
    #1{\ifx#1\mathop\vphantom{#2}\fi
        \mathpalette\mov@rlay{#2\cr#3}
      }
    \ifx#1\mathop\expandafter\displaylimits\fi}
\newcommand{\dcup}{\charfusion[\mathbin]{\cup}{\cdot}}
\DeclareFontFamily{U}  {MnSymbolC}{}
\DeclareSymbolFont{MnSyC}         {U}  {MnSymbolC}{m}{n}
\DeclareFontShape{U}{MnSymbolC}{m}{n}{
    <-6>  MnSymbolC5
   <6-7>  MnSymbolC6
   <7-8>  MnSymbolC7
   <8-9>  MnSymbolC8
   <9-10> MnSymbolC9
  <10-12> MnSymbolC10
  <12->   MnSymbolC12}{}
\DeclareMathSymbol{\powerset}{\mathord}{MnSyC}{180}
\let\epsilon=\varepsilon
\let\eps=\epsilon
\let\rho=\varrho
\let\theta=\vartheta
\let\kappa=\varkappa
\def\NN{{\mathds N}}
\def\RR{{\mathds R}}
\def\QQ{{\mathds Q}}
\def\RT{\mathrm{RT}}
\def\ex{\mathrm{ex}}
\newcommand{\ccF}{\mathscr{F}}
\theoremstyle{plain}
\newtheorem{thm}{Theorem}[section]
\newtheorem{prop}[thm]{Proposition}
\newtheorem{clm}[thm]{Claim}
\newtheorem{fact}{Fact}[thm]
\newtheorem{fakt}[thm]{Fact}
\newtheorem{cor}[thm]{Corollary}
\newtheorem{lemma}[thm]{Lemma}
\theoremstyle{definition}
\newtheorem{dfn}[thm]{Definition}
\let\phi=\varphi
\begin{document}

\title[The Ramsey-Tur\'an problem for cliques]
{The Ramsey-Tur\'an problem for cliques}

\dedicatory{Dedicated to Vera T. S\'os}
\author[Clara~M.~L\"uders]{Clara Marie L\"uders} 
\author[Christian Reiher]{Christian Reiher}
\address{Fachbereich Mathematik, Universit\"at Hamburg, Hamburg, Germany}
\email{Christian.Reiher@uni-hamburg.de}
\email{Clara.Marie.Lueders@gmail.com}

\subjclass[2010]{05C35}
\keywords{Ramsey-Tur\'an problem, cliques}

\begin{abstract}
An important question in extremal graph theory raised by Vera T. S\'os asks to determine 
for a given integer $t\ge 3$ and a given positive real number $\delta$ the asymptotically
supremal edge density $f_t(\delta)$ that an $n$-vertex graph can have provided it contains 
neither a complete graph $K_t$ nor an independent set of size $\delta n$. 

Building upon recent work of Fox, Loh, and Zhao [{\it The critical window for the classical 
Ramsey-Tur\'an problem}, Combinatorica {\bf 35} (2015), 435--476], we prove that if 
$\delta$ is sufficiently small (in a sense depending on $t$), then
\[
	f_t(\delta)=
		\begin{cases}
			\frac{3t-10}{3t-4}+\delta-\delta^2 & \text{ if $t$ is even,} \cr
			\frac{t-3}{t-1}+\delta & \text{ if $t$ is odd.}
		\end{cases}
\]
\end{abstract}

\maketitle

\section{Introduction}
P.~Tur\'an~\cite{Turan} established a new subarea of extremal combinatorics
nowadays bearing his name. In the context of graphs, the fundamental question he proposed is to
determine, for a given positive number~$n$ and a given graph~$F$, the maximum number~$\ex(n, F)$
of edges that a graph of order $n$ can have provided that it does not contain~$F$ 
as a subgraph. Tur\'an himself gave the complete answer if $F$ is a clique, and an
asymptotically satisfactory solution for all graphs $F$ has been obtained by the work of 
Erd\H{o}s, Stone, and Simonovits (see~\cites{ErSt46, ErSi66}). 
Curiously, the corresponding problem for hypergraphs
is wide open, even in the $3$-uniform case.

Another branch of combinatorics related to our discussion, called Ramsey theory, was initiated 
by F.~P.~Ramsey~\cite{Ramsey30} and since then it has been developed into a coherent and 
successful body of results. 
A somewhat special yet typical case of Ramsey's original theorem asserts that if $n$ is large 
enough depending on $k$, then no matter how one colours the edges of a complete graph of 
order $n$ using two colours, there will always be a monochromatic complete subgraph of 
order $k$.
 
Vera T.~S\'os discovered a beautiful way of combining Ramsey theory with Tur\'an theory by 
asking and investigating the following question: Given a positive integer $n$, 
a positive real number $m$, and a graph~$F$, what is the maximum number $\RT(n, m, F)$ of 
edges that a graph~$G$ of order $n$ can have if it does not contain $F$ as a subgraph 
and $\alpha(G)<m$, 
i.e., if any~$X\subseteq V(G)$ with $|X|\ge m$ spans at least one edge? 

For example, if $m=n+1$ and $F$ has at least one edge, then the condition on independent 
sets becomes vacuous and one recovers Tur\'an's original problem, i.e., 
one has $\RT(n, n, F)=\ex(n, F)$. 
On the other hand, if~$m$ is very small, then by Ramsey's theorem each graph of order~$n$
contains either a clique of order~$v(F)$ (and hence, in particular, a subgraph isomorphic 
to~$F$) or an independent set of order~$\lceil m\rceil$, meaning that the definition 
of~$\RT(n, m, F)$ degenerates to the ``maximum of the empty set.'' 
Using a quantitative version of Ramsey's theorem, this can be seen to happen, 
e.g., if $m<n^{1/v(F)}$ and $n$ is large. 
So for fixed $n$ and $F$ the problem of determining $\RT(n, m, F)$ is mostly dominated
by Ramsey theoretic phenomena for very small $m$ and by Tur\'an theory for very large~$m$.
If $m$ is of medium size, however, the problem intriguingly combines the flavours of 
both areas. For further information on Ramsey-Tur\'an theory the reader is referred to the 
comprehensive survey~\cite{SS01} by Simonovits and S\'os. 

\medskip

In this article we restrict our attention to the perhaps most classical case 
that $m=\delta n$ for some small~$\delta>0$ and $F=K_t$ is a clique. To eliminate minor fluctuations arising from small
values of~$n$ one usually focuses on the {\it Ramsey-Tur\'an density function}
$f_t\colon (0, 1)\longrightarrow\RR$ defined by 
\begin{equation*}
	f_t(\delta)=\lim_{n\to\infty}\frac{\RT(n, \delta n, K_t)}{n^2/2}\,. 
\end{equation*}

It is well known and easy to confirm that this limit does indeed exist. 
Since $f_t$ is evidently a nondecreasing function of $\delta$, a further simplification 
may be achieved by passing to the {\it Ramsey-Tur\'an density} $\rho(K_t)$ 
defined by 
\[
	\rho(K_t)=\lim_{\delta\to 0} f_t(\delta)\,.
\]

Perhaps surprisingly at first, the difficulty of determining the quantities just introduced 
depends significantly on the parity of $t$. The first case where something happens
is $t=3$. One has $\RT(n, \delta n, K_3)\le\delta n^2/2$ because if a graph $G$ of order $n$
has a vertex $x$ whose degree is at least $\delta n$, then either the neighbourhood of $x$ is 
independent, which gives $\alpha(G)\ge \delta n$, or this neighbourhood spans an edge $yz$,
in which case $xyz$ is a triangle. This simple observation implies $f_3(\delta)\le \delta$
for all $\delta>0$. Explicit examples described by Brandt~\cite{B10} show that 
for~$\delta<\tfrac13$
this bound is optimal (see Proposition~\ref{prop:Brandt} and also 
Corollary~\ref{cor:odd-lower} below), i.e., that we have
$f_3(\delta)=\delta$ for all $\delta\in \bigl(0, \tfrac13\bigr)$; in particular, $\rho(K_3)=0$.
Concerning larger odd cliques, Erd\H{o}s and S\'os~\cite{ES69} proved 
$\rho(K_{2r+1})=\tfrac{r-1}r$ for 
all positive integers $r$, and a quantitative version of their argument yields 
\[
	\frac{r-1}r \le f_{2r+1}(\delta) \le \frac{r-1}r + 2\delta
\]
for all positive $\delta$. 

The first result addressing an even clique was obtained by Szemer\'edi~\cite{Sz72}, 
who proved that $\rho(K_4)\le \tfrac14$. At that moment it still seemed conceivable that the
truth might be ${\rho(K_4)=0}$. But a few years later Bollob\'as and Erd\H{o}s~\cite{BE76}
ruled out this possibility by exhibiting a remarkable geometric construction 
demonstrating the optimality of Szemer\'edi's bound; that is they completed the proof of 
$\rho(K_4)= \tfrac14$. 
Still later the Ramsey-Tur\'an densities of all even cliques were determined by 
Erd\H{o}s, Hajnal, S\'os, and Szemer\'edi~\cite{EHSS},
the answer being 
\begin{equation}\label{eq:EHSS}
	\rho(K_{2r})=\tfrac{3r-5}{3r-2} \quad \text{ for all } r\ge 2\,.
\end{equation}  

The understanding as to how fast $f_4(\delta)$ converges to $\tfrac 14$
developed as follows. Szemer\'edi's original argument yields
\[
	f_4(\delta)\le \tfrac14+O\left(\bigl(\log\log\tfrac1{\delta}\bigr)^{-1/2+o(1)}\right)\,.
\]
Conlon and Schacht observed independently in unpublished work that the Frieze-Kannan 
regularity lemma from~\cite{FK99} can be used to improve this to
\[
	f_4(\delta)\le \tfrac14+O\left(\bigl(\log\tfrac1{\delta}\bigr)^{-1/2}\right)\,.
\]
Significant further progress is due to Fox, Loh, and Zhao~\cite{FLZ15}, who obtained
\begin{equation}\label{eq:FLZ}
	\tfrac14+\delta-\delta^2\le f_4(\delta)\le\tfrac14 + 3\delta
\end{equation}
for sufficiently small $\delta$ and asked 
\begin{enumerate}[label=\nlabel]
\item how this gap can be narrowed down further 
\item and whether comparable results could be proved for larger even cliques
and, in particular,  
whether $f_{2r}(\delta)=\rho(K_{2r})+\Theta(\delta)$ holds for all $r\ge 2$. 
\end{enumerate} 

Our main result addresses both questions. Much to our own surprise, it turned out
that at least for $\delta\ll r^{-1}$ there is a precise formula for the values of the 
Ramsey-Tur\'an density function.

\begin{thm}\label{thm:even}
	If $r\ge 2$ and $\delta\ll r^{-1}$, then 
	$f_{2r}(\delta)=\tfrac{3r-5}{3r-2}+\delta-\delta^2$. 
\end{thm}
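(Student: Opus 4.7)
The plan is to establish matching lower and upper bounds. The lower bound is proved by construction, extending the constructions underlying both~\eqref{eq:EHSS} and~\eqref{eq:FLZ}; the complementary upper bound carries the principal analytic content.

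For the construction, I would begin from the Erd\H{o}s--Hajnal--S\'os--Szemer\'edi blueprint realizing~\eqref{eq:EHSS}, in which a Bollob\'as--Erd\H{o}s gadget is glued into a clique-join framework so as to attain density $\tfrac{3r-5}{3r-2}$ while keeping $K_{2r}$-freeness and independence number $o(n)$. Following the Fox--Loh--Zhao recipe of~\cite{FLZ15}, I would then insert into a suitable independent ``side'' of the BE gadget an additional graph of edge density close to $\delta-\delta^2$ and independence number strictly below $\delta n$, taking care that the insert, in combination with the external completions, creates no $K_{2r}$. Tracking edge counts carefully and optimizing the part sizes of the template should yield the extra term $\delta-\delta^2$ precisely.

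For the upper bound I would follow a regularize--stabilize--refine strategy. Step~1 applies Szemer\'edi's regularity lemma and extracts a weighted reduced graph~$R$ in which an edge corresponds to a regular pair of density above a small threshold. Step~2 is a stability result: any $K_{2r}$-free graph of density within $o(1)$ of $\tfrac{3r-5}{3r-2}$ with $\alpha(G)<\delta n$ must, up to altering $o(n^2)$ edges, agree with the EHSS template. This should follow by adapting the Bollob\'as--Erd\H{o}s stability for $K_4$ used in~\cite{FLZ15} and building on the iterative structure of~\cite{EHSS}. Step~3 is an exact local count of the edges living inside each part of the stabilized template; the crucial estimate says that the part replacing an ``independent side'' of the BE gadget has density at most $\delta-\delta^2+o(1)$, and this should follow from a local clique-freeness imposed by the outer completions in order to preserve $K_{2r}$-freeness, together with the global hypothesis $\alpha(G)<\delta n$.

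The principal obstacle is reconciling Steps~2 and~3 quantitatively: the $o(n^2)$ slack in the stability conclusion has to be polynomially small in $\delta$ so as not to swamp the exact $(\delta-\delta^2)n^2/2$ term, which forces the regularity parameters in Step~1 to be chosen in a $\delta$-aware manner. A further subtlety, indicated by the hypothesis $\delta\ll r^{-1}$ in the theorem, is that the EHSS template sizes are only well-separated for $\delta$ small relative to $1/r$; relaxing this hypothesis would probably allow new extremal configurations to arise and require a separate analysis.
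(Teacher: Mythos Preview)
Your outline follows the paper's overall architecture—construction for the lower bound, regularity plus stability plus refined counting for the upper bound—but the decisive step is mislocated and your proposed resolution of the ``principal obstacle'' would not work.

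For the upper bound, you write that the crucial estimate is that ``the part replacing an `independent side' of the BE gadget has density at most $\delta-\delta^2+o(1)$.'' This is not where the $-\delta^2$ comes from. In the extremal template the parts $B_1$ and $B_2$ carry only $o(n^2)$ internal edges; the saving of order $\delta^2 n^2$ arises from a tension between the \emph{cross} edges $e(B_1,B_2)$ and the internal edges of $B_1$ (and symmetrically $B_2$). Concretely, the paper partitions $B_1$ into sets $P,Q,R,S$ according to degree into $B_2$, shows that vertices in $S$ (those nearly complete to $B_2$) are isolated inside $B_1$, that $B_1\setminus P$ contains no $C_3$, $C_5$, or $C_7$, and then invokes the elementary lemma that $e(H)\le\alpha(H)^2$ for $\{C_3,C_5,C_7\}$-free graphs $H$. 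The term $-\tfrac12\delta^2 n^2$ appears in the bound for $e(B_i,V)$, $i\in\{1,2\}$, not as a density inside any single part.

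More seriously, you correctly identify that the $o(n^2)$ slack from stability threatens to swamp the $\delta^2 n^2$ target, but your remedy—choosing the regularity parameters ``in a $\delta$-aware manner''—cannot succeed: the regularity lemma yields tower-type dependence, so no polynomial-in-$\delta$ accuracy is available along that route. The paper avoids this trap entirely. It fixes a single small $\eps$ independent of $\delta$, uses regularity and stability to obtain a coarse partition, and then runs an iterative vertex-relocation procedure to upgrade this to an \emph{exact partition} satisfying a list of local minimum-degree conditions between classes. With that structure in hand, the edge-counting step is purely combinatorial and carries no $\eps$-error into the final inequality; the approximation constants are used only to trigger clique-embedding contradictions, never added to the edge bound. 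This decoupling of the approximation scale $\eps$ from the target scale $\delta$ is the key idea you are missing.

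A minor point on the lower bound: the Fox--Loh--Zhao modification does not ``insert a graph of density $\delta-\delta^2$'' into a side of the Bollob\'as--Erd\H{o}s graph; rather, it selects random subsets $X\subseteq A$, $Y\subseteq B$ of size about $\delta n$ and makes them complete to the opposite side, which after bookkeeping yields the extra $\delta-\delta^2$ in the edge count. The paper then combines this $K_4$-free gadget on $V_1\cup V_2$ with Brandt-type triangle-free graphs on $V_3,\ldots,V_r$ and full bipartite joins elsewhere.
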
 

The hard part of this result is the upper bound and we would like to restate it 
here in an elementary form, i.e., without talking about the function $f_{2r}$.

\enlargethispage{2em}
\begin{thm}\label{thm:main}
	For every integer $r\ge 2$ there exists a real number $\delta_*>0$ such that 
	if~$\delta\le \delta_*$, then every graph $G$ on $n$ vertices with 
		\[
		\alpha(G)<\delta n \qand e(G)> 
		\bigl(\tfrac{3r-5}{3r-2}+\delta-\delta^2\bigr)\tfrac{n^2}2
	\]
		contains a $K_{2r}$.  
\end{thm}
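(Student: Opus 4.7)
I would follow the same overall framework as the proof by Fox, Loh, and Zhao~\cite{FLZ15} of the $r=2$ case, augmenting it with a structural analysis adapted to general $r$. Assume $G$ is a counterexample: an $n$-vertex graph with $\alpha(G)<\delta n$, $e(G)>\bigl(\tfrac{3r-5}{3r-2}+\delta-\delta^2\bigr)\tfrac{n^2}2$, and no $K_{2r}$.

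As a first step I would apply Szemer\'edi's regularity lemma with a parameter $\eps\ll\delta$, obtaining an equipartition $V(G)=V_1\dcup\cdots\dcup V_k$ and a weighted reduced graph $R$ on $[k]$ in which each pair $\{i,j\}$ carries the weight $d(V_i,V_j)$ whenever $(V_i,V_j)$ is $\eps$-regular (and weight $0$ otherwise). Up to an error vanishing with $\eps$, the total weight of $R$ exceeds the EHSS threshold $\tfrac{3r-5}{3r-2}$ by more than $\delta-\delta^2$. A useful quantitative feature is that $k$ may be bounded in terms of $\eps$ alone, so that by choosing $\delta_*\le 1/k$ we ensure that $|V_i|=n/k>\delta n\ge\alpha(G[V_i])$ and consequently each $V_i$ contains an internal edge.

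The core of the argument would be a structural theorem for $R$: any weighted graph whose total weight exceeds $\tfrac{3r-5}{3r-2}$ either directly admits a configuration that yields a $K_{2r}$ in $G$ via a standard regularity-based embedding (a dense clique of order $2r$, a blow-up of a smaller extremal graph in which some part has many internal edges, etc.), or else closely resembles the blow-up of a specific ``template'' in which most edges have weight essentially $1$ and form a near-complete multipartite skeleton, while the ``missing'' weight is concentrated in a single Bollob\'as-Erd\H{o}s-like sub-configuration on $3$ special parts $V_a,V_b,V_c$. I would establish this by importing the iterative argument behind~\eqref{eq:EHSS} from~\cite{EHSS} and combining it with a stability version that tracks exactly where the deficit may accumulate.

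Given the template, the final step embeds $K_{2r}$. Within $V_a\cup V_b\cup V_c$ the hypothesis $\alpha(G)<\delta n$ together with the surplus $\delta-\delta^2$ should supply just enough extra edges beyond the Bollob\'as-Erd\H{o}s density to force a $K_4$ whose four vertices share a very large common neighbourhood in each of the remaining $r-2$ skeleton parts; this is the quantitatively sharp FLZ counting, applied with the improved surplus available here. In each skeleton part the smallness of $\alpha$ then permits one to pick two adjacent vertices inside that common neighbourhood, and amalgamating all $4+2(r-2)=2r$ choices produces the desired $K_{2r}$. The main obstacle, and the source of the improvement over~\eqref{eq:FLZ}, is the structural theorem: for $r=2$ there are no skeleton parts and Fox, Loh, and Zhao can locate the required triangle directly from the density surplus, but for general $r$ one must rule out that the deficit is spread across several incomparable Bollob\'as-Erd\H{o}s-like sub-configurations, and this will require a delicate weight-redistribution argument in $R$ combined with careful book-keeping to keep the regularity error below the very small surplus $\delta-\delta^2$.
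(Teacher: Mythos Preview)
Your plan contains a fatal circularity in the hierarchy of constants. You propose to apply the regularity lemma with accuracy $\eps\ll\delta$, so that the surplus $\delta-\delta^2$ survives passage to the reduced graph $R$; but then the number of parts $k$ is at least a tower in $1/\eps$ and hence in $1/\delta$, so the later requirement $\delta_*\le 1/k$ cannot be met for any fixed $\delta_*$. Conversely, if you fix $\eps$ in advance (depending only on $r$) so that $k$ and thus $\delta_*$ can be chosen, then the regularity error in ``total weight of $R$ exceeds $\tfrac{3r-5}{3r-2}$'' is of order $\eps$, which is enormous compared to the surplus $\delta-\delta^2$ you are trying to exploit. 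In short, a weighted reduced graph simply cannot see a density excess of size $\Theta(\delta)$ once $\delta$ is much smaller than the regularity parameter, so the ``quantitatively sharp FLZ counting'' you invoke in the final step has no input to work with. This is also why the FLZ argument itself only yields the upper bound $\tfrac14+3\delta$ in~\eqref{eq:FLZ}, not the sharp $\tfrac14+\delta-\delta^2$.

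The paper's route is genuinely different and is designed precisely to get around this obstacle. Regularity is applied once with a \emph{fixed} parameter $\eta$ (depending only on $r$), giving only a coarse partition $V=A_1\cup\cdots\cup A_r$ with errors of size $\eta\gg\delta$. This partition is then refined, by iteratively relocating individual vertices, into an $(r,\eps)$-exact partition $V=B_1\cup\cdots\cup B_r$ satisfying a long list of one-sided minimum-degree conditions between classes (Definition~\ref{dfn:reps}). The sharp bound is obtained not in any reduced graph but by direct edge counting in $G$: one proves separately that $e(B_i,V)\le |B_i|(n-|B_i|)+\delta n|B_i|$ for $i\ge 3$ and a companion inequality for $i=1,2$, and sums. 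The term $-\delta^2$ emerges from an elementary lemma (Lemma~\ref{lem:C7}) saying that a graph with no $C_3$, $C_5$, or $C_7$ has at most $\alpha(G)^2$ edges, applied inside $B_1$ after one shows that the relevant part of $B_1$ is free of short odd cycles. None of this is visible at the level of the reduced graph, and the ``delicate weight-redistribution argument in $R$'' you anticipate would not recover it.
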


Incidentally, such an exact formula does also hold for odd cliques. 

\begin{thm}\label{thm:odd}
	If $r\ge 1$ and $\delta\ll r^{-1}$, then $f_{2r+1}(\delta)=\tfrac{r-1}r+\delta$.
\end{thm}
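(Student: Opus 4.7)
By the result of Brandt recorded as Proposition~\ref{prop:Brandt}, for every $\eta<\tfrac13$ and every sufficiently large $m$ there exist $K_3$-free graphs on $m$ vertices with independence number below $\eta m$ and edge density approaching $\eta$. Setting $\eta=r\delta$ (admissible because $\delta\ll r^{-1}$), partition an $n$-set into $r$ equal blocks $V_1,\ldots,V_r$ of size $m=n/r$, place a copy of such a Brandt graph inside each block, and insert every edge between different blocks. The resulting graph $G$ is $K_{2r+1}$-free (each $V_i$ contributes at most two vertices to any clique), satisfies $\alpha(G)<\eta m=\delta n$ (every independent set is contained in a single block), and its edge density tends to $\tfrac{r-1}{r}+\delta$ as $n\to\infty$. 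This yields $f_{2r+1}(\delta)\ge\tfrac{r-1}{r}+\delta$.

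\textbf{Upper bound.} I would argue by induction on $r$, the base case $r=1$ being the triangle bound $f_3(\delta)\le\delta$ recalled in the introduction. For the inductive step, consider a $K_{2r+1}$-free graph $G$ on $n$ vertices with $\alpha(G)<\delta n$ and suppose for contradiction that $e(G)>(\tfrac{r-1}{r}+\delta)\tfrac{n^2}{2}$. A direct computation gives $\tfrac{r-1}{r}-\tfrac{3r-5}{3r-2}=\tfrac{2}{r(3r-2)}>0$, so Theorem~\ref{thm:main} already supplies a copy of $K_{2r}$ in $G$. Moreover the $K_{2r+1}$-freeness of $G$ entails two local obstructions to be exploited: the common neighborhood of every $K_{2r}$ in $G$ is empty, and the common neighborhood of every $K_{2r-1}$ in $G$ is \emph{independent} (otherwise an edge there, together with the $K_{2r-1}$, would form a $K_{2r+1}$), and hence has size at most $\alpha(G)-1<\delta n$.

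The plan is to locate a vertex $v$ such that $H:=G[N(v)]$ itself meets the hypotheses of Theorem~\ref{thm:main}; then $H$ contains a $K_{2r}$, and adjoining $v$ yields the contradictory $K_{2r+1}\subseteq G$. Since $2e(G)=\sum_u\deg(u)$, some vertex $v$ has $m:=\deg(v)>(\tfrac{r-1}{r}+\delta)n$, so $H$ is $K_{2r}$-free with $\delta':=\alpha(H)/m<\delta n/m<\tfrac{r\delta}{r-1}$, and it remains to secure $e(H)>(\tfrac{3r-5}{3r-2}+\delta'-\delta'^{2})\tfrac{m^2}{2}$. To that end I would combine the global triangle identity $\sum_u e(G[N(u)])=3T(G)$ with the inequality $3T(G)\ge\sum_u\deg(u)^2-n\cdot e(G)$ (which issues from $|N(u)\cap N(w)|\ge\deg(u)+\deg(w)-n$) and Cauchy--Schwarz on $\sum_u\deg(u)^2$, and then perform a weighted averaging over vertices of large degree, calibrated against the Kruskal--Katona-type bound $\#K_{2r}<\tfrac{\delta n}{2r}\cdot\#K_{2r-1}$ that follows from the second local obstruction.

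\textbf{Main obstacle.} The chief difficulty is to pin down the coefficient of $\delta$ in the upper bound to exactly $1$, rather than the $2$ delivered by the original Erd\H{o}s--S\'os argument. Indeed, in the extremal lower-bound construction, a typical neighborhood $G[N(v)]$ has edge density only $\tfrac{r-2}{r-1}+O(r\delta/(r-1))$, which lies \emph{below} the Theorem~\ref{thm:main} threshold whenever $\delta\ll r^{-1}$; the extra edges of $G$ beyond the construction must therefore be exploited in a targeted way to push $e(H)$ above that threshold. Accomplishing this will require invoking the full strength of Theorem~\ref{thm:main} (including its $-\delta^{2}$ correction) and a careful balancing of the Cauchy--Schwarz slack against the structural constraints supplied by the independent common neighborhoods of the $(2r-1)$-cliques of $G$.
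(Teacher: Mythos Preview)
Your lower bound is correct and matches the paper's Corollary~\ref{cor:odd-lower}.

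The upper bound proposal, however, has a genuine gap that you yourself identify but do not close. Your plan is to find a vertex $v$ whose neighbourhood $H=G[N(v)]$ is dense enough to trigger Theorem~\ref{thm:main} and thereby produce a $K_{2r}$ inside $N(v)$. But as you correctly observe, in the extremal construction a typical neighbourhood has density only $\tfrac{r-2}{r-1}+O(\delta)$, which lies strictly below the threshold $\tfrac{3r-5}{3r-2}$ by a fixed amount $\tfrac{1}{(r-1)(3r-2)}$ independent of $\delta$. When $e(G)$ exceeds $\bigl(\tfrac{r-1}{r}+\delta\bigr)\tfrac{n^2}{2}$ by only an arbitrarily small $\epsilon n^2$, there is no mechanism by which the vague averaging and Cauchy--Schwarz considerations you mention could bridge this constant-order gap in any single neighbourhood. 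The triangle-counting identity $\sum_u e(G[N(u)])=3T(G)$ bounds the \emph{average} of $e(G[N(u)])$, and that average is genuinely too small; no amount of slack analysis will manufacture a vertex whose neighbourhood density jumps by a fixed constant. Your paragraph labelled ``Main obstacle'' is an accurate diagnosis that the argument is incomplete, not a sketch of how to complete it.

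The paper proceeds entirely differently and does not invoke Theorem~\ref{thm:main} at all. After a standard reduction to the minimum-degree regime $\delta(G)\ge\tfrac{r-1}{r}n$ (Proposition~\ref{prop-odd-mindeg}), one colours each edge $xy$ \emph{red} if $|N(x)\cap N(y)|$ is small (below roughly $\tfrac{r-1}{r}(d(x)+d(y))-\tfrac{r-1}{r+1}n$) and \emph{green} otherwise. A short pigeonhole argument (Corollary~\ref{cor:red}) shows the red graph is $K_{r+1}$-free, hence has at most $\tfrac{r-1}{2r}n^2$ edges by Tur\'an. The core of the proof is then to show that the green graph has maximum degree below $\delta n$: if some vertex $x$ had $\delta n$ green neighbours, one would find a triangle $xyz$ with $xy,xz$ green, and structural lemmas about edge-maximal $K_{2r+1}$-free graphs (Lemma~\ref{lem:max}) force $|N(x)\cap N(y)\cap N(z)|$ to be so large that it must contain a $K_{2r-2}$, yielding the forbidden $K_{2r+1}$. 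This red/green decomposition is precisely what pins the coefficient of $\delta$ to $1$ rather than $2$.
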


\subsection*{Organisation} The lower bound constructions establishing that $f_t(\delta)$
has at least the value claimed in Theorem~\ref{thm:even} and Theorem~\ref{thm:odd} are given
in Section~\ref{sec:constructions}. 
The upper bound for the Ramsey Tur\'an density function of odd cliques is proved in 
Section~\ref{sec:odd}. The proof of Theorem~\ref{thm:main} constitutes the 
main part of this article and occupies the Sections~\ref{sec:even-intro}--\ref{sec:refine}. 

\section{The lower bounds} 
\label{sec:constructions}

The goal of this section is to verify the lower bounds on $f_t(\delta)$ 
from Theorem~\ref{thm:even} and Theorem~\ref{thm:odd} by means of explicit 
constructions. To this end, we just need to combine some results from~\cite{B10}
and~\cite{FLZ15}. 

We begin by recapitulating~\cite{B10}*{Theorem~2.1}. This statement deals with 
the set $\Omega$ of all pairs $(d, n)$ of natural numbers for which there exists 
a triangle-free, $d$-regular graph on~$n$ vertices with independence number~$d$. 
Of course, if $(d, n)\in \Omega$, then $\RT(n, d+1, K_3)=\frac 12 dn$ is 
as large as possible. 

A standard blow-up argument shows that if
$(d, n)\in\Omega$, then all multiples of this pair belong to $\Omega$ as well, 
that is we have $(ad, an)\in \Omega$ for all $a\in\NN$. This suggest that rather 
than studying $\Omega$ itself one may want to focus on the set of quotients
\[
	S=\Bigl\{\tfrac dn\colon (d, n)\in\Omega\Bigr\}\,.
\]
Brandt~\cite{B10} discovered constructions which show the following. 

\begin{prop}\label{prop:Brandt}
The set $S\cap \bigl(0, \tfrac13\bigr)$ is dense in $\bigl(0, \tfrac13\bigr)$.
Moreover, $\bigl(0, \tfrac{7}{30}\bigr)\cap\QQ$ and $\bigl(\tfrac14, \tfrac13\bigr)\cap\QQ$
are subsets of $S$.
\end{prop}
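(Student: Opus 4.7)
The plan is to give explicit constructions of extremal graphs, tailored to each rational target. The fundamental closure property of $\Omega$ is invariance under blow-ups: if a triangle-free $d$-regular graph $G$ on $n$ vertices with $\alpha(G)=d$ witnesses $(d,n)\in\Omega$, then its lexicographic blow-up $G[\overline{K_a}]$ is triangle-free, $ad$-regular, has $an$ vertices, and independence number $ad$. Hence $(ad,an)\in\Omega$ for every $a\in\NN$, so $S$ is intrinsically a set of rationals and the denominator may be chosen for convenience.

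For the density assertion I would invoke the Andr\'asfai graphs: the Cayley graph $A_k=\mathrm{Cay}\bigl(\ZZ_{3k-1},\{\pm k,\pm(k+1),\ldots,\pm(2k-1)\}\bigr)$ is triangle-free (because any two generators sum to something outside the generator set), $k$-regular, on $3k-1$ vertices, and a short argument shows $\alpha(A_k)=k$. This yields $k/(3k-1)\in S$ for every $k\ge 1$, a sequence accumulating at $1/3$ from below. Blow-ups of the $A_k$ then provide a dense subset of $(0,1/3)$: writing any $x\in(0,1/3)$ as a limit of $k/(3k-1)$ is not quite enough, but combining blow-ups of $A_k$ with suitable cyclic Cayley constructions at smaller ratios yields rationals dense in every subinterval.

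For the two rational-containment claims $(0,7/30)\cap\QQ\subseteq S$ and $(1/4,1/3)\cap\QQ\subseteq S$, the strategy is parametric. For each target $p/q$ in the relevant interval, I would define a circulant graph on $\ZZ_N$ (for a suitable multiple $N$ of $q$) whose generator set $D\subseteq\ZZ_N$ is symmetric, has size $(p/q)N$, and satisfies $D+D\subseteq\ZZ_N\setminus D$ (which guarantees triangle-freeness and regularity). The two different intervals plausibly require different templates for $D$: in the range $(1/4,1/3)$ Andr\'asfai-like arithmetic progressions of generators suffice, while in $(0,7/30)$ one uses sparser, more flexible generator sets, perhaps on products of cyclic groups, to cover arbitrarily small ratios.

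The main obstacle is bounding the independence number from above by $d$. Regularity, triangle-freeness, and the trivial bound $\alpha\le n-d$ follow immediately from how the Cayley generator set is built, but pushing $\alpha$ all the way down to $d$ requires showing that any subset of $\ZZ_N$ of size $d+1$ must contain two elements differing by an element of $D$. For Andr\'asfai graphs this is a classical counting argument; for Brandt's more intricate families it rests on a case analysis exploiting the arithmetic structure of $D$. Once the relevant parametric families are established with the correct independence numbers, the claimed containments follow by checking that every rational in each of the two intervals admits such a representation, possibly after a blow-up to adjust the denominator.
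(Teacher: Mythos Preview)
The paper does not supply its own proof of this proposition; it attributes the result to Brandt~\cite{B10} and merely quotes it. So there is no argument in the paper to compare your attempt against. That said, your sketch contains a concrete error and is otherwise too vague to stand as a proof.

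The error concerns the Andr\'asfai graphs. For $A_k$ on $3k-1$ vertices the ratio is $k/(3k-1)$, and since $3k>3k-1$ this is strictly \emph{greater} than $\tfrac13$ for every $k\ge 1$. The sequence $k/(3k-1)$ therefore accumulates at $\tfrac13$ from \emph{above}, and none of these ratios lie in $(0,\tfrac13)$ at all. Furthermore, blow-ups preserve the ratio $d/n$ exactly, so no blow-up of any $A_k$ can produce a point of $S\cap(0,\tfrac13)$. Your density argument thus does not get off the ground; the phrase ``combining blow-ups of $A_k$ with suitable cyclic Cayley constructions at smaller ratios'' does not repair this, because no operation is specified and no construction with ratio below $\tfrac13$ has been exhibited.

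For the two rational-containment claims you describe only a template (choose a symmetric sum-free generator set $D\subseteq\ZZ_N$ of the right size and then verify $\alpha=|D|$) without naming any actual families or giving any argument for the independence-number upper bound, which, as you yourself acknowledge, is the heart of the matter. Brandt's paper builds several explicit infinite families and carries out the independence-number analysis case by case; that work is substantial and is not captured by the outline you give.
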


The ``moreover''-part is not going to be used in the sequel and it has been included here
for the readers information only.

\begin{cor}\label{cor:odd-lower}
For fixed $r\ge 1$ and $\delta<\tfrac{1}{3r}$ we have 
\[
	\RT(n, \delta n, K_{2r+1})\ge \bigl(\tfrac{r-1}{r}+\delta-o(1)\bigr)\tfrac{n^2}{2}\,.
\]
\end{cor}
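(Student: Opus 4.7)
The plan is to combine Proposition~\ref{prop:Brandt} with a standard Tur\'an-type blow-up to produce, for each sufficiently large $n$, an $n$-vertex graph $G$ that is $K_{2r+1}$-free, satisfies $\alpha(G)<\delta n$, and realises the edge count claimed in the corollary. Concretely, I would form a balanced complete $r$-partite graph of order $n$ and replace each of its vertex classes by a dense triangle-free graph on $n/r$ vertices with a tightly controlled independence number.

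Since $\delta<\tfrac{1}{3r}$ gives $\delta r<\tfrac13$, Proposition~\ref{prop:Brandt} lets me pick, for every $\eta>0$, a pair $(d,m)\in\Omega$ with $\delta r-\eta\le d/m<\delta r$; denote by $H$ an associated triangle-free, $d$-regular graph on $m$ vertices with $\alpha(H)=d$. Blowing up each vertex of $H$ by a factor of roughly $n/(rm)$ produces a triangle-free graph $H'$ on $n/r$ vertices with $\alpha(H')=(d/m)\cdot n/r+O(1)$ and $e(H')=(d/m)\cdot n^2/(2r^2)+o(n^2)$. I would then let $G$ be the complete $r$-partite graph whose $r$ parts are disjoint copies of $H'$.

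Because every part of $G$ is triangle-free, any clique in $G$ contains at most two vertices per part, so $G$ is $K_{2r+1}$-free. Because $G$ is complete multipartite between distinct parts, every independent set of $G$ lies inside a single part, giving $\alpha(G)=\alpha(H')<\delta n$ for all sufficiently large $n$ (the slack coming from $d/m<\delta r$ absorbs the $O(1)$ blow-up error). Counting edges yields
\[
	e(G)=\binom{r}{2}\Bigl(\tfrac{n}{r}\Bigr)^{\!2}+r\cdot e(H')\ge \Bigl(\tfrac{r-1}{r}+\delta-\tfrac{\eta}{r}\Bigr)\tfrac{n^2}{2}-o(n^2),
\]
and since $\eta>0$ was arbitrary, the claim follows. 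I do not expect any genuine obstacle here: the only technicalities are the integer-rounding errors in the blow-up (absorbed into the $o(n^2)$ term) and guaranteeing the strict inequality $\alpha(G)<\delta n$ (ensured by picking $d/m$ strictly below $\delta r$).
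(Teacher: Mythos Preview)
Your proposal is correct and follows essentially the same approach as the paper: both use Proposition~\ref{prop:Brandt} to obtain a triangle-free regular graph $H$ with $\alpha(H)$ equal to its degree and ratio just below $r\delta$, place $r$ blown-up copies of $H$ into the parts of a complete $r$-partite graph, and verify the $K_{2r+1}$-freeness, independence, and edge-count conditions. The only cosmetic difference is in handling divisibility: the paper restricts to $n$ of the form $arn_*$ and pads with isolated vertices for general $n$, whereas you absorb the rounding directly into the $o(n^2)$ term via a possibly slightly unbalanced blow-up.
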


\begin{proof}
	Let $\eta>0$ be given. We need to show that 
	$\RT(n, \delta n, K_{2r+1})\ge \bigl(\tfrac{r-1}{r}+\delta-\eta\bigr)\tfrac{n^2}{2}$
	holds for all sufficiently large integers $n$. By Proposition~\ref{prop:Brandt}
	there exists a pair 
	$(d_*, n_*)\in\Omega$ such that 
	$\frac{d_*}{n_*}\in \bigl(r(\delta-\eta), r\delta\bigr)$.
	Now it suffices to show that 
		\begin{equation}\label{eq:arn}
		\RT(arn_*, ad_*+1, K_{2r+1})\ge \left(\frac{r-1}{r}
			+\frac{d_*}{rn_*}\right)\frac{(arn_*)^2}2
	\end{equation}
		holds for every~$a\in\NN$. This is because for sufficiently large $n$ we can add
	at most $rn_*$ isolated vertices to a graph establishing~\eqref{eq:arn},
	thus obtaining the desired lower bound on $\RT(n, \delta n, K_{2r+1})$.
	
	To prove~\eqref{eq:arn} we use $(ad_*, an_*)\in\Omega$ and take a triangle-free,
	$(ad_*)$-regular graph $H$ on~$an_*$ vertices with $\alpha(H)=ad_*$.
	Now let $V=V_1\dcup \ldots\dcup V_r$ be a disjoint union of $r$ vertex classes 
	each of which has size $an_*$, and construct a graph $G$ on $V$ 
	\begin{enumerate}
	\item[$\bullet$] inducing on each vertex class $V_i$ a graph isomorphic to $H$,
	\item[$\bullet$] in which any two vertices from different classes are adjacent.  
	\end{enumerate}  
	
	From $K_3\not\subseteq H$ and the box principle it follows that $K_{2r+1}\not\subseteq G$.
	Every subset of $V$ which is independent in $G$ needs to be contained in a single
	vertex class, whence 
		\[
		\alpha(G)=\alpha(H)<ad_*+1\,.
	\]
		Finally, we have  
		\[
		e(G)=\binom{r}{2}(an_*)^2+re(H)=\left(\frac{r-1}{r}
			+\frac{d_*}{rn_*}\right)\frac{(arn_*)^2}2\,. 
	\]
		Therefore, $G$ has all the properties necessary for witnessing~\eqref{eq:arn}. 
\end{proof}

Let us proceed with essentially extremal examples for even cliques. 
As mentioned in the introduction, Bollob\'as and Erd\H{o}s~\cite{BE76} found a geometric  
construction showing that $\RT(n, o(n), K_4)\ge \bigl(\frac 14+o(1)\bigr)\frac{n^2}{2}$. The 
vertex set of their graph splits into two subsets of size~$\frac n2$ inducing 
triangle-free graphs with $o(n^2)$ edges. Between those sets, called 
$A$ and~$B$ from now on, there is a very special quasirandom bipartite graph 
of density $\frac12-o(1)$. 

To aid the readers orientation we remark that the graphs induced by $A$ and $B$ 
are not only triangle-free. As a matter of fact, they are ``locally bipartite'' 
in the sense of having rather large odd-girth. In particular, they do not contain cycles of 
length $5$ or $7$. Such properties will also play an important r\^{o}le in our 
proof of the upper bound (see Fact~\ref{fact:C7} below).

It is not entirely straightforward to make the asymptotic expressions in the result
of Bollob\'as and Erd\H{o}s explicit. The best quantitative analysis we are 
aware of has been conducted by Fox, Loh, and Zhao~\cite{FLZ15}*{Corollary~8.9},
who obtained the following.  
 
\begin{thm}\label{thm:FLZ1}
	If $n$ is sufficiently large and $\xi=4(\log\log n)^{3/2}/(\log n)^{1/2}$,
	then 
		\[
		\RT(n, \xi n, K_4)\ge \bigl(\tfrac18-\xi\bigr)n^2\,.
	\]
	\end{thm}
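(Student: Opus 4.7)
\medskip

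The plan is to make the classical Bollob\'as--Erd\H{o}s sphere construction quantitatively explicit, following the Fox--Loh--Zhao approach. Fix a large dimension $d=d(n)$ (to be tuned) and work on the unit sphere $S^{d-1}\subset\RR^d$ equipped with its uniform probability measure~$\mu$. Pick $n/2$ points $A=\{a_1,\dots,a_{n/2}\}$ and $n/2$ points $B=\{b_1,\dots,b_{n/2}\}$ i.i.d.\ from~$\mu$ (or via a deterministic well-distributed net). Fix a small parameter $\epsilon>0$ and build a graph $G$ on $A\dcup B$ by the rules
\[
	a_i\sim a_j\iff \|a_i-a_j\|<\epsilon,\qquad b_i\sim b_j\iff \|b_i-b_j\|<\epsilon,
\]
\[
	a_i\sim b_j\iff \|a_i-b_j\|>\sqrt 2-\epsilon.
\]
The first verification is that $G$ is $K_4$-free: the in-side edges alone forbid a triangle in $A$ or $B$ (three points within distance $\epsilon$ of each other would have pairwise inner product near~$1$, incompatible with any of them being joined to a common fourth point by a cross-edge of length $\approx\sqrt 2$), and a careful spherical geometry calculation rules out the $(2,2)$-split, because if $a_1,a_2$ are $\epsilon$-close and $b_1,b_2$ are $\epsilon$-close then all four inner products $\langle a_i,b_j\rangle$ cannot simultaneously be $\le \epsilon'$ unless $\epsilon'\gg\epsilon$.

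The next step is to count cross-edges. By the standard formula for the measure of a spherical cap, the probability that two uniform points on $S^{d-1}$ satisfy $\|a-b\|>\sqrt 2-\epsilon$ equals $\tfrac12-O(\epsilon\sqrt d)$, so for $\epsilon\sqrt d$ small we obtain roughly $\bigl(\tfrac12-O(\epsilon\sqrt d)\bigr)(n/2)^2$ cross-edges. Concentration (a Chernoff bound combined with L\'evy's isoperimetric inequality on $S^{d-1}$) turns this into a deterministic lower bound of $\bigl(\tfrac18-O(\epsilon\sqrt d)\bigr)n^2$ cross-edges with overwhelming probability; the in-side edges contribute only $O(1)$ per vertex when $\epsilon^{d-1}\cdot n$ is small, hence a negligible number of total edges.

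The main obstacle, and the reason the exponent $(\log\log n)^{3/2}/(\log n)^{1/2}$ arises, is bounding $\alpha(G)$. An independent set $I$ consists of $I_A\subseteq A$ and $I_B\subseteq B$ such that (i) no two points of $I_A$ lie within distance $\epsilon$ (so $I_A$ is an $\epsilon$-separated set on $S^{d-1}$, forcing $|I_A|\le \epsilon^{-(d-1)}$ by volume, \emph{too weak}), and more usefully, (ii) every $a\in I_A$ and $b\in I_B$ satisfies $\|a-b\|\le\sqrt 2-\epsilon$, i.e., $\langle a,b\rangle\ge \epsilon\sqrt 2-\epsilon^2/2$. The second constraint forces $I_A$ and $I_B$ to lie in dual spherical caps, whose measures decay like $e^{-\Omega(\epsilon^2 d)}$ by L\'evy's lemma. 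Combining this cap-concentration with a union bound over an $\epsilon$-net of possible cap centres (of size $(C/\epsilon)^d$), one concludes $\alpha(G)\le n\cdot e^{-c\epsilon^2 d}\cdot (C/\epsilon)^d$ with high probability. Choosing $d=\Theta(\log n/\log\log n)$ and $\epsilon=\Theta((\log\log n)^{3/2}/(\log n)^{1/2})$ balances these competing terms and yields $\alpha(G)\le\xi n$ with $\xi$ of the stated form, while simultaneously keeping $\epsilon\sqrt d\le\xi$ so that the cross-edge loss fits into the claimed $(\tfrac18-\xi)n^2$ edge bound. The rest is bookkeeping: a random (or derandomised) choice of $A,B$ succeeds with positive probability, producing the required graph.
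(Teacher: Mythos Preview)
This statement is not proved in the paper; it is quoted as \cite{FLZ15}*{Corollary~8.9} and used as a black box in Section~\ref{sec:constructions}. There is therefore nothing here to compare your sketch against beyond the citation, and in broad outline your plan --- a quantitative analysis of the Bollob\'as--Erd\H{o}s sphere construction --- is indeed what Fox, Loh and Zhao carry out.

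The sketch itself, however, has genuine gaps. The $K_4$-freeness argument is wrong as written: with your rules, place $a_1,a_2$ within $\epsilon$ of the north pole and $b_1,b_2$ within $\epsilon$ of the south pole; then all four cross-distances are close to~$2>\sqrt 2-\epsilon$ and you have produced a $K_4$. Your asserted geometric fact --- that the four inner products $\langle a_i,b_j\rangle$ cannot all lie below a small threshold when $a_1\approx a_2$ and $b_1\approx b_2$ --- is simply false (take $b_1=b_2$ orthogonal or antipodal to $a_1=a_2$). The actual Bollob\'as--Erd\H{o}s thresholds and the accompanying geometric lemmas are more delicate, and getting them right is the nontrivial core of~\cite{BE76} and~\cite{FLZ15}. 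Separately, your parameter balance does not close: with $d=\Theta(\log n/\log\log n)$ and $\epsilon=\Theta(\xi)$ one computes
\[
\epsilon\sqrt d=\Theta\Bigl(\tfrac{(\log\log n)^{3/2}}{(\log n)^{1/2}}\cdot\tfrac{(\log n)^{1/2}}{(\log\log n)^{1/2}}\Bigr)=\Theta(\log\log n)\longrightarrow\infty,
\]
contradicting your claim that $\epsilon\sqrt d\le\xi$; the cap-measure deviation from $\tfrac12$ is then of order~$1$, not $\xi$. Finally, you discard the packing bound $|I_A|\le(C/\epsilon)^{d-1}$ as ``too weak'' and replace it by a dual-cap/L\'evy argument, but condition~(ii) does not force $I_A$ and $I_B$ into opposite caps (let $I_A$ be a singleton), and in fact the elementary packing bound on each side --- an independent set is $\epsilon$-separated within $A$ and within $B$, the cross-condition playing no role --- is exactly the mechanism that controls $\alpha(G)$ here and is what drives the form of~$\xi$.
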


Let us proceed with a discussion of~\cite{FLZ15}*{Theorem~1.7} and the remark thereafter.  
Suppose that $\delta\in\bigl(0, \tfrac 12\bigr)$ is fixed and that $n$ is a sufficiently 
large and (just for transparency) even natural number. 
Let $G$ be a graph on $n$ vertices as obtained by Theorem~\ref{thm:FLZ1}.
Recall that there is a partition $V(G)=A\dcup B$ with $|A|=|B|=\frac n2$ of its vertex set
into two subsets not inducing triangles. Let $X\subseteq A$ and $Y\subseteq B$ be
two random sets of size $|X|=|Y|=(\delta-\xi)n$, and let $G_*$ be the graph obtained 
from $G$ by removing all edges incident with $X\cup Y$ and then adding all edges 
from $X$ to $B$ as well as all edges from $Y$ to~$A$. Surely, $G_*$ is $K_4$-free 
and all its independent sets have size less than $\delta n$. Moreover, a short 
calculation displayed in the proof of~\cite{FLZ15}*{Lemma~9.1} shows that the 
expected number of edges of $G_*$ is at least 
$\bigl(\tfrac14+\delta-\delta^2-o(1)\bigr)\tfrac{n^2}2$. Therefore, we have indeed 
$f_4(\delta)\ge \tfrac14+\delta-\delta^2$. 

This construction combines with~\cite{EHSS}*{Theorem~5.4} in the following way.

\begin{prop}\label{prop:24}
	If $r\ge 2$ and $\delta\in\bigl(0, \tfrac{1}{3r-2}\bigr)$ are fixed, 
	then 
		\[
		\RT(n, \delta n, K_{2r})
			\ge \bigl(\tfrac{3r-5}{3r-2}+\delta-\delta^2-o(1)\bigr)\tfrac{n^2}2\,.
	\]
	\end{prop}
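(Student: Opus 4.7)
The plan is to prove the proposition by constructing an explicit extremal graph~$G$ on~$n$ vertices, combining three ingredients: Brandt's triangle-free constructions (Proposition~\ref{prop:Brandt}), the Fox--Loh--Zhao $K_4$-free modification of the Bollob\'as--Erd\H{o}s graph described in the paragraph following Theorem~\ref{thm:FLZ1}, and a complete multipartite join in the spirit of~\cite{EHSS}*{Theorem~5.4}.

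Concretely, I would partition $V(G) = V_1\dcup \cdots \dcup V_{r-2}\dcup W$ with $|V_i|\approx 3n/(3r-2)$ and $|W|\approx 4n/(3r-2)$, and take $G$ to be complete multipartite with respect to this partition. Inside each~$V_i$ I insert a Brandt-type triangle-free graph of independence less than $\delta n$ and density close to $\delta_1:=\delta(3r-2)/3$; since $\delta_1<\tfrac13$ by hypothesis, such a graph is furnished by Proposition~\ref{prop:Brandt} together with the blow-up trick used in the proof of Corollary~\ref{cor:odd-lower}. Inside~$W$ I insert the Fox--Loh--Zhao graph with parameter $\delta_2:=\delta(3r-2)/4$; this graph is $K_4$-free, has independence less than $\delta_2|W|=\delta n$, and has density at least $\tfrac14+\delta_2-\delta_2^2-o(1)$.

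Three properties then need verification. Since each~$V_i$ is triangle-free and~$W$ is $K_4$-free, the clique number of~$G$ is at most $2(r-2)+3=2r-1$, so $G$ is $K_{2r}$-free. Every independent set of $G$ is confined to a single~$V_i$ or to~$W$ because all inter-part pairs are edges, and each such set has size less than $\delta n$ by construction. The edge count is then a routine sum, and the key arithmetic observation is that the two $\delta$-contributions, namely $3(r-2)\delta/(3r-2)$ from the $V_i$-interiors and $4\delta/(3r-2)$ from the $W$-interior, combine via the identity $[3(r-2)+4]/(3r-2)=1$ to give an overall linear term of~$\delta$, while the $-\delta^2$ term is provided entirely by the Fox--Loh--Zhao piece.

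The main obstacle, though essentially arithmetic, lies in the choice of the sub-parameters $\delta_1$ and $\delta_2$: they must be simultaneously large enough to saturate the independence bound $\delta n$ on each interior (so that no independence slack is wasted), small enough that Proposition~\ref{prop:Brandt} and the Fox--Loh--Zhao construction both apply (which is why $\delta<1/(3r-2)$ is needed), and exactly such that the linear $\delta$-gains telescope to~$\delta$. The prescribed sizes $|V_i|=3n/(3r-2)$ and $|W|=4n/(3r-2)$ are the unique balanced choice that achieves all of this while simultaneously producing the base density~$(3r-5)/(3r-2)$.
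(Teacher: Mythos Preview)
Your proposal is correct and follows essentially the same construction as the paper: the paper partitions $V(G)=V_1\dcup\ldots\dcup V_r$ with $|V_1|=|V_2|=\tfrac{2n}{3r-2}$ and $|V_i|=\tfrac{3n}{3r-2}$ for $i\ge 3$, places the Fox--Loh--Zhao $K_4$-free graph on $V_1\dcup V_2$ (your~$W$) and Brandt-type triangle-free graphs on each $V_i$ with $i\ge 3$ (your $V_1,\ldots,V_{r-2}$), and makes all remaining cross-pairs complete. The only difference is that the paper keeps the two halves of the Bollob\'as--Erd\H{o}s piece as separate classes $V_1$, $V_2$ rather than a single block~$W$; the parameter choices, the verification of $K_{2r}$-freeness and $\alpha(G)<\delta n$, and the edge count via $[3(r-2)+4]/(3r-2)=1$ are identical.
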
   

\begin{proof} 
	Let $n$ be sufficiently large and, without loss of generality, divisible by $3r-2$.
	Take a set $V$ of $n$ vertices as well as a partition 
		\begin{equation}\label{eq:Vi}
		V=V_1\dcup V_2\dcup\ldots\dcup V_r
	\end{equation}
		with $|V_i|=\frac{2}{3r-2}n$ for $i=1, 2$ and $|V_i|=\frac{3}{3r-2}n$ for $i=3, \ldots, r$.
	Construct a graph $G$ on~$V$ whose edges are as follows.
	\begin{enumerate}
		\item[$\bullet$] The subgraph of $G$ induced by $V_1\dcup V_2$ 
			is the graph described above exemplifying the lower bound 
						\[
				\RT\bigl(\tfrac 4{3r-2}n, \delta n, K_4\bigr)\ge 
				\tfrac{2}{(3r-2)^2}n^2+\tfrac2{3r-2}\delta n^2-\tfrac12\delta^2n^2-o(n^2)\,,
			\]
						the sets $V_1$ and $V_2$ here playing the r\^{o}les of $A$ and $B$ there.   
		\item[$\bullet$] For $i\in [3, r]$ the graph that $G$ induces on $V_i$ is obtained 
			by Corollary~\ref{cor:odd-lower} and demonstrates
						\[
				\RT\bigl(\tfrac 3{3r-2}n, \delta n, K_3\bigr)\ge \tfrac{3}{2(3r-2)}\delta n^2
					-o(n^2)\,.
			\]
					\item[$\bullet$] If $1\le i<j\le r$ and $(i, j)\ne (1, 2)$, then all pairs 
			$uv$ with $u\in V_i$ and $v\in V_j$ are edges of $G$.
	\end{enumerate} 
	
	Evidently, every clique in $G$ can have at most three vertices in $V_1\dcup V_2$
	and at most two vertices in each $V_i$ with $i\in [3, r]$, which proves that $G$ 
	is $K_{2r}$-free. Moreover, each independent subset of $V$ is either contained 
	in $V_1\dcup V_2$ or in one of the sets $V_i$ with $i\in [3, r]$. Consequently, 
	we have $\alpha(G)<\delta n$. Finally, a quick computation shows 
		\begin{align*}
		2e(G) &= \bigl[ \bigl( \tfrac{4}{(3r-2)^2} +\tfrac{4}{3r-2}\delta-\delta^2 \bigr)  
			+\tfrac{3(r-2)}{3r-2}\delta+\tfrac{9(r-2)(r-3)+24(r-2)}{(3r-2)^2}-o(1)\bigr]n^2 \\
			&= \bigl( \tfrac{3r-5}{3r-2}+\delta-\delta^2-o(1)\bigr)n^2\,.
	\end{align*}
		So altogether $G$ has all required properties. 
\end{proof}

\section{Odd cliques} \label{sec:odd}

\subsection{Overview} 
This section deals with the proof of Theorem~\ref{thm:odd}. As the lower bound has already 
been established in Corollary~\ref{cor:odd-lower}, it will suffice to prove the following result.

\begin{thm} \label{thm:rt-odd}
	Suppose that $r$ is a positive integer and $0<\delta<\frac{1}{289r^5}$. If $G$
	is a $K_{2r+1}$-free graph with $n$ vertices 
	and $\alpha(G)<\delta n$, then $e(G)\le \bigl(\frac{r-1}{r}+\delta\bigr)\frac{n^2}{2}$. 
\end{thm}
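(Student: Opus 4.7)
The plan is to reduce Theorem~\ref{thm:rt-odd} to the following \emph{decomposition lemma}: every $K_{2r+1}$-free graph $G$ on $n$ vertices with $\alpha(G) < \delta n$ admits a vertex partition $V(G) = V_1 \sqcup \dots \sqcup V_r$ such that each induced subgraph $G[V_i]$ is triangle-free. The case $r = 1$ of the lemma is trivial, as $G$ is itself triangle-free.

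Given such a decomposition, the edge bound follows from a short calculation. Since $G[V_i]$ is triangle-free, every neighborhood inside $V_i$ is independent in $G$ and thus has fewer than $\delta n$ vertices, so $e(G[V_i]) \leq \tfrac12 |V_i|\alpha(G[V_i]) < \tfrac12 |V_i|\,\delta n$. Between parts, Cauchy--Schwarz gives $\sum_i |V_i|^2 \geq n^2/r$, so
\[
\sum_{i<j}|V_i|\,|V_j| = \tfrac12\Bigl(n^2 - \sum_i |V_i|^2\Bigr) \leq \tfrac{(r-1)n^2}{2r}.
\]
Summing the two contributions yields
\[
e(G) \leq \tfrac{(r-1)n^2}{2r} + \tfrac{\delta n^2}{2} = \bigl(\tfrac{r-1}{r}+\delta\bigr)\tfrac{n^2}{2},
\]
matching the lower bound of Corollary~\ref{cor:odd-lower} exactly and with equality characterising the extremal configuration.

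To establish the decomposition lemma by induction on $r$, I would isolate one triangle-free piece $V_r$ of size roughly $n/r$ whose removal leaves a $K_{2r-1}$-free residual graph, which can then be partitioned into $r-1$ triangle-free parts by the inductive hypothesis. A natural candidate is $V_r := N_G(K)$, the common neighborhood in $G$ of a carefully chosen clique $K$ of size $2r-2$: by the clique-extension principle, any triangle in $N_G(K)$ would combine with $K$ into a $K_{2r+1}$ in $G$, so $G[N_G(K)]$ is automatically triangle-free. The clique $K$ must be engineered so that simultaneously (i) $|N_G(K)| \gtrsim n/r$ and (ii) the residual $V(G)\setminus N_G(K)$ is $K_{2r-1}$-free.

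The main obstacle is arranging (i) and (ii) simultaneously, since neither is automatic for an arbitrary $(2r-2)$-clique. I would anticipate handling this by an iterative peeling procedure that greedily selects the clique maximising its common neighborhood, combined with a density-increment argument exploiting both the edge-density lower bound implicit in the contrapositive and the constraint $\alpha(G) < \delta n$ to drive the residual into a nearly-extremal $K_{2r-1}$-free regime at each stage. The constant $\tfrac{1}{289 r^5}$ in the hypothesis on $\delta$ is calibrated to absorb the polynomial-in-$r$ losses that accumulate across the $O(r)$ levels of this recursive peeling, so that the relative independence density $\delta n/|V \setminus V_r|$ stays within the range needed to invoke the lemma at the next level.
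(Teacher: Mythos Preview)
Your deduction of the edge bound from the decomposition lemma is clean and correct, but the decomposition lemma itself is neither proved nor, as far as I can see, known, and your sketch does not close the gap. The hard step is condition~(ii): nothing forces $V\setminus N_G(K)$ to be $K_{2r-1}$-free for \emph{any} $(2r-2)$-clique $K$. For a concrete obstruction, let $G$ be the disjoint union of two copies of the construction from Corollary~\ref{cor:odd-lower} on $n/2$ vertices each; then $G$ is $K_{2r+1}$-free with $\alpha(G)<\delta n$, every $(2r-2)$-clique $K$ lies in a single component, and the other component---which contains many $K_{2r-1}$'s---survives intact in $V\setminus N_G(K)$. Your density-increment remark cannot rescue this, because the decomposition lemma as you state it carries no edge-density hypothesis; and if you add one, the residual $G[V\setminus V_r]$ at each inductive step need not inherit it. The phrases ``iterative peeling'' and ``density increment'' name the obstacle rather than overcome it, so what you have is a plan, not a proof.

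The paper bypasses vertex partitioning entirely. It first adds the hypothesis $\delta(G)\ge\frac{r-1}{r}n$ (removed at the end by the standard vertex-deletion and blow-up tricks) and then decomposes the \emph{edge set}: an edge $xy$ is coloured red if $|N(x)\cap N(y)|$ lies below an explicit threshold and green otherwise. A short double-counting argument (Corollary~\ref{cor:red}) shows the red subgraph is $K_{r+1}$-free, so Tur\'an bounds it by $\frac{r-1}{2r}n^2$ edges; the structural Lemma~\ref{lem:max} then shows that a vertex with $\delta n$ green neighbours would force a $K_{2r+1}$, so the green subgraph contributes at most $\frac{\delta n^2}{2}$ edges. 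Summing gives the bound directly, with no recursion and no need to know whether a triangle-free vertex partition exists.
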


Before coming to the details we would like to give an informal description of the main idea
occurring in the proof of Theorem~\ref{thm:rt-odd}. 
First of all, it suffices to prove this result, for a somewhat larger range of $\delta$, under
the minimum degree assumption $\delta(G)\ge \frac{r-1}{r}$, for then standard arguments 
allow us to infer the general statement (see Proposition~\ref{prop-odd-mindeg} below).
Next it can be proved in a rather precise sense that graphs fulfilling this minimum degree 
condition and the other assumptions of Theorem~\ref{thm:rt-odd} need to look almost like the 
graphs presented in the proof of Corollary~\ref{cor:odd-lower}. In particular, the edges of 
such graphs can be coloured {\it red} and {\it green} in such a way that 
\begin{enumerate}
	\item[\tikz{\shade[ball color = red] (0,0) circle (3pt)}]
		the red graph is $K_{r+1}$-free
	\item[\tikz{\shade[ball color = green] (0,0) circle (3pt)}]
		and the green graph has maximum degree $\delta n$.
\end{enumerate} 

In the extremal construction, the red graph was actually an $r$-partite Tur\'an graph, 
while the green graph was the disjoint union of $r$ triangle-free graphs each of which 
had $n/r$ vertices. Applying Tur\'an's theorem to the red part and the inequality 
$e(G)\le \Delta(G)v(G)/2$ to the green part one checks easily that every graph
admitting an edge colouring with the two properties above has at 
most $\bigl(\frac{r-1}{r}+\delta\bigr)\frac{n^2}{2}$ edges.   

We are thus left with the task of colouring the edges of every graph $G$ as in 
Theorem~\ref{thm:rt-odd} and having large minimum degree in the the desired way. 
Now in the extremal case the joint neighbourhood of a red edge has size 
$\bigl(\frac{r-2}{r}+2\delta)n$,
which is considerably less than the corresponding value of about $\frac{r-1}{r}n$ for 
green edges. For the general case this suggests to {\it define} an edge to be red 
if its joint neighbourhood is ``small'' and green otherwise, and in fact this is what we 
shall do later in the proof of Proposition~\ref{prop-odd-mindeg}.
 
\subsection{Preparations}
We begin with a result saying that among any $r+1$ large-degree vertices in a graph there 
is a always a pair whose joint neighbourhood is ``large.'' This will be used later for 
excluding red cliques of order $r+1$.  

\begin{lemma}
	Given a graph $G=(V, E)$ on $n$ vertices and a set $Q\subseteq V$ with $|Q|=r+1\ge 2$,
	there exist distinct $x, y\in Q$ with 
	$|N(x)\cap N(y)|\ge \frac{r-1}{r}\bigl(d(x)+d(y)\bigr)-\frac{r-1}{r+1}n$.	 
\end{lemma}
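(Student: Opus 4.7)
The plan is to proceed by contradiction via a double-counting argument. Suppose that
\[
    |N(x) \cap N(y)| < \tfrac{r-1}{r}\bigl(d(x)+d(y)\bigr) - \tfrac{r-1}{r+1}n
\]
for every pair of distinct $x, y \in Q$; set $D = \sum_{x \in Q} d(x)$, and for each $v \in V$ write $d_Q(v) = |N(v) \cap Q|$. Summing the assumed inequality over all $\binom{r+1}{2}$ pairs---each $d(x)$ appearing in exactly $r$ of them---and rewriting the left-hand side via the identity $\sum_{\{x,y\} \subseteq Q} |N(x) \cap N(y)| = \sum_{v \in V} \binom{d_Q(v)}{2}$, I would arrive after routine simplification at
\[
    \sum_{v \in V} d_Q(v)^2 < (2r-1) D - r(r-1) n. \qquad (\star)
\]

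To derive a contradiction I would then establish a matching lower bound for the left-hand side of $(\star)$. I expect the main obstacle to be that the naive Cauchy--Schwarz estimate $\sum_v d_Q(v)^2 \ge D^2/n$ is just barely too weak precisely in the regime $(r-1)n < D < rn$, which is exactly when the desired conclusion is not already trivially true. The way around this is to exploit the integrality of $d_Q(v)$: since $d_Q(v)$ is a non-negative integer, we have either $d_Q(v) \le r-1$ or $d_Q(v) \ge r$, and hence $(d_Q(v) - (r-1))(d_Q(v) - r) \ge 0$. Expanding yields the pointwise estimate $d_Q(v)^2 \ge (2r-1)\,d_Q(v) - r(r-1)$, and summing it over $v \in V$ gives $\sum_v d_Q(v)^2 \ge (2r-1)D - r(r-1)n$, contradicting $(\star)$ and completing the argument.
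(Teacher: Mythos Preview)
Your proof is correct and is essentially the same argument as the paper's, just recast as a contradiction: your pointwise inequality $(d_Q(v)-(r-1))(d_Q(v)-r)\ge 0$ is algebraically identical to the paper's key estimate $k(r-1)-\binom{r}{2}\le\binom{k}{2}$ (which the paper obtains via $k(r-1)-\binom{r}{2}=\binom{k}{2}-\binom{r-k}{2}$), and both of you then sum over $v\in V$ and compare with the sum over pairs in $Q$. The only cosmetic difference is that the paper proves the averaged inequality directly and invokes pigeonhole, whereas you assume failure for every pair and derive a contradiction.
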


\begin{proof}
	Notice that for every integer $k$ with $0\le k\le r+1$ we have 
	\[
		k(r-1)-\tbinom{r}{2}=\tbinom{k}{2}-\tbinom{r-k}{2}\le \tbinom{k}{2}\,.
	\]

	Thus writing $Q^{(2)}$ for the collection of all two-element subsets of $Q$ and $W_k$
	for the set of all vertices in $V$ with exactly $k$ neighbours in $Q$ we have 
	\begin{align*}
		& \sum_{xy\in Q^{(2)}} \left(\tfrac{r-1}{r}\bigl(d(x)+d(y)\bigr)-\tfrac{r-1}{r+1}n\right)
			=(r-1)\sum_{x\in Q}d(x)-\tbinom{r}{2}n \\
		&=\sum_{k=0}^{r+1} \left(k(r-1)-\tbinom{r}{2}\right) |Q_k|
		\le \sum_{k=0}^{r+1} \tbinom{k}{2} |Q_k|
		=\sum_{xy\in Q^{(2)}} |N(x)\cap N(y)|\,,
	\end{align*}
	from which the desired result follows immediately.  
\end{proof}

In view of Tur\'an's theorem, this has the following consequence. 

\begin{cor}\label{cor:red}
	If $G$ is a graph on $n$ vertices, then for every positive integer $r$ there are
	at most $\frac{r-1}{2r}n^2$ edges $xy\in E(G)$ 
	with $|N(x)\cap N(y)|<\frac{r-1}{r}\bigl(d(x)+d(y)\bigr)-\frac{r-1}{r+1}n$. \hfill $\Box$
\end{cor}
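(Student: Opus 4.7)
The plan is to read Corollary~\ref{cor:red} as a direct consequence of the preceding lemma together with Tur\'an's theorem. Call an edge $xy\in E(G)$ \emph{deficient} if it satisfies the strict inequality $|N(x)\cap N(y)|<\frac{r-1}{r}\bigl(d(x)+d(y)\bigr)-\frac{r-1}{r+1}n$, and let $R$ denote the spanning subgraph of $G$ on $V(G)$ whose edges are exactly the deficient ones.

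First I would show that $R$ contains no $K_{r+1}$. Indeed, suppose some $(r+1)$-set $Q\subseteq V$ spans a complete subgraph in $R$; then every pair $xy\in Q^{(2)}$ is an edge of $G$ satisfying the strict inequality defining deficiency. But this directly contradicts the previous lemma, which guarantees the existence of at least one pair $xy\in Q^{(2)}$ with $|N(x)\cap N(y)|\ge \frac{r-1}{r}\bigl(d(x)+d(y)\bigr)-\frac{r-1}{r+1}n$. (Note that the lemma gives distinct $x,y$ but does not require them to be adjacent in $G$; this is fine because the deficiency condition is applied only to pairs that are edges of $G$, so we only need the lemma to rule out the configuration where \emph{all} pairs from $Q$ are deficient edges, and any non-deficient pair, adjacent or not, contradicts that.) Hence $R$ is $K_{r+1}$-free.

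Now I would invoke Tur\'an's theorem: any $K_{r+1}$-free graph on $n$ vertices has at most $\bigl(1-\frac1r\bigr)\frac{n^2}{2}=\frac{r-1}{2r}n^2$ edges. Applying this to $R$ gives the desired bound on the number of deficient edges of $G$, which is exactly the content of Corollary~\ref{cor:red}.

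There is no real obstacle to speak of; the entire argument is simply identifying the deficient edges as a $K_{r+1}$-free subgraph and quoting Tur\'an. The only mildly subtle point is the logical passage from the lemma (which supplies a non-deficient pair inside any $(r+1)$-set of vertices, irrespective of edges) to the non-existence of a deficient $K_{r+1}$, but that is immediate since the deficiency condition refers to edges of $G$ in the first place.
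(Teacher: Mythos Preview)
Your proposal is correct and is exactly the argument the paper has in mind: the preceding lemma forbids a $K_{r+1}$ consisting entirely of deficient edges, and Tur\'an's theorem then bounds their number by $\frac{r-1}{2r}n^2$. The parenthetical about adjacency is unnecessary, since if $Q$ spans a clique in $R$ then every pair from $Q$ is already an edge of $G$, but it does no harm.
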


The next lemma collects some facts about edge-maximal $K_{2r+1}$-free graphs with large 
minimum degree and small independence number. 

\begin{lemma}\label{lem:max}
	Let $r\ge 2$ and $0<\delta<\frac{1}{2r}$. Suppose that $G$ is an edge-maximal 
	$K_{2r+1}$-free graph
	on $n$ vertices with $\alpha(G)<\delta n$ and $\delta(G)\ge \frac{r-1}{r}n$.
	\begin{enumerate}[label=\rmlabel]
		\item\label{it:i} We have $\Delta(G) < \bigl(\frac{r-1}{r}+2r\delta\bigr)n$.
		\item\label{it:ii} Every $Q\subseteq V(G)$ with 
			$|Q|\ge \bigl(\frac{2r-3}{2r}+r\delta\bigr)n$ induces a $K_{2r-2}$.
		\item\label{it:iii} If an edge $xy$ of $G$ satisfies $N(x)\cup N(y)\ne V(G)$,
			then
			\[
				|N(x)\cap N(y)|\ge d(x)+d(y)-\bigl(\tfrac{r-1}{r}+8r\delta\bigr)n\,.
			\]
	\end{enumerate}
\end{lemma}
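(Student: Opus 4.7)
I shall prove (ii), (i), and (iii) in that order, since (i) will be obtained by applying (ii) to the neighbourhood of a maximum-degree vertex, and (iii) will combine (i) with the hypothesis of edge-maximality.

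For (ii), I would argue by contradiction: assuming $G[Q]$ is $K_{2r-2}$-free, the minimum-degree hypothesis $\delta(G) \ge \tfrac{r-1}{r}n$ gives $\delta(G[Q]) \ge |Q| - \tfrac{n}{r}$, while $\alpha(G[Q]) < \delta n$. The plan is to build a $K_{2r-2}$ in $G[Q]$ greedily, by picking successive vertices and passing to their common neighbourhood inside $Q$; each such extension loses at most $\tfrac{n}{r}$ vertices, and the threshold $\tfrac{2r-3}{2r}+r\delta$ is precisely calibrated so that enough common neighbourhood persists throughout the argument. Wherever greedy extension falls short, the edge-maximality of $G$ supplies a $K_{2r-1}$ in the common neighbourhood of any two non-adjacent vertices, enlarging the partial clique by more than one vertex at a time; and $\alpha(G)<\delta n$ provides the final edge once the common neighbourhood has been shrunk to size just above $\delta n$. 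I expect this part to be the main obstacle: the balance between the greedy loss $\tfrac{n}{r}$ per step, the independence slack $\delta n$, and the edge-maximality shortcut is tight and has to be arranged carefully.

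For (i), suppose for contradiction that some vertex $v$ satisfies $d(v) \ge \bigl(\tfrac{r-1}{r}+2r\delta\bigr)n$. The identity $\tfrac{r-1}{r}-\tfrac{2r-3}{2r}=\tfrac{1}{2r}$, together with $2r\delta>r\delta$, implies that $Q:=N(v)$ fulfils the hypothesis of (ii), so $N(v)$ contains a clique $K$ of size $2r-2$. Now $\{v\}\cup K$ is a $K_{2r-1}$ whose common neighbourhood in $V(G)$ must be an independent set (lest a $K_{2r+1}$ form), hence has fewer than $\delta n$ vertices. On the other hand, edge-maximality applied to each non-neighbour of $v$ yields a $K_{2r-1}$ inside $N(v)$ which has large intersection with $K$, and combining these with the minimum-degree bound forces the common neighbourhood of $\{v\}\cup K$ to exceed $\delta n$, the required contradiction.

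For (iii), I would introduce the partition $V = C \dcup X \dcup Y \dcup S$ with $C=N(x)\cap N(y)$, $X=N(x)\setminus N(y)$, $Y=N(y)\setminus N(x)$, and $S=V\setminus(N(x)\cup N(y))$. The identity $|C|=d(x)+d(y)-n+|S|$ reduces the target to $|S|\ge\bigl(\tfrac{1}{r}-8r\delta\bigr)n$. The hypothesis supplies $w\in S$; applying edge-maximality to the non-edges $xw$ and $yw$ yields cliques $T_x\subseteq N(x)\cap N(w)$ and $T_y\subseteq N(y)\cap N(w)$ of size $2r-1$, and the $K_{2r+1}$-freeness forces $|T_x\cap N(y)|\le 2r-2$ and $|T_y\cap N(x)|\le 2r-2$, so in particular $T_x$ meets $X$ and $T_y$ meets $Y$. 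Capping $d(w)$ from above via (i) and then tallying how $N(w)$ distributes across $C, X, Y$ using these cliques — together with further $K_{2r-1}$-cliques obtained from other non-edges $wu$ with $u\in X\cup Y$ — should produce the required lower bound on $|S|$, the additive slack $8r\delta n$ (as opposed to the $2r\delta n$ coming directly from (i)) absorbing the error terms that accumulate in this count.
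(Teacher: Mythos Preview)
Your plan has genuine gaps in all three parts.

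For~\ref{it:ii}, the greedy argument does not reach a $K_{2r-2}$: each vertex you add to the partial clique costs $n/r$ from the common neighbourhood in $Q$, so after $2r-3$ steps you would need $|Q|\gtrsim\frac{2r-3}{r}n$, twice what the hypothesis provides. Adding edges two at a time (using $\alpha<\delta n$) does not close the factor-two gap either, as a quick check for $r=3$ already shows. Your proposed rescue via edge-maximality does not help: the $K_{2r-1}$ it produces lives in the common neighbourhood of a non-edge \emph{in $G$}, not in $G[Q]$, so it cannot enlarge a clique inside $Q$. The paper instead takes a \emph{maximum} clique $Z$ of size $s$ in $G[Q]$ and observes that for every $z\in Z$ the common neighbourhood of $Z\setminus\{z\}$ in $Q$ is independent (else $s$ would increase), hence has size $<\delta n$; summing $|N(z)\cap Q|$ over $z\in Z$ then yields $2|Q|<s(\tfrac{1}{r}+\delta)n$, which forces $s\ge 2r-2$. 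The point is that independence is invoked not once at the end but $s$ times, once per deleted vertex, and this is what makes the threshold $\tfrac{2r-3}{2r}+r\delta$ tight.

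For~\ref{it:i}, deriving it from~\ref{it:ii} gives only a $K_{2r-1}$ through $v$, and the degree-sum over a $K_{2r-1}$ is too weak: the common neighbourhoods of its $(2r-2)$-subsets need not be independent, so you cannot bound them by $\delta n$. Your appeal to ``$K_{2r-1}$'s with large intersection with $K$'' is unsubstantiated. What actually works (and what the paper does) is to observe directly from edge-maximality that every vertex lies in a $K_{2r}$; summing degrees over that $K_{2r}$ and using that each $(2r-1)$-subset has independent common neighbourhood gives~\ref{it:i} immediately, without~\ref{it:ii}.

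For~\ref{it:iii}, your sketch stops short of the key estimate. The paper first proves, for any non-edge $vw$, that $|N(v)\setminus N(w)|\le\bigl(\tfrac{r-1}{r}+4r\delta\bigr)n-d(w)$, by analysing how $N(v)$ and $N(w)$ sit relative to the $K_{2r-1}$ in $N(v)\cap N(w)$; then it applies this twice with $z\in V\setminus(N(x)\cup N(y))$ playing the role of $v$, once against $x$ and once against $y$. Your partition $C\dcup X\dcup Y\dcup S$ and the observation that $T_x$ meets $X$ are correct but do not by themselves yield the lower bound on $|S|$; the missing piece is precisely this control on $|N(z)\setminus N(x)|$.
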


\begin{proof}
	Notice that $\delta n>\alpha(G)\ge 1$ and our upper bound on $\delta$ entail $n>2r$. 
	Thus the maximality of $G$ among $K_{2r+1}$-free graphs on $V(G)$ implies that every 
	vertex of $G$ is in a~$K_{2r}$. 
	
	For the proof of~\ref{it:i} we consider an arbitrary vertex $x\in V(G)$ and let 
	$T$ denote the vertex set of a $K_{2r}$ in $G$ containing $x$. For every $t\in T$
	the joint neighbourhood of $T\sm\{t\}$ is an independent set, since otherwise $G$ 
	would contain a $K_{2r+1}$. Consequently, each of these joint neighbourhoods contains 
	fewer than $\delta n$ vertices, whence 
	\[
		\sum_{t\in T}d(t)<(2r-2)n+2r\delta n\,.
	\]
	Taking the minimum degree condition on $G$ into account we deduce 
	$d(x)<(\frac{r-1}{r}+2r\delta\bigr)n$ and, as $x$ was arbitrary,~\ref{it:i} follows. 
	
	For the proof of~\ref{it:ii} we remark that the subgraph of $G$ induced by $Q$ 
	has minimum degree at least $|Q|-\frac nr$. Let $s\ge 2$ be maximal such that this graph 
	contains a $K_s$ and let~$Z$ denote the vertex set of some $K_s$ in $G$. By the same 
	argument as above we obtain
	\[
		s(|Q|-\tfrac nr)\le \sum_{z\in Z}|N(z)\cap Q|< (s-2)|Q|+s\delta n
	\]
	and thus 
	\[
		 \bigl(\tfrac{2r-3}{r}+2r\delta\bigr)n\le 2|Q|<\tfrac sr n+s\delta n\,,
	\]
	which is incompatible with $s\le 2r-3$. In other words, $Q$ contains indeed a $K_{2r-2}$.
	
	Preparing the proof of~\ref{it:iii} we show first that if $v$ and $w$ are distinct
	vertices of $G$ with $vw\not\in E(G)$, then 
	\begin{equation}\label{eq:vw}
		|N(v)\sm N(w)|\le \bigl(\tfrac{r-1}{r}+4r\delta\bigr)n-d(w)\,.
	\end{equation}
	To this end we use the edge-maximality of $G$, which gives us a $K_{2r-1}$ in $G$ 
	whose joint neighbourhood contains $v$ and $w$. Denote the vertex set on some such clique 
	by $A$ and let~$J$ be the set of all those vertices which have at most $2r-3$ neighbours 
	in $A$. Exploiting that the joint neighbourhood of $A$ can contain at most $\delta n$
	vertices we obtain
	\[
		\tfrac{(2r-1)(r-1)}{r}n\le \sum_{a\in A}d(a)\le (2r-3)n+|V(G)\sm J|+\delta n\,,
	\]
	i.e., $|J|\le \bigl(\frac{r-1}{r}+\delta\bigr)n$. Since $A\cup \{v\}$ induces a $K_{2r}$,
	there can be at most $(2r-1)\delta n$ neighbours of $v$ outside $J$. 
	The same argument applies to $w$ as well and thus we have
	\[
		|N(v)\sm J|+|N(w)\sm J|\le (4r-2)\delta n\,.
	\]
	Putting everything together one obtains
	\begin{align*}
		 |N(v)\sm N(w)| &\le |N(v)\sm J|+|J\sm N(w)|\le |N(v)\sm J|+|N(w)\sm J| +|J|-d(w) \\
		 &\le (4r-2)\delta n + \bigl(\tfrac{r-1}{r}+\delta\bigr)n-d(w)\,,
	\end{align*}
	which is slightly stronger than the estimate~\eqref{eq:vw}.
	
	We are now ready to verify~\ref{it:iii}. Let $xy$ denote an arbitrary edge of $G$ and 
	suppose that $N(x)\cup N(y)\ne V(G)$. This means that there exists a further vertex $z$ 
	with $xz, yz\not\in E(G)$ and two applications of~\eqref{eq:vw} reveal
	\begin{align*}
		|N(x)\cap N(y)| &\ge |N(z)|-|N(z)\sm N(x)|-|N(z)\sm N(y)| \\
		&\ge d(x)+d(y)+d(z)-2\bigl(\tfrac{r-1}{r}+4r\delta\bigr)n \\
		& \ge d(x)+d(y)-\bigl(\tfrac{r-1}{r}+8r\delta\bigr)n\,,
	\end{align*}
	as desired.  
\end{proof}

\subsection{Counting edges}

Next we prove a version of our intended result for graphs satisfying a minimum degree 
condition. 
 
\begin{prop} \label{prop-odd-mindeg}
	Suppose that $r$ is a positive integer and $0<\delta<\frac{1}{17r^3}$. If $G$
	is a $K_{2r+1}$-free graph with $n$ vertices, $\delta(G)\ge \frac{r-1}{r}n$, 
	and $\alpha(G)<\delta n$, then $e(G)\le \bigl(\frac{r-1}{r}+\delta\bigr)\frac{n^2}{2}$. 
\end{prop}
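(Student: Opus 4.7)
The plan is to adapt the informal roadmap laid out just before this proposition. First I would replace $G$ by an edge-maximal $K_{2r+1}$-free super-graph on the same vertex set, which only increases $e(G)$ while preserving the minimum-degree and $\alpha$ hypotheses, so that Lemma~\ref{lem:max} becomes available throughout. Then I colour an edge $xy\in E(G)$ \emph{red} when
\[
	|N(x)\cap N(y)| < \tfrac{r-1}{r}\bigl(d(x)+d(y)\bigr) - \tfrac{r-1}{r+1}n
\]
and \emph{green} otherwise; Corollary~\ref{cor:red} immediately bounds the number of red edges by $\tfrac{r-1}{r}\cdot\tfrac{n^2}{2}$, so the entire task reduces to showing that the green graph has maximum degree less than $\delta n$.

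Before bounding the green degree I would establish a useful preliminary: under the smallness assumption on $\delta$, every green edge $xy$ is non-spanning, meaning $N(x)\cup N(y)\ne V(G)$. If $xy$ were spanning, then $|N(x)\cap N(y)|=d(x)+d(y)-n$ being at least the green threshold would force $d(x)+d(y)\ge \tfrac{2r}{r+1}n$, whereas Lemma~\ref{lem:max}\ref{it:i} gives $d(x)+d(y)<\tfrac{2(r-1)}{r}n+4r\delta n$, and the gap $\tfrac{2r}{r+1}-\tfrac{2(r-1)}{r}=\tfrac{2}{r(r+1)}$ strictly exceeds $4r\delta$ under our hypothesis. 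Consequently Lemma~\ref{lem:max}\ref{it:iii} applies to every green edge, yielding the upgraded bound
\[
	|N(x)\cap N(y)|\ge d(x)+d(y)-\bigl(\tfrac{r-1}{r}+8r\delta\bigr)n,
\]
which is the form that actually drives the calculation.

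To prove the green max-degree bound, fix $v$, let $U$ be its set of green neighbours, and assume for contradiction that $|U|\ge \delta n$. Then $|U|>\alpha(G)$, so $G[U]$ contains an edge $u_1u_2$. My aim is to locate a $K_{2r-2}$ inside $C:=N(v)\cap N(u_1)\cap N(u_2)$, which together with the triangle $vu_1u_2$ would give a $K_{2r+1}$ and complete the contradiction. Using $|C|\ge |N(v)\cap N(u_1)|+|N(v)\cap N(u_2)|-d(v)$ and the upgraded green inequality, a short calculation yields
\[
	|C|\ge d(v)+d(u_1)+d(u_2)-2\bigl(\tfrac{r-1}{r}+8r\delta\bigr)n\ge \tfrac{r-1}{r}n-16r\delta n,
\]
and the margin over the threshold $\bigl(\tfrac{2r-3}{2r}+r\delta\bigr)n$ appearing in Lemma~\ref{lem:max}\ref{it:ii} comes out to $\tfrac{1}{2r}n-17r\delta n$, which is positive precisely because $\delta<\tfrac{1}{17r^3}$. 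Lemma~\ref{lem:max}\ref{it:ii} then supplies the required $K_{2r-2}$ inside $C$.

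With the claim proved, $e_{\mathrm{green}}(G)<\delta n^2/2$, and summing with the red bound gives $e(G)\le\bigl(\tfrac{r-1}{r}+\delta\bigr)\tfrac{n^2}{2}$. The case $r=1$ is degenerate, since triangle-freeness already forces every neighbourhood to be independent and hence of size less than $\delta n$. The hardest step is clearly the common-neighbourhood estimate for $v,u_1,u_2$: the arithmetic only closes because we feed in the stronger form of the green condition coming from Lemma~\ref{lem:max}\ref{it:iii}, which in turn rests on the preliminary observation that green edges are non-spanning.
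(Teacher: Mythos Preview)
Your proposal is correct and follows essentially the same route as the paper: pass to an edge-maximal supergraph, use the red/green colouring with Corollary~\ref{cor:red} handling the red edges, and derive a contradiction from a hypothetical green vertex of degree~$\ge\delta n$ by finding a triangle and then invoking Lemma~\ref{lem:max}\ref{it:iii} and~\ref{it:ii}. The only cosmetic difference is that you package the ``green edges are non-spanning'' step as a standalone preliminary (arguing via $|N(x)\cap N(y)|=d(x)+d(y)-n$), whereas the paper checks $|N(x)\cup N(y)|<n$ directly inline; the two computations are equivalent, and your explicit remark on the degenerate case $r=1$ is a welcome clarification.
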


\begin{proof}
	Adding further edges to $G$ may create a $K_{2r+1}$ but cannot destroy any of the 
	other assumptions and thus we may assume that $G$ is actually an edge-maximal 
	$K_{2r+1}$-free graph. Let us colour an edge $xy$ of $G$ {\it red} if 
	$|N(x)\cap N(y)|<\frac{r-1}{r}\bigl(d(x)+d(y)\bigr)-\frac{r-1}{r+1}n$ and {\it green}
	otherwise. In view of Corollary~\ref{cor:red} we know that at most $\frac{r-1}{2r}n^2$ 
	edges of $G$ are red and thus it suffices to prove that at most $\delta n^2/2$ edges 
	of $G$ are green.
	If this failed, then some vertex $x$ would have more than $\delta n$ green neighbours
	and, consequently, there would exist a triangle~$xyz$ such that $xy$ and $xz$ are green,
	while the colour of $yz$ is unknown. The definition of~$xy$ being green leads to
	\[
		|N(x)\cup N(y)|=d(x)+d(y)-|N(x)\cap N(y)|
		\le \tfrac1r\bigl(d(x)+d(y)\bigr)+\tfrac{r-1}{r+1}n\,,
	\]
	by Lemma~\ref{lem:max}\ref{it:i} it follows that 
	\[
		 |N(x)\cup N(y)|\le \bigl( \tfrac{2(r-1)}{r^2}+\tfrac{r-1}{r+1} +4\delta \bigr) n<n\,,
	\]
	and hence Lemma~\ref{lem:max}\ref{it:iii} yields  
	\[
		|N(x)\cap N(y)|\ge d(x)+d(y)-\bigl(\tfrac{r-1}{r}+8r\delta\bigr)n\,.
	\]
	Proceeding similarly with the green edge $xz$ one shows 
	\[
		|N(x)\cap N(z)|\ge d(x)+d(z)-\bigl(\tfrac{r-1}{r}+8r\delta\bigr)n\,,
	\]
	so that altogether 
	\begin{align*}
		|N(x)\cap N(y)\cap N(z)| &\ge |N(x)\cap N(y)|+|N(x)\cap N(z)|-|N(x)| \\
		&\ge d(x)+d(y)+d(z)-2\bigl(\tfrac{r-1}{r}+8r\delta\bigr)n \\
		& \ge \bigl(\tfrac{r-1}{r}-16r\delta\bigr)n \ge\bigl(\tfrac{2r-3}{2r}+r\delta\bigr)n\,.
	\end{align*}

	Now applying Lemma~\ref{lem:max}\ref{it:ii} to the set $Q=N(x)\cap N(y)\cap N(z)$
	we find a $K_{2r+1}$ in $G$, which is absurd.  
\end{proof}

\begin{proof}[Proof of Theorem~\ref{thm:rt-odd}]
	For technical reasons it is more convenient to prove a slightly weaker upper bound first, 
	namely 
	\begin{equation}\label{eq:neu}
		e(G)\le \frac{r-1}r\cdot\frac{n^2+n}2+\frac{\delta n^2}2\,.
	\end{equation}

	Arguing indirectly, let $G$ be a $K_{2r+1}$-free graph on $n$ vertices 
	with $\alpha(G)<\delta n$ violating~\eqref{eq:neu}. Let $X\subseteq V(G)$
	be minimal with the property
	\begin{equation}\label{eq:XX}
		e(X) > \frac{r-1}r\cdot\frac{|X|^2+|X|}2+\frac{\delta n^2}2\,,
	\end{equation}
	let $G'$ be the subgraph of $G$ induced by $X$, and write $n'=|X|$.
	As $X$ cannot be empty, we may define $\delta'=\delta n/n'$. Now we would like 
	to apply Proposition~\ref{prop-odd-mindeg} to $G'$ and $\delta'$.
	 
	Notice that the trivial bound $e(X)\le |X|^2/2$ and~\eqref{eq:XX} lead 
	to $n'^2/r >\delta n^2$, whence $r(\delta')^2<\delta<\frac{1}{289r^5}$. 
	Thus we have indeed $\delta'<\frac 1{17r^3}$. Moreover, for every $x\in X$
	the minimality of $X$ yields
	\[
		e(X\sm\{x\}) \le \frac{r-1}r\cdot\frac{|X|^2-|X|}2+\frac{\delta n^2}2
	\]
	and, therefore, $d(x)=e(X)-e(X\sm\{x\})>\frac{r-1}r|X|$. As $x\in X$ was arbitrary, 
	this shows that~$X$ satisfies the required minimum degree condition. Finally,
	${\alpha(G')\le \alpha(G)=\delta n=\delta' |X|}$ is clear. 
	
	So Proposition~\ref{prop-odd-mindeg} discloses  
	\[
		e(X) \le \frac{r-1}r\cdot\frac{(n')^2}2+\frac{\delta'n'\cdot n'}2    
	 		< \frac{r-1}r\cdot\frac{|X|^2+|X|}2+\frac{\delta n\cdot n}2\,,
	\]
	contrary to~\eqref{eq:XX}. Thereby our weaker estimate~\eqref{eq:neu} is proved.
	
	Returning to the proof of Theorem~\ref{thm:rt-odd} itself we consider any graph 
	$G$ as described there. For every $t\in \NN$ let $G_t$ be the $t$-blow up of $G$, 
	i.e., the graph obtained from $G$ upon replacing every vertex by an independent set 
	consisting of $t$ new vertices. Of course $G_t$ is still $K_{2r+1}$-free and due to
	$\alpha(G_t)=t\alpha(G)<\delta |G_t|$ we may apply~\eqref{eq:neu} to $G_t$, thus learning
	\[
		e(G)=\frac{e(G_t)}{t^2}\le \frac{r-1}r\cdot\frac{n^2+n/t}2+\frac{\delta n^2}2\,.
	\]
	As $t\longrightarrow\infty$ this yields indeed
	$e(G)\le \bigl(\frac{r-1}{r}+\delta\bigr)\frac{n^2}{2}$.
\end{proof}

\section{Even cliques: Overview} \label{sec:even-intro}

The entire remainder of this article is concerned with the proof of Theorem~\ref{thm:main} 
and in the present section we would like to give an informal discussion of the strategy  
we shall pursue in the sequel. 

As in the case of odd cliques the first observation is that it suffices to focus on graphs 
satisfying an appropriate minimum degree condition, which is this time going to be 
$\delta(G)\ge\frac{3r-5}{3r-2}n$.
Besides, by making further sacrifices as to the eventual value of $\delta_*$, we can always 
assume that $n$ is sufficiently large. For these reasons, the main work goes into the proof 
of Proposition~\ref{thm:mindeg} below.

So let us suppose we have a sufficiently large $K_{2r}$-free graph $G$ 
with $\delta(G)\ge\frac{3r-5}{3r-2}n$ and $\alpha(G)<\delta n$, where $\delta$
is extremely small. Our task is to prove the upper bound 
$e(G)\le \bigl( \tfrac{3r-5}{3r-2}+\delta-\delta^2\bigr)\frac{n^2}{2}$
on the number of its edges.

The argument starts similar to the proof 
of~\eqref{eq:EHSS} given in~\cite{EHSS}. That is we apply Szemer\'edi's regularity lemma
and try to find one of several configurations in the regular partition, each of which 
would allow us to embed a $K_{2r}$. In~\cite{EHSS} this is done by applying some 
Tur\'an theoretic result to the reduced graph (see~\cite{EHSS}*{Lemma 3.3}) and 
the assumed absence of these configurations leads to an upper bound of the form
$e(G)\le \bigl( \tfrac{3r-5}{3r-2}+\delta'\bigr)\frac{n^2}{2}$ with 
$\delta'\to 0$ as $\delta\to 0$.

However, since for a given $\delta$ we are aiming at a somewhat better estimate 
on $e(G)$ than~\cite{EHSS} does, it may happen to us that this argument does not 
lead to immediate success. 
Yet there is still something we can do in order to proceed. Namely, we can prove a stability 
version of~\cite{EHSS}*{Lemma~3.3}, apply it to the reduced graph, and transfer the 
information thus obtained back to the original graph. In this manner, it can be shown that,
in an approximate sense, our graph $G$ does almost look like the extremal graph described 
in the proof of Proposition~\ref{prop:24}. Specifically, we find a partition 
\begin{equation}\label{eq:Apart}
	V(G)=A_1\dcup \ldots\dcup A_r
\end{equation}
such that each partition class spans at most $o(n^2)$ edges and the edge density between 
$A_1$ and~$A_2$ is, in a hereditary sense, at most $\frac 12+o(1)$ (see 
Proposition~\ref{prop:eta} below for a precise statement). Utilising the lower bound
$e(G)\ge \frac{3r-5}{3r-2}\cdot \frac{n^2}{2}$, which follows from the minimum degree 
assumption, one can prove that these two conditions imply that the partition classes 
$A_1, \ldots, A_r$ have roughly the expected sizes and that, as long as $\{i, j\}\ne \{1, 2\}$,
almost all possible edges between $A_i$ and $A_j$ are present in $G$ 
(see Fact~\ref{fakt:more} below).    
 
When one applies Proposition~\ref{prop:eta} to the essentially 
extremal graph constructed above, one ends up getting 
a partition which is to some extent similar to~\eqref{eq:Vi}, but it does not necessarily 
agree with it. More precisely, one could show that, perhaps after an appropriate permutation 
of the indices, one has $\sum_{i=1}^r |A_i\bigtriangleup V_i|=o(n)$. But the constant 
implied in the $o$-notation here could be extremely large in comparison to $\delta$ 
and thus it seems desirable to produce a better partition before one starts deriving the 
asymptotically optimal upper bound on~$e(G)$.  

Constructing such an improved partition is the subject of Subsection~\ref{subsec:local}.
Its main result, Proposition~\ref{prop:eps}, tells us that the graph $G$ under 
consideration admits a so-called {\it exact partition} $V(G)=B_1\dcup \ldots\dcup B_r$ 
satisfying a long list of properties enumerated in Definition~\ref{dfn:reps}. 
These conditions are rather restrictive and it might be helpful to imagine that, 
up to a relabeling of the indices,~\eqref{eq:Vi} is the only exact partition of 
the extremal graph. The proof of Proposition~\ref{prop:eps} starts from the 
partition~\eqref{eq:Apart} and is based on an iterative procedure that moves vertices around
that do not properly fit into the partition class they currently belong to.

Finally, in Section~\ref{sec:refine} we address the question how the knowledge
of an exact partition allows us to prove an upper bound on $e(G)$ (see 
Proposition~\ref{prop:exact}). The starting point there is the equation 
\[
	2e(G)=\sum_{i=1}^r e(B_i, V)\,.
\]
It turns out that one can separately show upper bounds for each of these terms, namely 
\begin{equation}\label{eq:24}
	e(B_i, V)\le |B_i|(n-|B_1|-|B_2|)+\tfrac12 |B_1||B_2|
				+\tfrac 12\delta n (|B_1|+|B_2|)-\tfrac 12\delta^2n^2
\end{equation}
for $i=1, 2$ (see Claim~\ref{clm:eB1}  below) and 	
\begin{equation}\label{eq:25}
	e(B_i, V)\le |B_i|(n-|B_i|)+\delta n |B_i| 
\end{equation}
for $i=3, \ldots, r$ (see Claim~\ref{clm:eBi}). 
By adding these estimates and optimising over $\sum_{i=1}^r|B_i|=n$ one obtains 
the desired bound $e(G)\le \bigl( \tfrac{3r-5}{3r-2}+\delta-\delta^2\bigr)\frac{n^2}{2}$.

Notice that there are two cases in which~\eqref{eq:25} is rather easy. First, if 
$B_i$ happens to be triangle-free, we get $e(B_i)\le \frac12\delta n|B_i|$ from 
$\alpha(G)<\delta n$ and by adding the trivial upper bound $e(B_i, V\sm B_i)\le |B_i|(n-|B_i|)$
the claim follows. Second, if it happens that $B_i$ misses at least~$2\eps n^2$ edges 
to $V\sm B_i$ for an appropriate (absolute) constant $\eps>0$, then the weaker 
bound $e(B_i)\le \eps n^2$, which exact partitions always satisfy, 
is enough to deduce~\eqref{eq:25}. The general 
argument is a superposition of these two cases. That is, we will define a partition
of $B_i$ into a triangle-free part $B^+_i$ to which the first argument applies and another part
$B^-_i$ that misses sufficiently many edges to $V\sm B_i$ to make the second approach useful.

The estimate~\eqref{eq:24} is much harder. Let us focus here on the case $r=2$ and $i=1$, 
in which many of the difficulties are already visible. To keep this overview simple
we will also assume that every vertex in $B_1$ sends at least $\tfrac 12|B_2|-\tfrac 1{60}n$
edges to $B_2$. 
Recall that in the extremal example there is a set $S\subseteq B_1$ of size close 
to $\delta n$ whose members are complete to $B_2$, whilst each vertex in $B_1\sm S$ 
sends a little bit less than $\frac12(|B_2|+\delta n)$
edges to $B_2$. Moreover, there is only a negligible number of edges within $B_1$.  
To prove~\eqref{eq:24}, we can {\it define} $S$ to be set 
of all $v\in B_1$ that send at least, say, $\frac{7}{16}n$ edges to $B_2$ (recall that 
$|B_2|\approx \frac12 n$). But even if we knew that $|S|\approx \delta n$ and were able to deal 
with $e(B_1, B_2)$, it would still be hard to control $e(B_1)$. 
The key to this problem is to prove that, as in the extremal example, there are $(i)$ 
no edges at all from $S$ to $B_1$ (see Fact~\ref{fact:S} below) and $(ii)$ no short 
odd cycles in~$B_1$ (see Fact~\ref{fact:P}). 
The latter fact helps us in the light of Lemma~\ref{lem:C7} below. 
  
Needless to say, many arguments occurring in this proof are inspired by~\cite{FLZ15}. 
But even for $r=2$ several new ideas are needed for going beyond~\eqref{eq:FLZ}.

\section{Coarse structure}
\label{sec:coarse}

Now we start to analyse the structure of $K_{2r}$-free graphs with huge minimum degree
but without linear independent sets. The main result  we shall obtain in this section
reads as follows.

\begin{prop} \label{prop:eta}
	Given an integer $r\ge 2$ and a real $\eta>0$ there exist $n_0\in\NN$ and $\delta>0$
	such that for every $K_{2r}$-free graph $G$ on $n\ge n_0$ vertices with 
	$\alpha(G)<\delta n$ and $\delta(G)\ge \frac{3r-5}{3r-2}n$ 
	there is a partition 
		\[
		V(G)=A_1\dcup A_2\dcup \ldots \dcup A_{r}
	\]
		with the following properties:
	\begin{enumerate}[label=\rmlabel]
	\item\label{it:eta1} $e(A_i)\le \eta n^2$ for all $i\in [r]$;
	\item\label{it:eta2} if $X_1\subseteq A_1$ and $X_2\subseteq A_2$, then 
		$e(X_1, X_2)\le \frac12 |X_1||X_2|+\eta n^2$.
	\end{enumerate}
\end{prop}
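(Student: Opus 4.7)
The plan is to combine Szemerédi's regularity lemma with a stability version of the weighted Turán-type bound used by Erdős, Hajnal, Sós, and Szemerédi in \cite{EHSS}*{Lemma~3.3}. To begin, I would apply the regularity lemma to $G$ with parameters $\epsilon$ and $k$ depending on $\eta$ and $r$, producing an equipartition $V(G) = V_0 \dcup V_1 \dcup \cdots \dcup V_k$ with $|V_0| \le \epsilon n$ and cluster size $m = (n - |V_0|)/k$. Define the weighted reduced graph $R$ on vertex set $[k]$ by assigning weight $d(V_i, V_j)$ to the edge $ij$ whenever $(V_i, V_j)$ is $\epsilon$-regular of density at least $\epsilon$, and weight $0$ otherwise. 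The minimum-degree assumption on $G$ ensures every vertex of $R$ has weighted degree at least $\bigl(\tfrac{3r-5}{3r-2} - o(1)\bigr)k$. Choosing $\delta$ small in comparison with $1/k$, the hypothesis $\alpha(G) < \delta n$ makes the independence number inside each cluster negligible relative to $m$, which lets the standard embedding arguments from \cite{EHSS} convert dense configurations in $R$ into copies of $K_{2r}$ in $G$. Consequently $K_{2r}$-freeness of $G$ rules out a finite list of weighted sub-configurations in $R$.

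The core technical step is to upgrade \cite{EHSS}*{Lemma~3.3} to a stability statement. The original lemma bounds the total weight of such an $R$ by $\tfrac{3r-5}{3r-2}\binom{k}{2}$; under our minimum-degree hypothesis the weight of $R$ is in fact within $o(k^2)$ of this bound. The stability version I would need asserts that any such $R$ whose weight is within $o(k^2)$ of the maximum admits a partition $[k] = I_1 \dcup \cdots \dcup I_r$ with $|I_1|, |I_2| \approx \tfrac{2}{3r-2}k$ and $|I_3|, \ldots, |I_r| \approx \tfrac{3}{3r-2}k$, for which the total weight of edges inside each $I_j$ is $o(k^2)$, the average weight across $I_1 \times I_2$ is at most $\tfrac{1}{2} + o(1)$, and the average weight across every other pair $I_i \times I_j$ is $1 - o(1)$. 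I would establish this by induction on $r$: the base case $r = 2$ reduces to a weighted bipartite stability argument, and the inductive step uses Zykov-type symmetrisation to peel off a weight-$1$ class, apply the inductive hypothesis to the remainder, and then redistribute the few misplaced vertices.

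Finally, I would lift the partition back to $V(G)$ by setting $A_j := \bigcup_{i \in I_j} V_i$ for $j = 2, \ldots, r$ and $A_1 := V_0 \cup \bigcup_{i \in I_1} V_i$. Property \ref{it:eta1} follows by accounting for intra-$A_j$ edges: those inside individual clusters contribute at most $k \cdot m^2/2 = n^2/(2k)$, those in the $\epsilon k^2$ irregular or sparse pairs contribute at most $2\epsilon n^2$, and those in the $o(k^2)$ dense pairs internal to $I_j$ contribute $o(n^2)$; all three terms lie below $\eta n^2$ for $\epsilon$ small and $k$ large. Property \ref{it:eta2} is verified by decomposing $X_1 \times X_2$ into pieces $(X_1 \cap V_i) \times (X_2 \cap V_j)$ for $i \in I_1$, $j \in I_2$: on each regular pair of density at most $\tfrac{1}{2} + o(1)$ the $\epsilon$-regularity gives $e(X_1 \cap V_i, X_2 \cap V_j) \le (\tfrac{1}{2} + o(1))|X_1 \cap V_i||X_2 \cap V_j|$ up to lower-order terms, while the exceptional contributions (irregular pairs, the set $V_0$, and the few $I_1$-$I_2$ pairs of excess density) are absorbed into the $\eta n^2$ slack. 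The main obstacle is the stability version of \cite{EHSS}*{Lemma~3.3} itself; the inductive argument of \cite{EHSS} was designed only to establish the extremal bound, and turning it into a structural result requires coupling each Turán-type step with an Erdős-Simonovits-style stability input uniformly in $r$.
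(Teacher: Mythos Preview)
Your overall strategy---regularity lemma, a stability version of \cite{EHSS}*{Lemma~3.3} on the reduced graph, then lifting the partition back to $V(G)$---is exactly the route the paper takes. There are, however, two divergences worth flagging.

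First, the paper discretises the reduced graph into three colours (green for sparse or irregular pairs, blue for density in $[\theta,\tfrac12+\theta)$, red for density $\ge\tfrac12+\theta$) rather than carrying real-valued weights. Its stability result (Proposition~\ref{prop:silly}) then asserts that under the minimum-degree hypothesis the coloured reduced graph admits a partition $W_0\dcup W_1\dcup\ldots\dcup W_r$ with $|W_0|$ small, all edges inside $W_1,\ldots,W_r$ green, and \emph{no red edge whatsoever} between $W_1$ and $W_2$. Your stability conclusion only says that the \emph{average} weight across $I_1\times I_2$ is at most $\tfrac12+o(1)$; this is too weak for property~\ref{it:eta2}, since an average of $\tfrac12$ is compatible with a positive proportion of $I_1$--$I_2$ pairs having density near $1$. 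In your lifting step you appeal to ``the few $I_1$--$I_2$ pairs of excess density'' being absorbable, but nothing in your stated stability output bounds their number. You need the stronger conclusion that every (or all but $o(k^2)$) such pair has density $\le\tfrac12+o(1)$, which is precisely what the discretised no-red-edge formulation delivers for free.

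Second, the paper's proof of the stability step is quite different from your proposed induction with Zykov symmetrisation. It first shows (Lemma~\ref{lem:Fr+}) that an $\ccF_{2r}$-free coloured graph containing no red $K_{r-1}$ has total weight at most $\tfrac{r-2}{r-1}n^2<\tfrac{3r-5}{3r-2}n^2$; the minimum-degree hypothesis therefore forces a red $K_{r-1}$, which is extended by one further vertex to a template $\{v_1,\ldots,v_r\}$ realising the extremal colour pattern, and each $W_i$ is simply the set of vertices whose colours towards $\{v_1,\ldots,v_r\}$ match those of $v_i$. No induction on $r$ is needed. Your inductive scheme may well be workable, but note that Zykov symmetrisation does not preserve minimum degree, so you would have to pass through a total-weight formulation and then recover the pairwise structural information afterwards---which is extra work the paper's direct argument avoids.
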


This will be shown by means of Szemer\'edi's famous regularity lemma~\cite{Sz78} 
and we commence by introducing some terminology.
Given a graph~$G$ and two nonempty disjoint sets $A, B\subseteq V(G)$ we say 
for two real numbers~$\delta>0$ and $d\in [0, 1]$
that the pair $(A, B)$ is {\it $(\delta, d)$-quasirandom} if for all $X\subseteq A$ 
and $Y\subseteq B$ the estimate $\big|e(X, Y)-d|X||Y|\big|\le \delta |A||B|$ holds. 
If we just say that the pair $(A, B)$ is {\it $\delta$-quasirandom} we mean that it 
happens to be $(\delta, d)$-quasirandom for $d=e(A, B)/|A||B|$.

\begin{thm}[Szemer\'edi's regularity lemma] 
	Given $\xi>0$ and $t_0\in\NN$ there exists an integer $T_0$ such that every graph $G$
	on $n\ge t_0$ vertices admits a partition 
		\begin{equation}\label{eq:Sz}
		V(G)=V_0\dcup V_1\dcup \ldots\dcup V_t
	\end{equation}
		of its vertex set such that 
	\begin{enumerate}
		\item[$\bullet$] $t\in [t_0, T_0]$, $|V_0|\le \xi |V(G)|$, and $|V_1|=\ldots=|V_t|>0$, 
		\item[$\bullet$] and for every $i\in [t]$ the set
						\[
				\bigl\{j\in [t]\sm \{i\}\colon (V_i, V_j) \text{ is not $\xi$-quasirandom}\bigr\}
			\]
						has size at most $\xi t$.
	\end{enumerate}
\end{thm}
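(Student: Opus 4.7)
The plan is to follow the standard energy-increment strategy due to Szemer\'edi. For a partition $\mathcal P=\{V_1,\ldots,V_t\}$ of $V(G)$ (possibly with an exceptional part $V_0$ that we ignore in the index) define the \emph{mean-square density}
\[
 q(\mathcal P)=\sum_{i,j\in[t]}\frac{|V_i|\,|V_j|}{n^2}\,d(V_i,V_j)^2,
\]
where $d(V_i,V_j)=e(V_i,V_j)/(|V_i|\,|V_j|)$. The two basic facts are that $0\le q(\mathcal P)\le 1$ and that refining $\mathcal P$ never decreases $q$; the latter is just Cauchy--Schwarz applied inside each pair $(V_i,V_j)$. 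I would then prove the usual ``defect'' Cauchy--Schwarz: if $(A,B)$ is \emph{not} $\xi$-quasirandom, witnessed by $X\subseteq A,\ Y\subseteq B$ with $|e(X,Y)-d(A,B)|X||Y||>\xi|A||B|$, then splitting $A$ into $\{X,A\setminus X\}$ and $B$ into $\{Y,B\setminus Y\}$ increases the contribution of the pair to $q$ by at least $\xi^4\,|A||B|/n^2$.

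With this in hand, I would run the refinement iteration. Start from a trivial equipartition into $t_0$ parts (plus a small $V_0$). Given a current equipartition $\mathcal P=\{V_1,\ldots,V_t\}$, if at least $\xi t$ vertices $i$ have ``too many'' non-quasirandom neighbours, then for each bad pair choose witnessing subsets and take the common refinement. Summing the gain over all bad pairs gives $q(\mathcal P')\ge q(\mathcal P)+\xi^5$ (say). Since $q\le 1$, after at most $\lceil \xi^{-5}\rceil$ rounds we arrive at a partition that is $\xi$-regular in the sense required. The number of parts is bounded by a tower-type function of $t_0$ and $\xi^{-1}$; whatever the exact value, it yields the required $T_0$.

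The last technical step is to convert the resulting partition into one in which $V_1,\ldots,V_t$ all have \emph{exactly} the same size. I would do this by choosing a common part-size parameter $m$ that divides $|V_i|$ approximately evenly, chopping each $V_i$ into pieces of size $m$, and dumping the leftover fewer than $m$ vertices from each class into the exceptional set $V_0$. One chooses $m$ and $t_0$ so that the total leftover, together with the original $V_0$, stays below $\xi n$; a standard counting shows that quasirandomness survives this subdivision up to a controlled loss, so by running the iteration with a slightly smaller $\xi'$ in place of $\xi$ one recovers $\xi$-quasirandomness at the end.

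The main obstacle is the bookkeeping: one has to choose the internal constants ($\xi'$, the refinement increment, the initial number of parts $t_0'$) so that (i)~the energy-increment argument terminates in a number of steps bounded purely by $\xi$ and $t_0$, (ii)~the equipartition repair step does not destroy the quasirandomness of too many pairs, and (iii)~the final bound $T_0$ is a function of $\xi$ and $t_0$ only. Everything else is a routine application of Cauchy--Schwarz and averaging, but threading these constants consistently through the iteration is the delicate part.
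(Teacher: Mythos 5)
The paper does not reprove the regularity lemma at all: it cites Szemer\'edi and, in the remark following the statement, derives this particular form from the usual one (at most $\xi t^2$ irregular pairs) by applying the standard lemma with $\xi'\ll\xi$ and $t_0'\gg\max(t_0,\xi^{-1})$ and relocating the few classes with many irregular partners into $V_0$, as in \L uczak~\cite{Lu06}. Your from-scratch energy-increment proof is therefore a different route in principle, but as written it does not establish the statement actually claimed. The second bullet requires that \emph{every} $i\in[t]$ has at most $\xi t$ non-quasirandom partners. Your iteration stops as soon as \emph{fewer than} $\xi t$ indices have too many irregular partners; at that moment up to $\xi t$ classes may still each have close to $t$ irregular partners, so the conclusion fails for exactly those indices. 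You cannot repair this by strengthening the stopping rule to ``stop only when every index is good'': a single bad index produces only about $\xi t$ irregular pairs, hence an energy gain of order $\xi^5/t$, and since $t$ grows along the iteration this no longer bounds the number of rounds in terms of $\xi$ and $t_0$ alone. The missing idea is precisely the relocation step the paper sketches: run the whole argument with an internal $\xi'\ll\xi$, and at the end move the fewer than $\xi' t$ bad classes into the exceptional set, checking that $|V_0|$ remains below $\xi n$, that the number of surviving classes is still at least $t_0$, and that each surviving class has at most $\xi$ times the \emph{new} number of classes as irregular partners.

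A secondary problem is that you defer the re-equalization of class sizes entirely to the end. In the standard proof one re-equalizes after every refinement round, because your termination argument counts irregular pairs by number while the energy increment is weighted by $|V_i||V_j|/n^2$; these only match when the classes have comparable sizes, and the common refinement by witness sets destroys that. With equalization postponed, the claimed per-round gain of $\xi^5$ is not justified for the intermediate partitions. Moreover the final chopping-and-dumping step can itself create new irregular pairs and enlarge $V_0$, so it too must be absorbed into the $\xi'$ bookkeeping. None of this is conceptually hard --- it is the standard Szemer\'edi argument plus \L uczak's cleaning step --- but in its present form the proposal proves a weaker statement (all but at most $\xi t$ classes are good) than the vertex-wise version the paper uses.
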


In the literature one often finds other versions of the regularity lemma, where instead of the 
second bullet above it is just demanded that at most $\xi t^2$ pairs $(V_i, V_j)$ 
with distinct $i, j\in [t]$ fail to be $\xi$-quasirandom. Applying such a regularity 
lemma to appropriate constants $\xi'\ll \xi$ and $t_0'\gg \max(t_0, \xi^{-1})$ and relocating partition 
classes with many irregular partners to $V_0$ one can obtain the version stated here; 
this argument has been used before by \L uczak~\cite{Lu06}, who explains it in more detail. 

Next we deal with certain configurations in regular partitions of graphs with small 
independence number which allow us to build cliques. The lemma that follows is implicit 
in~\cite{EHSS}*{Section 4} but for reasons of self-containment we shall supply its short 
proof. In its formulation we work with a one-sided version of quasirandomness that 
is enough for our purposes: If $G$ is a graph, a pair $(A, B)$ of disjoint subsets 
of $V(G)$ is said to be {\it $(\delta, d)$-dense} for $\delta>0$ and $d\in [0, 1]$, 
if for all $X\subseteq A$ and $Y\subseteq B$ we have $e(X, Y)\ge d |X||Y|-\delta |A||B|$. 

\begin{lemma} \label{lem:ab}
	Suppose that integers $a\ge b\ge 1$ as well as a real number $\theta\in(0, 1]$
	are given and set $\xi=\bigl(\frac{\theta^2}4\bigr)^{a-1}$, 
	$\delta=\bigl(\frac{\theta}2\bigr)^{a-1}$. Let $H$ be a 
	graph possessing a vertex partition 
	\[
		V(H)=V_1\dcup \ldots \dcup V_a
	\]
	into nonempty classes satisfying 
	\begin{enumerate}[label=\alabel]
		\item\label{it:aba} if $1\le i<j\le a$, then $(V_i, V_j)$ is 
			$(\xi, d_{ij})$-dense for some $d_{ij}\in [\theta, 1]$;
		\item\label{it:abb} if $1\le i<j\le b$, then $d_{ij}\ge \frac12+\theta$; 
		\item\label{it:abc} if $X\subseteq V_i$ and $|X|\ge \delta |V_i|$ 
			for some $i\in [a]$, then $X$ spans at least one edge in $H$.
	\end{enumerate}
	Then $H$ contains a clique of order $a+b$.
\end{lemma}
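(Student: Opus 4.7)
The plan is to prove the lemma by induction on $a + b$. The base case $a = b = 1$ is immediate from hypothesis (c): since $\delta = 1$, the set $V_1$ itself satisfies $|V_1| \ge \delta|V_1|$, so (c) produces an edge $u_1v_1 \subseteq V_1$, which is the required $K_2$. For the inductive step I would argue differently depending on whether $a > b$ or $a = b$.

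Suppose first that $a > b$. I would single out a vertex $v \in V_a$ by applying the standard double-counting argument to the $(\xi, d_{aj})$-density of $(V_j, V_a)$: for each $j < a$, at most $4\xi|V_a|/\theta$ vertices of $V_a$ fail to have $\ge (d_{aj} - \theta/4)|V_j|$ neighbours in $V_j$, so with $\xi = (\theta^2/4)^{a-1}$ and $a - 1$ relevant classes, a positive proportion of $V_a$ remains ``good''. Restrict each $V_j$ (for $j < a$) to $V_j' := V_j \cap N(v)$, which has $|V_j'| \ge (3\theta/4)|V_j|$. The crux is to check that $\{V_j'\}_{j<a}$ satisfies hypotheses (a)--(c) with parameters $\xi_0 = (\theta^2/4)^{a-2}$ and $\delta_0 = (\theta/2)^{a-2}$ for the pair $(a-1, b)$. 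This reduces to the two arithmetic inequalities $\xi \cdot (4/3\theta)^2 \le \xi_0$ and $\delta_0 \cdot (3\theta/4) \ge \delta$, both of which are immediate from the definitions. The inductive hypothesis then produces a $K_{a-1+b}$ in $\bigcup_{j<a} V_j'$, which together with $\{v\}$ gives the desired $K_{a+b}$.

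Suppose now that $a = b$. The same counting identifies a subset $V_a^* \subseteq V_a$ of good vertices with $|V_a^*| \ge |V_a|/2 \ge \delta|V_a|$; hypothesis (c) then yields an edge $uv \subseteq V_a^*$. Since every $j < a$ satisfies $j \le b$, hypothesis (b) gives $d_{aj} \ge \tfrac12 + \theta$, so each of $u, v$ has $\ge (\tfrac12 + \tfrac{3\theta}{4})|V_j|$ neighbours in each $V_j$; consequently $V_j' := V_j \cap N(u) \cap N(v)$ has size at least $\tfrac{3\theta}{2}|V_j| \ge \theta|V_j|$. The same arithmetic as above (with $\theta$ in place of $3\theta/4$) shows that $\{V_j'\}_{j<a}$ meets the inductive hypothesis for $(a - 1, b - 1)$; adjoining $u$ and $v$ to the resulting $K_{a+b-2}$ completes the clique.

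The main obstacle is the bookkeeping of the parameters that makes the induction go through; this is precisely what forces the specific exponents $a - 1$ in the stated $\xi$ and $\delta$. Importantly, the case split between $a > b$ (single-vertex pick) and $a = b$ (edge pick) is unavoidable: only in the latter case is hypothesis (b) available to guarantee that the common neighbourhood of the two chosen endpoints inside each $V_j$ remains a positive fraction of $|V_j|$.
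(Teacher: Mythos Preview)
Your proposal is correct and follows essentially the same approach as the paper's proof: induction on $a+b$, with the same base case and the same case split, selecting a single good vertex from $V_a$ when $a>b$ and a good edge from $V_a$ (via hypothesis~(c)) when $a=b$, then passing to neighbourhoods and checking that the parameters $\xi_0=(\theta^2/4)^{a-2}$, $\delta_0=(\theta/2)^{a-2}$ suffice for the inductive call. The only differences are cosmetic choices of threshold ($d_{aj}-\theta/4$ in place of the paper's $\theta/2$ and $\tfrac12+\tfrac\theta2$), and one small omission: your claim $|V_a^*|\ge |V_a|/2$ in the $a=b$ case tacitly uses $\theta\le\tfrac12$, which is automatic since $b\ge 2$ forces $\tfrac12+\theta\le d_{12}\le 1$.
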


\begin{proof}
	We argue by induction on $a+b$. In the base case, $a=b=1$, we have $\delta=1$
	and by condition~\ref{it:abc} applied to $X=V_1$ there is indeed an edge in $H$.
	
	In the induction step we certainly have $a\ge 2$ and we assume first that $a>b$. 
	For every $i\in [a-1]$ the set 
		\[
		X(i)=\bigl\{v\in V_a\colon |N(v)\cap V_i|\le \tfrac\theta 2|V_i|\bigr\}
	\]
		cannot be very large, as condition~\ref{it:aba} yields
		\[
		\tfrac\theta 2 |V_i||X(i)|\ge e(V_i, X(i)) \ge \theta |V_i||X(i)|-\xi |V_i||V_a|\,.
	\]
	Together with $\xi\le \frac{\theta}{2a}$ 
	this leads to $|X(i)|\le  \frac{1}{a}|V_a|$.
	Now pick some ${v_*\in V_a\sm \bigcup_{i\in [a-1]}X(i)}$
	and set $V'_i=N(v_*)\cap V_i$ for $i=1, \ldots, a-1$.
	The definition of $X(i)$ gives 
	$|V'_i|\ge \frac{\theta}2 |V_i|$ for every $i\in [a-1]$ and, hence,
	the sets $V'_1, \ldots, V'_{a-1}$ 
	have the above properties~\ref{it:aba},~\ref{it:abb}, and~\ref{it:abc}
	for $a-1$, $\tfrac{\xi}{\theta^2/4}$, and $\frac{\delta}{\theta/2}$ here 
	in place of $a$, $\xi$, and $\delta$ there. So by the induction hypothesis 
	the neighbourhood of $v_*$ contains a $K_{a+b-1}$, wherefore indeed 
	$K_{a+b}\subseteq H$.
	
	The case $a=b$ is similar, but instead of the sets $X(i)$ introduced above we 
	consider
		\[
		Y(i)=\bigl\{v\in V_a\colon 
			|N(v)\cap V_i|\le \bigl(\tfrac 12+\tfrac\theta 2\bigr)|V_i|\bigr\}
	\]
		for $i\in [a-1]$. Invoking condition~\ref{it:abb} one can show 
	$|Y(i)|\le \frac{1}{a}|V_a|$ in the same way as before and, hence, the set 
	$L=V_a\sm \bigcup_{i\in [a-1]}Y(i)$ satisfies $|L|\ge \frac 1a|V_a|\ge \delta |V_a|$.
	So by~\ref{it:abc} there is an edge $v_*w_*$ both of whose endvertices belong to $L$.
	Since $|N(v_*)\cap N(w_*)\cap V_i|\ge \theta |V_i|$ holds for each $i\in [a-1]$,
	the induction hypothesis allows us to find a $K_{a+b-2}$ in the common neighbourhood
	of $v_*w_*$ and again we obtain $K_{a+b}\subseteq H$. 
\end{proof}

Suppose now that the regularity lemma has been applied, with a sufficiently small 
accuracy parameter $\xi$, to some graph $G$ of small independence number, meaning
that for some large integer $t$ we have a partition of $V(G)$ such as~\eqref{eq:Sz}. 
When one now attempts to find a $K_{2r}$ in $G$ by means of Lemma~\ref{lem:ab}, it only
matters which of the quasirandom pairs~$(V_i, V_j)$ have their densities, for an 
appropriate $\theta>0$, in the interval $\bigl[\theta, \frac12+\theta\bigr)$ or even in  
$\bigl[\frac12+\theta, 1\bigr]$. We shall encode such information by the use of coloured
edges in the reduced graph, with green edges corresponding to pairs that are either irregular
or too sparse to be useful, and blue (or red) edges corresponding to quasirandom pairs 
of medium (or large) density.  

Let us say that a {\it coloured graph} is a complete graph all of whose edges
have been coloured red, blue, or green. Associated with any coloured graph $G$,
say with vertex set $V$, we have its so-called {\it weight function} 
$w\colon V^2\longrightarrow \{0, 1, 2\}$ defined by 
\[
	w(x, y)=\begin{cases}
		0 & \text{ if $x=y$ or $xy$ is green,} \cr
		1 & \text{ if $xy$ is blue,} \cr
		2 & \text{ if $xy$ is red}
		\end{cases}
\]
for all $x, y\in V$. We will often identify $G$ with the pair $(V, w)$. The {\it degree}
of a vertex $x$ of a coloured graph $G=(V, w)$ is defined to be the sum
\[
	d(x)=\sum_{y\in V} w(x, y)
\]
and by $e(G)$ we mean half of the sum of the degrees $d(x)$ as $x$ varies over $V$.

Two coloured graphs are said to be {\it isomorphic} if there is a colour-preserving bijection 
between their vertex sets. A coloured graph $(V', w')$ is a {\it subgraph} of a coloured 
graph~$(V, w)$ if $V'\subseteq V$ and, additionally, $w'(x, y)\le w(x, y)$ holds 
for all $x, y\in V'$. 

Next, we come to the coloured graphs which are relevant in connection with Lemma~\ref{lem:ab}.
For integers $a\ge b\ge 1$ the coloured graph on $a$ vertices without green edges 
whose red edges form a clique of order $b$ will be denoted by $G_{a+b, b}$. 
For every integer $r\ge 2$ we set $\ccF_{2r}=\{G_{2r, 1}, \ldots, G_{2r, r}\}$. 
A coloured graph is said to be $\ccF_{2r}$-free if none of its subgraphs 
is isomorphic to a member of $\ccF_{2r}$.

In their proof of~\eqref{eq:EHSS}, Erd\H{o}s, Hajnal, Szemer\'edi, and S\'os
use a lemma saying that every $\ccF_{2r}$-free coloured graph on $n$ vertices 
satisfies $e(G)\le \frac{3r-5}{3r-2}n^2$ (see~\cite{EHSS}*{Lemma~3.3}). For the proof
of Proposition~\ref{prop:eta} we will use a stability version of this lemma. There 
are various such statements, a rather strong one being the following.

\begin{prop}\label{prop:clara} 
	Suppose that $r\ge 2$ and that $G$ is a $\ccF_{2r}$-free coloured graph on $n$ 
	vertices with $\delta(G)>\frac{14r-24}{7r-5}n$. Then there is a partition 
	$V(G)=W_1\dcup\ldots\dcup W_r$ such that all edges within the partition 
	classes are green and there are no red edges between $W_1$ and $W_2$.
\end{prop}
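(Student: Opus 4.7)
The plan is to prove the proposition by induction on $r$.

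\emph{Base case $r=2$.} In this case $\ccF_4=\{G_{4,1},G_{4,2}\}$ is unusually stringent: $G_{4,2}$ is a single red edge on two vertices, so $G$ has no red edges whatsoever, and then $G_{4,1}$ reduces to an all-blue $K_3$, so the blue subgraph $B$ of $G$ is triangle-free. Since no red edges contribute to the coloured-graph degree, we have $\delta(B)=\delta(G)>\tfrac{4}{9}n>\tfrac{2}{5}n$, and the classical theorem of Andr\'asfai, Erd\H{o}s, and S\'os then yields that $B$ is bipartite; its two bipartition classes serve as $W_1,W_2$, and the condition about no red edges between $W_1$ and $W_2$ is automatic.

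\emph{Inductive step $r\ge 3$.} Assume the proposition holds for $r-1$. The central structural observation is that for any vertex $v\in V(G)$, the induced subgraph $H=G[N_R(v)]$ on the red neighbourhood of $v$ is $\ccF_{2(r-1)}$-free: a copy of $G_{2(r-1),b}$ in $H$ (with $b\in[r-1]$) extends to a copy of $G_{2r,b+1}$ in $G$ upon adjoining $v$, because $v$ is red-adjacent to every vertex of $N_R(v)$ and therefore both enlarges the red clique by one vertex and preserves the absence of green edges. A short computation starting from the inequality $d_H(u)\ge d_G(u)-2n+2|N_R(v)|$ valid for $u\in N_R(v)$ shows that the minimum-degree hypothesis on $G$ descends to the bound $\delta(H)>\tfrac{14(r-1)-24}{7(r-1)-5}|N_R(v)|$ required by the induction hypothesis, provided $|N_R(v)|>\tfrac{7r-12}{7r-5}n$.

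Granted such a vertex $v_0$, the plan is to take the $(r-1)$-partition $N_R(v_0)=W_1'\dcup\cdots\dcup W_{r-1}'$ furnished by the induction hypothesis, set $W_i=W_i'$ for $i\in[r-1]$, and place the residual set $V(G)\setminus N_R(v_0)$ into $W_r$; choosing the labelling of the $W_i'$ to match the one guaranteed by the induction hypothesis then preserves the no-red-between-$W_1$-$W_2$ requirement automatically.

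Two places will demand care. First, one needs a vertex $v_0$ with $|N_R(v_0)|>\tfrac{7r-12}{7r-5}n$; because the average red degree can fall below this threshold, existence of $v_0$ does not follow from a simple averaging, and we will need to exploit $\ccF_{2r}$-freeness more finely---perhaps by showing that otherwise the coloured graph can be partitioned via an independent direct route, or by first isolating a vertex whose non-green neighbourhood is unusually large and then peeling off a blue-dominated set. Second, and this is the main obstacle, one needs that the residual class $W_r=V(G)\setminus N_R(v_0)$ spans only green edges; any putative non-green edge $uu'$ inside $W_r$ should be forbidden by assembling a copy of some $G_{2r,b}$ in $G$ from $u$, $u'$, the vertex $v_0$, and a non-green substructure inside $N_R(v_0)$ supplied by the inductive partition. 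If a single choice of $v_0$ does not suffice, an iterative reassignment of misplaced vertices between classes, or a selection of $v_0$ explicitly maximising the amount of green structure in $V(G)\setminus N_R(v_0)$, should close the argument.
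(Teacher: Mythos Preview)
The paper does not actually prove this proposition: it states the result and then says ``A somewhat lengthy proof of this result is given in~\cite{LR-b}'', a companion paper whose title (\emph{Weighted variants of the Andr\'asfai--Erd\H{o}s--S\'os Theorem}) suggests a direct structural argument in the spirit of AES rather than an induction on~$r$. For the purposes of the present paper only the weaker Proposition~\ref{prop:silly} is needed, and that is what is proved here.

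Your base case $r=2$ is correct and pleasant: $G_{4,2}$ being a single red edge kills all red edges, $G_{4,1}$ being a blue triangle makes the blue graph triangle-free, and $\tfrac{4}{9}>\tfrac{2}{5}$ lets Andr\'asfai--Erd\H{o}s--S\'os finish.

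The inductive step, however, is a plan rather than a proof, and the two gaps you flag are real. For the existence of $v_0$ with $|N_R(v_0)|>\tfrac{7r-12}{7r-5}n$: in the extremal coloured graph the maximal red degree is $\tfrac{3r-5}{3r-2}n$, which exceeds your threshold only by $\tfrac{1}{(3r-2)(7r-5)}n$, while the minimum coloured degree $\tfrac{6r-10}{3r-2}n$ exceeds the hypothesis by $\tfrac{2}{(3r-2)(7r-5)}n$. So there is enough slack to recolour a few red edges blue and push every red degree at or below your threshold while keeping $\delta(G)>\tfrac{14r-24}{7r-5}n$; your ``perhaps by\ldots'' does not address this. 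For the second gap, take the extremal configuration, pick $v_0$ in one of the large classes $W_i$ with $i\ge 3$, and recolour a single edge $v_0u$ (with $u$ in the same $W_i$) from red to blue: then $u\in V(G)\setminus N_R(v_0)$, $v_0\in V(G)\setminus N_R(v_0)$, and $v_0u$ is blue, so your proposed $W_r$ already fails to be all-green. An ``iterative reassignment'' might repair this, but you give no potential function, no termination argument, and no reason why the repaired partition still separates $W_1$ from $W_2$ correctly.

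In short: the inductive skeleton is natural and the reduction $G[N_R(v)]$ is $\ccF_{2(r-1)}$-free is genuinely useful, but as written the argument is incomplete at exactly the two points you identify, and neither gap closes with the hints you offer. The approach in~\cite{LR-b} appears to be different, proceeding through a weighted generalisation of the Andr\'asfai--Erd\H{o}s--S\'os theorem rather than induction on $r$.
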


A somewhat lengthy proof of this result is given in~\cite{LR-b}.
For the purposes of the present work, however, it suffices to know only the weaker statement 
that follows. To keep this article as self-contained as possible, we 
will supply a quick sketch of its proof below. 

\begin{prop}\label{prop:silly}
	Let $r\ge 2$ and let $\alpha>0$ be sufficiently small. Then every 
	$\ccF_{2r}$-free coloured graph $G$ on $n$ vertices with 
	$\delta(G)\ge \frac{2(3r-5)-\alpha}{3r-2}n$ admits a partition 
		\[
		V(G)=W_0\dcup W_1\dcup\ldots\dcup W_r
	\]
		of its vertex set such that 
	$|W_0|\le \alpha n$, all edges within the classes $W_1, \ldots, W_r$ are green,
	and no edge from $W_1$ to $W_2$ is red.
\end{prop}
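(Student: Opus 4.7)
The plan is to argue by induction on $r$. For the base case $r=2$, being $\ccF_4$-free asserts exactly that $G$ has no red edge and no blue triangle, so $e(G)$ equals the number of blue edges and the minimum degree hypothesis forces this to exceed $\bigl(\tfrac14-\tfrac{\alpha}{8}\bigr)n^2$; the Erd\H{o}s--Simonovits stability theorem applied to the blue graph then delivers the required partition at once.

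For the inductive step, I would fix $r\ge 3$, assume the result for $r-1$, and exploit the following observation: for any $u\in V(G)$, if $M=N_{\mathrm{red}}(u)$ denotes the red neighbourhood of $u$, then $G[M]$ is $\ccF_{2r-2}$-free. Indeed any $G_{2r-2, b}$-subgraph inside $M$ combines with $u$ (which is red-adjacent to every vertex of $M$) into a $G_{2r, b+1}$-subgraph of $G$, contradicting the hypothesis. The aim is to find $u$ whose red degree approximates the extremal value $\tfrac{3r-5}{3r-2}n$, so that the crude estimate $d_M(w)\ge d_G(w)-2(n-|M|)$ applied to $w\in M$ yields $\delta(G[M])\ge \bigl(\tfrac{2(3r-8)}{3r-5}-O(\alpha)\bigr)|M|$, matching the inductive minimum degree hypothesis for $r-1$.

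The main obstacle is ensuring the existence of such $u$: simple averaging over red degrees falls short, because the average red degree in the extremal configuration is strictly below the target (vertices of the two blue-paired classes contribute less red degree than vertices of the remaining classes). Overcoming this requires exploiting $\ccF_{2r}$-freeness more carefully, e.g.\ by counting red $(r-2)$-cliques extending $u$ and combining with the upper bound $e(G)\le \tfrac{3r-5}{3r-2}n^2$ from \cite{EHSS}*{Lemma~3.3}. Once $u$ is secured and the inductive hypothesis supplies a partition $M=W_0^*\dcup W_1\dcup\ldots\dcup W_{r-1}$, I would set $W_r:=V(G)\sm M$, merge $W_0^*$ into $W_0$, and additionally relocate to $W_0$ any vertex of $W_r$ spanning a non-green edge inside $W_r$; a counting argument using that any such non-green edge would extend via transversal red $(r-1)$-cliques in $W_1\cup\ldots\cup W_{r-1}$ to a forbidden subgraph shows that only $O(\alpha)n$ relocations are needed, completing the partition.
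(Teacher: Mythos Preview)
Your approach is genuinely different from the paper's, which does not use induction at all. The paper first proves a variant of the Erd\H{o}s--Hajnal--S\'os--Szemer\'edi density lemma (Lemma~\ref{lem:Fr+}) showing that any $\ccF_{2r}$-free coloured graph avoiding a red $K_{r-1}$ satisfies $e(G)\le\tfrac{r-2}{r-1}n^2$; since the minimum degree hypothesis forces $e(G)$ above this, $G$ must contain a red clique $K=\{v_1,v_3,\ldots,v_r\}$. A short degree count then locates one further vertex $v_2$ with $d_K(v_2)\ge 2r-3$, necessarily joined to $K$ by $r-2$ red edges and one blue edge. The partition classes $W_1,\ldots,W_r$ are then defined \emph{directly} as the sets of vertices whose colour pattern to $L=\{v_1,\ldots,v_r\}$ matches that of $v_1,\ldots,v_r$ respectively, and a weighted degree function $q(x)$ bounds $|W_0|$ by a single line. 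No induction, no stability theorem, no cleanup of a residual class.

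Your sketch, by contrast, has a genuine gap that you yourself flag: you never produce the vertex $u$ with red degree close to $\tfrac{3r-5}{3r-2}n$. Averaging does fail, and your suggested remedy of ``counting red $(r-2)$-cliques'' is not an argument. Even granting such a $u$, your treatment of $W_r=V(G)\setminus M$ is incomplete: you need every vertex remaining in $W_r$ to be red-adjacent to large parts of each $W_1,\ldots,W_{r-1}$ so that a non-green edge inside $W_r$ genuinely extends to a forbidden configuration, and bounding the number of exceptional vertices by $O(\alpha)n$ requires work you have not done. Finally, the constants degrade through the induction (you apply the hypothesis with some $\alpha'$ depending on $\alpha$ and must still land back at $\alpha$), and you do not address this. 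Your base case is essentially fine, though the clean statement you want there is the Andr\'asfai--Erd\H{o}s--S\'os theorem (triangle-free plus $\delta>\tfrac{2}{5}n$ implies bipartite), not Erd\H{o}s--Simonovits stability, which would only give approximate bipartiteness.
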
 	  

We prepare the proof of this proposition by the following variant of~\cite{EHSS}*{Lemma~3.3},
which can be proved in the same way. Let $RK_{r-1}$ denote a red clique of order $r-1$
and set $\ccF^+_{2r}=\ccF_{2r}\cup\{RK_{r-1}\}$. 

\begin{lemma}\label{lem:Fr+}
	For $r\ge2$ every $\ccF^+_{2r}$-free coloured graph $G$
	on $n$ vertices satisfies 
	\[
		e(G)\le \tfrac{r-2}{r-1} n^2\,.
	\]
\end{lemma}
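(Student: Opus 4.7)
I would proceed by induction on $r$, adapting the strategy from the proof of~\cite{EHSS}*{Lemma~3.3} with the new ingredient coming from the extra hypothesis that $G$ contains no red $K_{r-1}$.

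The base case $r=2$ is vacuous: $RK_1$ is a single vertex, so the hypothesis forces $n=0$ and the bound is trivially $0$. The case $r=3$ is immediate: $RK_2$-freeness forbids red edges altogether, after which $G_{6,1}$-freeness means the blue graph is $K_5$-free, and Tur\'an's theorem yields $e(G)\le \tfrac{3}{4}\binom{n}{2}<\tfrac{1}{2}n^2$ directly.

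For the inductive step with $r\ge 4$, the crucial local observation is that for every vertex $v$ of an $\ccF^+_{2r}$-free coloured graph $G$, the induced coloured graph on the red neighbourhood $R(v)$ is itself $\ccF^+_{2(r-1)}$-free. Indeed, $R(v)$ contains no red $K_{r-2}$, for otherwise $\{v\}$ together with such a clique would form a red $K_{r-1}$ in $G$, contradicting $RK_{r-1}$-freeness; and $R(v)$ contains no $G_{2r-2,k}$ for $k=1,\dots,r-1$, since appending $v$ and then, if necessary, downgrading the newly appearing red edges from $v$ to the blue part of $G_{2r-2,k}$ to blue (which is permissible by the subgraph convention $w'\le w$) gives a coloured subgraph of $G$ isomorphic to $G_{2r,k+1}$, contradicting $\ccF_{2r}$-freeness. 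The inductive hypothesis applied inside $R(v)$ thus yields $e(R(v))\le \tfrac{r-3}{r-2}|R(v)|^2$ for every $v\in V(G)$.

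The concluding step is the global counting, whose structure follows~\cite{EHSS}. Starting from $2e(G)=\sum_v d(v)=\sum_v(2|R(v)|+|B(v)|)$, one combines the Tur\'an bounds on the red subgraph (which is $K_{r-1}$-free) and on the non-green subgraph (which is $K_{2r-1}$-free by $G_{2r,1}$-freeness) with the inductive local bound $e(R(v))\le \tfrac{r-3}{r-2}|R(v)|^2$ and optimises to extract the coefficient $\tfrac{r-2}{r-1}$. The principal obstacle I expect is precisely this optimisation: the pair of Tur\'an bounds alone yields only a coefficient $\tfrac{r-3}{r-2}+\tfrac{2r-3}{2r-2}$ in front of $\binom{n}{2}$, which already exceeds $\tfrac{2(r-2)}{r-1}$ for $r\ge 5$; the inductive control on the neighbourhood weights $e(R(v))$ must therefore be genuinely deployed to close the gap, exactly as in the analogous balancing step in the proof of~\cite{EHSS}*{Lemma~3.3}.
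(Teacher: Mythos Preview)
Your inductive setup is sound: the base cases $r=2,3$ are correct, and the key local observation---that the red neighbourhood $R(v)$ of any vertex in an $\ccF^+_{2r}$-free coloured graph is itself $\ccF^+_{2(r-1)}$-free---is verified correctly. The gap lies precisely where you flag it, and you do not close it. Writing $r_v=|R(v)|$ and $b_v=|B(v)|$, the two Tur\'an bounds give
\[
   2e(G)=\sum_v(2r_v+b_v)\le\Bigl(\tfrac{r-3}{r-2}+\tfrac{2r-3}{2(r-1)}\Bigr)n^2
   =\tfrac{4r^2-15r+12}{2(r-1)(r-2)}\,n^2,
\]
which exceeds the target $\tfrac{2(r-2)}{r-1}n^2=\tfrac{4r^2-16r+16}{2(r-1)(r-2)}n^2$ by $\tfrac{r-4}{2(r-1)(r-2)}n^2$ for $r\ge 5$. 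Your proposed corrective, the inequality $e(R(v))\le\tfrac{r-3}{r-2}r_v^2$, bounds the quantity $\sum_v e(R(v))$, which counts weighted non-green pairs $\{x,y\}$ according to the number of their common red neighbours; there is no identity or inequality on offer linking this back to $\sum_v d(v)$, and you supply none. Since this ``optimisation'' is the entire substance of the lemma, the proposal is incomplete. Your appeal to an ``analogous balancing step'' in~\cite{EHSS}*{Lemma~3.3} is also misplaced: that proof proceeds by Zykov symmetrisation rather than by neighbourhood induction.

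The paper's argument is entirely different and avoids induction. Two rounds of Zykov symmetrisation reduce to coloured graphs with a rigid two-level block structure: a partition $V=A_1\dcup\dots\dcup A_m$ with all cross-edges red, each $A_i$ refined into $k_i$ classes with all cross-edges blue and all internal edges green. The hypotheses then become the arithmetic constraints $m\le r-2$ and $\sum_i k_i\le 2r-1-m$, and the bound $e(G)\le\tfrac{r-2}{r-1}n^2$ follows from Cauchy--Schwarz together with the elementary inequality $\tfrac{k}{k+1}\le\tfrac{k+2}{6}$. This route replaces your unexecuted optimisation by a short explicit calculation.
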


\begin{proof}
	The case $r=2$ is clear, for a $RK_1$ is nothing else than a vertex. 
	So suppose $r\ge 3$ from now on.
	As in~\cite{EHSS}, two consecutive applications of Zykov's symmetrisation method~\cite{Zy}
	show that we may assume that there are is partition $V(G)=A_1\dcup \ldots\dcup A_m$
	and that for each~$i\in [m]$ there is a partition $A_i=B_{i1}\dcup\ldots\dcup B_{ik_i}$
	such that   
	\begin{enumerate}[label=\rmlabel]
		\item\label{it:z1} for $i\in [m]$ and $j\in [k_i]$ all edges within $B_{ij}$ are green;
		\item\label{it:z2} if $i\in [m]$ and $j, j'\in [k_i]$ are distinct, then all edges 
			between $B_{ij}$ and $B_{ij'}$ are blue;
		\item\label{it:z3} and for distinct $i, i'\in [m]$ all edges between $A_i$ and $A_{i'}$
			are red.
	\end{enumerate}
	
	Since $G$ contains neither $RK_{r-1}$ nor $G_{2r, m}$, we have 
		\begin{equation}\label{eq:mki}
		1\le m\le r-2
		\qquad \text{ and } \qquad
		k_1+\ldots+k_m\le 2r-1-m\,.
	\end{equation}
		
	Set $\alpha_i=|A_i|/n$ for $i\in [m]$ and notice that $\sum_{i=1}^m\alpha_i=1$. 
	It is well known that~\ref{it:z1} and~\ref{it:z2}
	imply $e(A_i)\le \frac{k_i-1}{2k_i}|A_i|^2$ and thus it remains to prove 
		\[
		\sum_{1\le i\le m} \tfrac{k_i-1}{2k_i}\alpha_i^2
		+ 2\sum_{1\le i<j\le m} \alpha_i \alpha_j \le \tfrac{r-2}{r-1}\,.
	\]
		Subtracting this from $\bigl(\sum_{i=1}^m\alpha_i\bigr)^2=1$ we get 
		\[
		\sum_{i=1}^m \tfrac{k_i+1}{2k_i}\alpha_i^2\ge \tfrac{1}{r-1}\,.
	\]
		The Cauchy-Schwarz inequality yields 
		\[
		\sum_{i=1}^m \tfrac{k_i+1}{2k_i}\alpha_i^2 \cdot \sum_{i=1}^m \tfrac{2k_i}{k_i+1}
		\ge \bigl(\sum_{i=1}^m\alpha_i\bigr)^2=1
	\]
		and thus it suffices to show that 
		\begin{equation}\label{eq:ki}
		\sum_{i=1}^m \tfrac{k_i}{k_i+1} \le \tfrac{r-1}{2}\,.
	\end{equation}
		
	Since the estimate $\frac{k}{k+1}\le \frac{k+2}{6}$ holds for each positive 
	integer~$k$, it is enough to prove 
		\[
		\sum_{i=1}^m \tfrac{k_i+2}{6} \le \tfrac{r-1}{2}
	\]
		instead and in view of~\eqref{eq:mki} this is clear.	 
\end{proof}

\begin{proof}[Proof of Proposition~\ref{prop:silly}]
	Since $\frac{r-2}{r-1}<\frac{3r-5}{3r-2}$ and $\alpha\ll 1$, we may suppose 
	that $e(G)>\frac{r-2}{r-1}$.
	By Lemma~\ref{lem:Fr+} and the assumption that $G$ be $\ccF_{2r}$-free 
	it follows that $G$ contains a $RK_{r-1}$, say with vertex set 
	$K=\{v_1, v_3, \ldots v_r\}$. The minimum degree condition and $\alpha\ll 1$ yield 
		\[
		\sum_{x\in V(G)}\bigl(2r-2-d_K(x)\bigr)=\sum_{v\in K}\bigl(2n-d(v)\bigr)
		\le \tfrac{(6+\alpha)(r-1)}{3r-2}n<2n
	\]
		and, hence, there is a vertex $v_2\in V(G)$ with $2r-2-d_K(v_2)\le 1$. 
	As $G$ contains no $G_{2r, r}=RK_r$, it follows that $v_2$ has exactly one
	blue neighbour in $K$ and sends red edges to all other members of $K$. By
	symmetry we may suppose that $v_1v_2$ is blue. Set 
	\begin{enumerate}
		\item[$\bullet$] $L=\{v_1, \ldots, v_r\}=K\cup\{v_2\}$,
		\item[$\bullet$] $W_i=\bigl\{x\in V(G)\colon \text{ if $j\in [r]$, then 
			$w(x, v_j)=w(v_i, v_j)$}\bigr\}$ for $i=1, \ldots, r$,
		\item[$\bullet$] $W_0=V(G)\sm (W_1\cup\ldots\cup W_r)$, 
		\item[$\bullet$] and $q(x)=2(3r-2)-2\bigl(w(v_1, x)+w(v_2, x)\bigr)
				-3\bigl(w(v_3, x)+\ldots+w(v_r, x)\bigr)$ for every $x\in V(G)$.
	\end{enumerate}
	Notice that the sets $W_1, \ldots, W_r$ are mutually disjoint.
	Exploiting that $G$ contains neither $G_{2r, r}$ nor $G_{2r, r-1}$ one checks easily
	that
	\begin{enumerate}
		\item[$\bullet$] all edges within one of the partition classes $W_1, \ldots, W_r$ 
			are green
		\item[$\bullet$] no edge from $W_1$ to $W_2$ is red,
		\item[$\bullet$] $q(x)\ge 6$ for all $x\in V(G)$,
		\item[$\bullet$] and that equality holds in the previous bullet if and only if 
			$x\in W_1\dcup\ldots\dcup W_r$.
	\end{enumerate}
	It remains to show that $|W_0|\le \alpha n$. To this end we write
	\[
		|W_0|\le \sum_{x\in V(G)}(q(x)-6)=2(3r-5)n-2\bigl(d(v_1)+d(v_2)\bigr)+
			3\bigl(d(v_3)+\ldots+d(v_r)\bigr) 
	\]
	and apply the minimum degree condition again. 
\end{proof}

Finally, we show the main result of this section. 

\begin{proof}[Proof of Proposition~\ref{prop:eta}]
	Take appropriate constants
		\[
		\delta\ll T_0^{-1} \ll t_0^{-1}, \xi\ll \theta\ll \min(\eta, r^{-1})\,,
	\]
		where $T_0$ is obtained by applying the regularity lemma to $t_0$ and $\xi$, 
	and set $n_0=t_0$. Consider a $K_{2r}$-free graph $G$ on $n\ge n_0$ vertices with 
	$\alpha(G)<\delta n$ and $\delta(G)\ge \frac{3r-5}{3r-2}n$. The regularity 
	lemma yields for some integers $t\in [t_0, T_0]$ and $m\ge 1$ a partition
		\[
		V(G)=V_0\dcup V_1\dcup \ldots \dcup V_t
	\]
		such that $|V_0|\le \xi n$, $|V_1|=\ldots=|V_t|=m$, and for every $i\in [t]$
	all but at most $\xi t$ indices $j\in [t]\sm \{i\}$ have the property that 
	$(V_i, V_j)$ is $\xi$-quasirandom.
	
	Define a coloured graph $H$ with vertex set $[t]$ by declaring a pair $ij$ to 
	be {\it green} if $(V_i, V_j)$ either fails to be $\xi$-quasirandom or has a 
	density smaller than $\theta$, {\it blue} if $(V_i, V_j)$ is $\xi$-quasirandom 
	and has a density in $\bigl[\xi, \tfrac12+\xi\bigr)$, and {\it red} otherwise.
	
	As a consequence of Lemma~\ref{lem:ab}, $H$ is $\ccF_{2r}$-free. Next, we will show 
	that 
		\begin{equation}\label{eq:delH}
		\delta(H)\ge 2\bigl(\tfrac{3r-5}{3r-2}-3\theta\bigr)t\,.
	\end{equation}
		To verify this, we consider an arbitrary vertex $i$ of $H$ and denote
	the numbers of its blue and red neighbours by $a$ and $b$, respectively.
	The minimum degree condition on $G$ yields 
		\[
		\tfrac{3r-5}{3r-2}mn\le \sum_{j=0}^t e(V_i, V_j)\,.
	\]
		On the right side of this estimate, the term corresponding to 
	$j=0$ contributes at most~$\xi mn$, $j=i$ contributes at most $m^2$,
	and the irregular pairs contribute at most $\xi tm^2$. 
	Consequently we have 
		\[
		\bigl(\tfrac{3r-5}{3r-2}-\xi\bigr)mn\le m^2+\xi tm^2+t\theta m^2
		+a\bigl(\tfrac12+\theta)m^2+bm^2. 
	\]
		Using $n\ge mt$ and canceling $m^2$ we infer
		\[
		 \bigl(\tfrac{3r-5}{3r-2}-\xi\bigr) t\le (2\theta+\xi)t+1+\tfrac12 d_H(i)\,.
	\]
		So in view of $t\ge t_0\gg \theta^{-1}$ and $\xi\ll \theta$ we obtain
	$d_H(i)\ge 2\bigl(\tfrac{3r-5}{3r-2}-3\theta\bigr)t$, which proves~\eqref{eq:delH}.
	
	By Proposition~\ref{prop:silly} and $\theta\ll r^{-1}$ there exists a partition 
		\[
		[t]=W_0\dcup W_1\dcup\ldots\dcup W_r
	\]
		such that $|W_0|\le 18\theta rt$, all edges within $W_1, \ldots, W_t$
	are green, and no edge between $W_1$ and $W_2$ is red. 
	For $s\in [0, r]$ we define
		\[
		A^*_s=\bigcup_{i\in W_s}V_i\,. 
	\]
		Then $V(G)=V_0\dcup A^*_0\dcup A^*_1\dcup\ldots \dcup A^*_r$ is a partition 
	of $V(G)$ and 
		\[
		|V_0|+|A^*_0|\le \xi n+|W_0|m\le (\xi+18r\theta)n\le\tfrac 12\eta n\,.
	\]
		
	This means that if we manage to show 
	\begin{enumerate}[label=\alabel]
	\item\label{it:q1} $e(A^*_s)\le\frac 12 \eta n^2$ for all $s\in [r]$,
	\item\label{it:q2} and $e(X_1, X_2)\le \tfrac 12|X_1|X_2|+\frac 12 \eta n^2$ 
		for all $X_1\subseteq A^*_1$ and $X_2\subseteq A^*_2$,
	\end{enumerate}
	then the partition $V(G)=A_1\dcup \ldots\dcup A_r$ defined by 
	$A_1=V_0\dcup A^*_0\dcup A^*_1$ and $A_s=A^*_s$ for $s\in [2, r]$ 
	has both desired properties.
	
	To prove~\ref{it:q1} we start for a given $s\in [r]$ from the decomposition 
		\[
		e(A^*_s)=\sum_{i\in W_s} e(V_i) + \sum_{ij\in W_s^{(2)}} e(V_i, V_j)\,.
	\]
		Here, each of the at most $t$ terms in the first sum is at most $m^2/2$. 
	Besides, there are at most $\xi t^2/2$ terms corresponding to irregular 
	pairs in the second sum, and each of them amounts to no more than $m^2$.
	Finally, the remaining at most $t^2/2$ terms in the second sum correspond to 
	pairs whose density is at most $\theta$. Thus we obtain
		\[
		 e(A^*_s)\le \bigl(\tfrac 1{2t}+\tfrac\xi 2+\tfrac \theta 2\bigr)m^2t^2
	\]
		and due to $t\ge t_0$ and $mt\le n$ an appropriate choice of our constants 
	does indeed guarantee that $e(A^*_s)\le\frac 12 \eta n^2$.
	
	Similarly, the proof of~\ref{it:q2} employs 
		\[
		e(X_1, X_2)=\sum_{i\in W_1}\sum_{j\in W_2}e(V_i\cap X_1, V_j\cap X_2)\,.
	\]
		Again the contribution caused by irregular pairs is at most $\xi n^2/2$.
	The remaining terms correspond to $\xi$-quasirandom pairs, which owing
	to the absence of red edges from $W_1$ to~$W_2$ have density at most $\tfrac 12+\theta$.
	Consequently,
		\begin{align*}
		e(X_1, X_2) &\le \sum_{i\in W_1}\sum_{j\in W_2}
			\Bigl[\bigl(\tfrac 12+\theta\bigr)|V_i\cap X_1||V_j\cap X_2|+\xi |V_i||V_j|\Bigr] 
			+\tfrac 12 \xi n^2 \\
		&\le \bigl(\tfrac 12+\theta\bigr)|X_1||X_2|+ \xi t^2m^2 +\tfrac 12 \xi n^2 \\
		&\le \tfrac 12 |X_1||X_2| + \bigl(\theta+\tfrac 32\xi\bigr) n^2 
		\le \tfrac 12 |X_1||X_2| + \tfrac 12 \eta n^2
	\end{align*}
		and the proof of Proposition~\ref{prop:eta} is complete.
\end{proof}
 
\section{Exact partitions} 
\label{sec:local}

\subsection{More information} It turns out that the lower bound 
$e(G)\ge \tfrac{3r-5}{3r-2}\cdot \tfrac{n^2}2$, which follows from 
the minimum degree condition in Proposition~\ref{prop:eta}, gives 
us further information on the sizes of the vertex classes of the
partition obtained there and on the edge densities between these classes.
This happens due to the following elementary inequality.  

\begin{lemma} \label{lem:ai}
	If for $r\ge 2$ the real numbers $a_1, \ldots, a_r$ sum up to $1$,
	then 
		\[
		\sum_{1\le i<j\le r}a_ia_j-\tfrac12 a_1a_2\le \tfrac{3r-5}{2(3r-2)}\,.
	\]
		Moreover, if for some real $\rho\ge 0$ we have 
		\begin{equation}\label{eq:rho}
		 \sum_{1\le i<j\le r}a_ia_j-\tfrac12 a_1a_2\ge \tfrac{3r-5}{2(3r-2)}-\rho\,,
	\end{equation}
		then $\big|a_i-\frac{2}{3r-2}\big|\le 2\sqrt{\rho}$ for $i=1, 2$ and 
	$\big|a_i-\frac{3}{3r-2}\big|\le 2\sqrt{\rho}$ for $i=3, \ldots, r$.
\end{lemma}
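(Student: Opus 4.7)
The plan is to rewrite the target quantity using $\sum_i a_i = 1$. Squaring this relation gives $2\sum_{i<j} a_i a_j = 1 - \sum_i a_i^2$, so
\[
	\sum_{1\le i<j\le r} a_i a_j - \tfrac12 a_1 a_2
	= \tfrac12\bigl(1 - Q(a)\bigr),
	\quad\text{where}\quad Q(a) := \sum_{i=1}^r a_i^2 + a_1 a_2.
\]
Thus the first assertion is equivalent to $Q(a) \ge \tfrac{3}{3r-2}$, and the hypothesis \eqref{eq:rho} becomes $Q(a) \le \tfrac{3}{3r-2} + 2\rho$.

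The next step is to identify the unique minimiser of $Q$ on the hyperplane $\sum a_i = 1$. A short Lagrange multiplier computation (or inspection guided by the expected sizes of the partition classes) points to $a_1^\ast = a_2^\ast = \tfrac{2}{3r-2}$ and $a_i^\ast = \tfrac{3}{3r-2}$ for $i \ge 3$. Indeed $\nabla Q(a^\ast) = \tfrac{6}{3r-2}\,\mathbf{1}$ is parallel to the constraint's normal, and a direct substitution yields $Q(a^\ast) = \tfrac{3}{3r-2}$.

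The remainder is a Taylor expansion around $a^\ast$. Setting $b_i = a_i - a_i^\ast$, one has $\sum_i b_i = 0$, which annihilates the linear term $\nabla Q(a^\ast) \cdot b$. Since $Q$ is a quadratic form, it follows that
\[
	Q(a) = \tfrac{3}{3r-2} + \sum_{i=1}^r b_i^2 + b_1 b_2
	     = \tfrac{3}{3r-2} + \tfrac12 b_1^2 + \tfrac12 b_2^2
	       + \tfrac12 (b_1 + b_2)^2 + \sum_{i\ge 3} b_i^2,
\]
where the identity $b_1^2 + b_2^2 + b_1 b_2 = \tfrac12(b_1^2 + b_2^2) + \tfrac12(b_1 + b_2)^2$ has been used. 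Every summand on the right is nonnegative, which immediately gives the first assertion. For the stability part, the inequality $Q(a) \le \tfrac{3}{3r-2} + 2\rho$ combined with the same decomposition yields $\tfrac12 b_i^2 \le 2\rho$ for $i \in \{1, 2\}$ and $b_i^2 \le 2\rho$ for $i \ge 3$, both of which give $|b_i| \le 2\sqrt{\rho}$. I do not foresee any real obstacle here; the only step requiring a moment of thought is guessing the extremal point $a^\ast$ correctly, after which everything reduces to a clean quadratic decomposition.
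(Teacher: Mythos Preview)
Your proof is correct and follows essentially the same route as the paper's: both shift by the extremal point $a^\ast$, use $\sum_i b_i=0$ to kill the linear part, and then decompose $\sum_i b_i^2+b_1b_2$ as $\tfrac12 b_1^2+\tfrac12 b_2^2+\tfrac12(b_1+b_2)^2+\sum_{i\ge 3}b_i^2$. The only cosmetic difference is that you frame the identification of $a^\ast$ via Lagrange multipliers and a Taylor expansion, whereas the paper simply writes down the shifted variables and verifies the identity directly.
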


\begin{proof}
	Define 
		\[
		\alpha_i=\begin{cases}	
					a_i-\frac{2}{3r-2} & \text{if $i=1, 2$} \\
					a_i-\frac{3}{3r-2} & \text{if $i=3, \ldots, r$}
				\end{cases}
	\]
		and observe that 
		\[
		\sum_{i=1}^r\alpha_i^2+\alpha_1\alpha_2
		=\sum_{i=1}^r a_i^2+a_1a_2-\sum_{i=1}^r\frac{6a_i}{3r-2}+\frac{4\cdot 3+9(r-2)}{(3r-2)^2}
		=\sum_{i=1}^r a_i^2+a_1a_2-\frac{3}{3r-2}\,.
	\]
		Due to $\bigl(\sum_{i=1}^r a_i\bigr)^2=1$ this rewrites as
		\[
		\tfrac12 \alpha_1^2+\tfrac12 \alpha_2^2 +\tfrac 12(\alpha_1+\alpha_2)^2
			+\sum_{i=3}^{r}\alpha_i^2
		\le \tfrac{3r-5}{3r-2}-\Bigl(2\sum_{i<j}a_ia_j-a_1a_2\Bigr)\,,
	\]
	which establishes the first part of our claim. Moreover, if~\eqref{eq:rho}
	holds for some $\rho\ge 0$ we obtain
		\[
		\tfrac12 \alpha_1^2+\tfrac12 \alpha_2^2 +\tfrac 12(\alpha_1+\alpha_2)^2
			+\sum_{i=3}^{r}\alpha_i^2
		\le 2\rho\,,
	\]
		whence $|\alpha_i|\le 2\sqrt{\rho}$ holds for all $i\in [r]$.
\end{proof}

With this lemma at hand we may prove the following estimates. 

\begin{fakt}\label{fakt:more}
	Suppose that a graph $G$ and the partition 
		\[
		V(G)=A_1\dcup \ldots\dcup A_r
	\]
		are as described and obtained in Proposition~\ref{prop:eta}. Then 
	\begin{enumerate}
		\item[$\bullet$] $\big||A_i|-\frac{2n}{3r-2}\big|\le 2\sqrt{(r+1)\eta}\cdot n$ 
			for $i=1, 2$,
		\item[$\bullet$] $\big||A_i|-\frac{3n}{3r-2}\big|\le 2\sqrt{(r+1)\eta}\cdot n$ for 
			$i=3, \ldots, r$,
		\item[$\bullet$] $e(A_1, A_2)\ge \tfrac12 |A_i||A_j|-r\eta n^2$,
		\item[$\bullet$] and $e(A_i, A_j)\ge |A_i||A_j|-(r+1)\eta n^2$ whenever $1\le i<j\le n$
			and $(i, j)\ne (1, 2)$.
	\end{enumerate}
\end{fakt}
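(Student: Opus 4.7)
The plan is to combine Proposition~\ref{prop:eta} with the lower bound $e(G)\ge\tfrac{3r-5}{3r-2}\cdot\tfrac{n^2}{2}$ coming from the minimum degree assumption, and then to feed the resulting inequality into Lemma~\ref{lem:ai}. Set $a_i=|A_i|/n$ for $i\in[r]$, so $\sum_i a_i=1$. Proposition~\ref{prop:eta}\ref{it:eta1} gives $\sum_{i=1}^r e(A_i)\le r\eta n^2$, Proposition~\ref{prop:eta}\ref{it:eta2} applied with $X_1=A_1$, $X_2=A_2$ gives $e(A_1,A_2)\le \tfrac12|A_1||A_2|+\eta n^2$, and all remaining inter-class edge counts are bounded trivially by $e(A_i,A_j)\le|A_i||A_j|$.

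Summing these contributions and dividing by $n^2$ yields
\[
	\tfrac{3r-5}{2(3r-2)}\le \frac{e(G)}{n^2}\le (r+1)\eta+\Bigl(\sum_{1\le i<j\le r}a_ia_j-\tfrac12 a_1a_2\Bigr),
\]
hence $\sum_{i<j}a_ia_j-\tfrac12 a_1a_2\ge \tfrac{3r-5}{2(3r-2)}-(r+1)\eta$. Invoking Lemma~\ref{lem:ai} with $\rho=(r+1)\eta$ then delivers the first two bullets of the fact.

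For the density bounds I argue by contradiction, exploiting that the chain of inequalities just displayed has total slack at most $(r+1)\eta n^2$. If $e(A_1,A_2)<\tfrac12|A_1||A_2|-r\eta n^2$, then replacing Proposition~\ref{prop:eta}\ref{it:eta2} with this strict inequality in the accounting leaves
\[
	e(G)<\tfrac12|A_1||A_2|+\sum_{(i,j)\ne(1,2)}|A_i||A_j|\le\tfrac{3r-5}{2(3r-2)}n^2
\]
by the first part of Lemma~\ref{lem:ai}, contradicting the lower bound on $e(G)$. An entirely analogous argument handles the fourth bullet: if $e(A_{i_0},A_{j_0})<|A_{i_0}||A_{j_0}|-(r+1)\eta n^2$ for some $(i_0,j_0)\ne(1,2)$, then the deficit $(r+1)\eta n^2$ exactly absorbs the $r\eta n^2$ slack from the intra-class terms together with the $\eta n^2$ slack in Proposition~\ref{prop:eta}\ref{it:eta2}, yielding the same contradiction.

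The only real obstacle is bookkeeping: one must verify that the constant $(r+1)\eta$ entering the hypothesis of Lemma~\ref{lem:ai} is precisely the aggregate slack produced by Proposition~\ref{prop:eta}, so that the square-root conclusion $2\sqrt{(r+1)\eta}\cdot n$ of Lemma~\ref{lem:ai} comes out cleanly. No further ideas are required beyond this accounting.
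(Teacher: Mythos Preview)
Your argument is correct and follows essentially the same route as the paper: both combine the decomposition $e(G)=\sum_i e(A_i)+\sum_{i<j}e(A_i,A_j)$ with Proposition~\ref{prop:eta} and the minimum-degree lower bound on $e(G)$, then invoke Lemma~\ref{lem:ai} with $\rho=(r+1)\eta$. The only cosmetic difference is that the paper packages everything into a single master inequality with nonnegative bracketed terms, from which the size bounds (by dropping the brackets) and the density bounds (by bounding the right side via Lemma~\ref{lem:ai}) are read off directly, whereas you rederive the density bounds by contradiction; the content is identical.
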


\begin{proof}
	The minimum degree condition $\delta(G)\ge\frac{3r-5}{3r-2}n$ yields 
	$e(G)\ge \frac{3r-5}{3r-2}\cdot \frac{n^2}{2}$ and due to 
	Proposition~\ref{prop:eta}\ref{it:eta1} it follows that
		\begin{align}\label{eq:long}
		\notag \bigl(\tfrac{3r-5}{2(3r-2)}&-(r+1)\eta\bigr)n^2+
		\sum_{\substack{1\le i<j\le r \\ (i, j)\ne (1, 2)}} \bigl[|A_i||A_j|-e(A_i, A_j)\bigr]
		+\bigl[\tfrac 12|A_1||A_2|+\eta n^2-e(A_1, A_2)\bigr] \\
		&\le \sum_{1\le i<j\le r} |A_i||A_j| - \tfrac 12|A_1||A_2|\,.
	\end{align}
		
	The square brackets on the left side being positive we deduce 
		\[
		\bigl(\tfrac{3r-5}{2(3r-2)}-(r+1)\eta\bigr)n^2\le 
		\sum_{1\le i<j\le r} |A_i||A_j| - \tfrac 12|A_1||A_2|
	\]
		and the case $\rho=(r+1)\eta$ of Lemma~\ref{lem:ai} leads to the first two bullets.
	
	Furthermore, Lemma~\ref{lem:ai} provides an upper bound of 
	$\tfrac{3r-5}{3r-2}\cdot \frac{n^2}2$ on the right side of~\eqref{eq:long}.
	Therefore we have
		\[
		\sum_{\substack{1\le i<j\le r \\ (i, j)\ne (1, 2)}} \bigl[|A_i||A_j|-e(A_i, A_j)\bigr]
		+\bigl[\tfrac 12|A_1||A_2|+\eta n^2-e(A_1, A_2)\bigr]\le (r+1)\eta n^2\,.
	\]
		and the last two bullets follow as well.
\end{proof}
	
\subsection{Local minimum degree} \label{subsec:local}
Along the way leading from the partition provided by Proposition~\ref{prop:eta} to our 
main theorem we will need to make further efficient uses of the assumption 
$K_{2r}\not\subseteq G$. It should be clear that {\it building} a $K_{2r}$ in $G$
would be easier if we knew that certain minimum degree conditions hold between
the partition classes and the goal of this section is to enforce several such conditions
by moving a few vertices violating them to other classes into which they fit better.
For later reference we include the somewhat lengthy list of properties that we shall obtain
into a definition.

\enlargethispage{2em}

\begin{dfn}\label{dfn:reps}
	Let an integer $r\ge 2$, a real $\eps>0$, an $n$-vertex graph $G$, and a partition 
		\begin{equation*}
		V(G)=B_1\dcup\ldots\dcup B_r
  	\end{equation*}
		be given. Set $d_i(v)=d_{B_i}(v)$ for all $v\in V(G)$ and $i\in [r]$.
	We say that the above partition is {\it $(r, \eps)$-exact}
	if the following conditions hold.
	\begin{enumerate}[label=\glabel]
	\item\label{it:r1} For $i=1,2$ one has $\big||B_i|-\frac{2n}{3r-2}\big|\le \eps n$.
	\item\label{it:r2} For $i=3, \ldots, r$ one has $\big||B_i|-\frac{3n}{3r-2}\big|\le \eps n$. 
	\item\label{it:r3} If $i\in [r]$, then $e(B_i)\le \eps n^2$.  
	\item\label{it:r4} If $X_1\subseteq B_1$ and $X_2\subseteq B_2$, then 
		 $\big|e(X_1, X_2)-\tfrac12 |X_1||X_2|\big|\le \eps n^2$.
	\item\label{it:r5} If $1\le i<j\le r$ and $(i, j)\ne (1, 2)$, 
			then $e(B_i, B_j)\ge |B_i||B_j|-\eps n^2$.
	\item\label{it:r6} If $\{i, j\}=\{1, 2\}$ and $v\in B_i$, then 
		$d_j(v)\ge\frac{1/3}{3r-2}n$.
	\item\label{it:r7} If $i\in \{1, 2\}$, $j\in [3, r]$, and $v\in B_i$, then  
		$d_j(v)\ge\frac{5/3}{3r-2}n$.
	\item\label{it:r8} If $i\in [3, r]$, $j\in \{1, 2\}$, and $v\in B_i$,
		then $d_j(v)\ge\frac{1/5}{3r-2}n$.
	\item\label{it:r9} If $i, j\in [3, r]$ are distinct and $v\in B_i$,
		then $d_j(v)\ge\frac{1}{3r-2}n$.

	\end{enumerate}
\end{dfn}

The main result of this subsection is the following.

\begin{prop} \label{prop:eps}
	For every $r\ge 2$ and $\eps>0$ there exist $n_0\in\NN$ and $\delta>0$
	such that every $K_{2r}$-free graph $G$ on $n\ge n_0$ vertices,
 	with $\delta(G)\ge\frac{3r-5}{3r-2}n$ and $\alpha(G)<\delta n$
	has an $(r, \eps)$-exact partition.
\end{prop}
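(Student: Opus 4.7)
The plan is to start from the partition supplied by Proposition~\ref{prop:eta}, applied with $\eta$ chosen much smaller than $\eps^2/r^2$, and then perform local adjustments by iteratively relocating vertices. Proposition~\ref{prop:eta} and Fact~\ref{fakt:more} together furnish this initial partition $(A_1, \ldots, A_r)$ with all the \emph{global} properties of Definition~\ref{dfn:reps}---the correct class sizes $(\alpha),(\beta)$, sparsity of each class $(\gamma)$, the hereditary half-density between $A_1$ and $A_2$ in $(\delta)$ (the lower bound being extracted from Fact~\ref{fakt:more} by subtracting the upper bounds on the three complementary bipartite pieces), and the near-completeness between the other pairs $(\epsilon)$---with an error of order $\sqrt{r\eta}\, n$, comfortably below $\eps n$. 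What remains is to secure the vertex-wise minimum degree conditions $(\zeta)$--$(\iota)$ without destroying this global structure.

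To this end I would set up an iterative relocation procedure driven by the weighted cross-edge potential
\[
	\Phi(B_1, \ldots, B_r) = \tfrac12\, e(B_1, B_2) + \sum_{\substack{1 \le i < j \le r \\ \{i,j\} \ne \{1,2\}}} e(B_i, B_j),
\]
which is essentially maximised by the extremal configuration. Starting from $(A_1, \ldots, A_r)$, and while any single-vertex relocation strictly increases $\Phi$ by at least a threshold $\alpha n$ (with $\alpha$ a tiny constant depending on $r$ and $\eps$), execute that move. Since by Fact~\ref{fakt:more} the initial $\Phi$ already lies within $O(r\eta)\, n^2$ of its maximum, the total number of moves performed is at most $O(r\eta/\alpha)\, n$; choosing $\alpha$ so that this count is safely below $\eps n/10$ guarantees that $(\alpha)$--$(\epsilon)$ persist throughout, now with buffer $\eps$ in place of $\sqrt{\eta}$.

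At termination the fact that no candidate move improves $\Phi$ by $\alpha n$ translates into concrete inequalities among the degrees $d_j(v)$. For instance, the un-improvability of moving $v \in B_1$ to $B_2$ forces $d_1(v) \le d_2(v) + 2\alpha n$, while that of moving $v \in B_1$ to some $B_k$ with $k \ge 3$ forces $d_k(v) \ge d_1(v) + \tfrac12 d_2(v) - \alpha n$. Combined with the minimum degree $d(v) \ge \tfrac{3r-5}{3r-2}n$ and the class-size bounds $(\alpha),(\beta)$ (which force $\sum_{k\ge 3} d_k(v) \le \tfrac{3(r-2)}{3r-2}n + O(\eps n)$, hence $d_1(v)+d_2(v) \ge \tfrac{1}{3r-2}n - O(\eps n)$), these straightforwardly deliver $(\zeta)$ and $(\iota)$. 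The remaining conditions $(\eta)$ and $(\theta)$, whose rational thresholds ($5/3$ and $1/5$) are \emph{not} readily extracted from the potential analysis, should be enforced by invoking $K_{2r}$-freeness: a violating vertex, through the near-complete bipartite structure of $(\epsilon)$ and the small independence number, would let us locate a $K_{2r-1}$ inside its neighbourhood via a Lemma~\ref{lem:ab}-style argument, thereby producing a forbidden $K_{2r}$ in $G$.

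The main obstacle is this last step: extracting the precise constants $5/3$ and $1/5$ of $(\eta)$ and $(\theta)$ from the $K_{2r}$-free hypothesis. The bare potential argument only yields the coarse lower bound $\tfrac{1/2}{3r-2}n$ for $d_k(v)$, well short of the target $\tfrac{5/3}{3r-2}n$, and closing the gap requires combining the near-complete multipartite structure of $G$ restricted to $B_3 \cup \cdots \cup B_r$ with the small independence number in a careful Tur\'an-type clique embedding inside $N(v)$. Orchestrating the iteration so that it preserves the global conditions while also forcing the exact local constants---possibly through a second refining pass or a more finely tuned potential function that explicitly penalises deviations in $(\eta),(\theta)$---is where the genuine difficulty of Proposition~\ref{prop:eps} lies.
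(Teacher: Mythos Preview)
Your overall strategy---start from the partition of Proposition~\ref{prop:eta}, iterate single-vertex relocations governed by a potential, bound the number of moves so that the global clauses~\ref{it:r1}--\ref{it:r5} survive, and read off the local degree conditions from the termination criterion---is exactly what the paper does. The gap is your choice of potential. With your $\Phi$, the inequality you extract for $v\in B_1$ and $k\ge 3$ is $d_k(v)\ge d_1(v)+\tfrac12 d_2(v)-\alpha n$, and as you yourself note this only secures $d_k(v)\gtrsim \tfrac{1/2}{3r-2}n$, far below the $\tfrac{5/3}{3r-2}n$ required by~\ref{it:r7}. Your fallback plan of using $K_{2r}$-freeness to close this gap is not promising: a vertex $v\in B_1$ with $d_3(v)$ around $\tfrac{1}{3r-2}n$ simply has many neighbours inside the sparse set $B_1$, which does not help you embed a clique in $N(v)$, and the Lemma~\ref{lem:ab} machinery offers no leverage here.

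The paper sidesteps this entirely by choosing a differently weighted potential, namely
\[
	\Omega = 6\,e(B_1)+6\,e(B_2)+\sum_{i=3}^r e(B_i)\,,
\]
to be \emph{minimised}. The asymmetric weights $6$ versus $1$ are the whole point. If~\ref{it:r7} fails, say $v\in B_1$ has $d_3(v)<\tfrac{5/3}{3r-2}n$, then the minimum degree condition together with the class-size bounds forces $d_1(v)\ge\tfrac{1/3}{3r-2}n-O(\eps n)$, and moving $v$ from $B_1$ to $B_3$ drops $\Omega$ by $6d_1(v)-d_3(v)>\tfrac{1/4}{3r-2}n$, contradicting termination. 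The same style of computation handles~\ref{it:r6},~\ref{it:r8}, and~\ref{it:r9}. In particular, no further appeal to $K_{2r}$-freeness is needed after Proposition~\ref{prop:eta} has been invoked. So the missing idea is not a clique-embedding argument but simply the right weighting of the potential; once you use $\Omega$, the ``main obstacle'' you identify dissolves.
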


\begin{proof}
	By monotonicity we may assume that $\eps$ is sufficiently small so that 
	all estimates to be performed below will hold. We commence be choosing a sufficiently
	small $\eta\ll \eps$. With this number $\eta$ we appeal to Proposition~\ref{prop:eta} and it 
	answers with an integer $n_0\in\NN$ and with some $\delta>0$. We claim that these 
	two constants have the desired properties. 
	
	Let any $K_{2r}$-free graph $G$ on $n\ge n_0$ vertices with 
	$\alpha(G)<\delta n$ and $\delta(G)\ge \frac{3r-5}{3r-2}n$ 
	be given and take a partition 
		\begin{equation} \label{eq:A0}
		V(G)=A^0_1\dcup A^0_2\dcup \ldots \dcup A^0_{r}
	\end{equation}
		such that	
	\begin{enumerate}[label=\rmlabel]
	\item\label{it:a1} $e(A^0_i)\le \eta n^2$ for all $i\in [r]$;
	\item\label{it:a2} if $X_1\subseteq A^0_1$ and $X_2\subseteq A^0_2$, 
		then $e(X_1, X_2)\le \frac12 |X_1||X_2|+\eta n^2$.
	\end{enumerate} 
	
	\noindent Due to Fact~\ref{fakt:more} and $\eta\ll\eps$ we may suppose moreover that 
	
	\begin{enumerate}[label=\rmlabel, resume] 
	\item\label{it:a3} for $i=1,2$ we have 
		$\big||A^0_i|-\tfrac{2n}{3r-2}\big|\le\frac 12\eps n$;
	\item\label{it:a4} for $i=3, \ldots, r$ we have 
		$\big||A^0_i|-\tfrac{3n}{3r-2}\big|\le\frac 12\eps n$;
	\item\label{it:a5} $e(A^0_1, A^0_2)\ge \tfrac 12|A^0_1||A^0_2|-\tfrac 14 \eps n^2$;
	\item\label{it:a6} and that $e(A^0_i, A^0_i)\ge |A^0_i||A^0_j|-\tfrac 12 \eps n^2$
		whenever $1\le i<j\le r$ and $(i, j)\ne (1, 2)$.
	\end{enumerate}

	We need to define an $(r, \eps)$-exact partition of $G$. 	
	To this end we perform a recursive procedure, in the course of which 
	a sequence of partitions of~$V(G)$ into $r$ parts is constructed.
	The starting point is~\eqref{eq:A0}. In each step
	only one vertex is moved from one vertex class to another one, while all other 
	vertices stay in the partition class they have belonged to before. Let 
		\[
		 V(G)=A^s_1\dcup A^s_2\dcup \ldots \dcup A^s_{r}
	\]
		be the partition that we have after $s$ steps and put
		\[
		\Omega_s=6e(A^s_1)+6e(A^s_2)+\sum_{i=3}^r e(A^s_i)\,.
	\]
		When the $s^{\text{th}}$ step is to carried out, we ensure that 
		\begin{equation}\label{eq:Omega}
		\Omega_s\le\Omega_{s-1}-\tfrac{1/4}{3r-2}n
	\end{equation}
		holds. This condition guarantees inductively that 
	$\Omega_s\le \Omega_0-\tfrac{s/4}{3r-2}n$ and because of $\Omega_s\ge 0$ 
	this means that at some moment 
	we will run out of permissible steps. When this happens we stop
	the procedure and we let 
		\begin{equation}\label{eq:Bdef}
		V(G)=B_1\dcup B_2\dcup \ldots \dcup B_{r}
	\end{equation}
		be the terminal partition. The remainder of this proof is dedicated
	to proving that this partition is $(r, \eps)$-exact.
	If the above procedure lasted for $t$ steps, then
		\[
		\tfrac{t/4}{3r-2}n\le \Omega_0
		\overset{\text{\ref{it:a1}}}{\le}
		(r+10)\eta n^2
	\]
		informs us that 
		\begin{equation} \label{eq:t}
		t\le 4(3r-2)(r+10)\eta n\le 48r^2\eta n\,.
	\end{equation}
		
	In particular, $\eta\ll \eps\ll 1$ allows us to conclude that 
	$t\le\tfrac12 \eps n$. Since only $t$ vertices were moved during the process,
	it follows from this bound and from~\ref{it:a3} as well as~\ref{it:a4}
	that the clauses~\ref{it:r1} and~\ref{it:r2} of Definition~\ref{dfn:reps}
	are satisfied.  
	
	For fixed $i\in [r]$ the current value of $e(A_i)$ can change by at most $n$
	in every step and thus we have 
		\[
		e(B_i)\le e(A^0_i)+tn\le 49r^2\eta n^2\le \eps n^2
	\]
		by~\ref{it:a1} and~\eqref{eq:t}, which shows the validity of~\ref{it:r3}.
	The proof of~\ref{it:r5} is very similar but uses~\ref{it:a6} instead
	of~\ref{it:a1}. We leave the details to the reader. 
	Proceeding similarly with~\ref{it:a5} one can obtain 
		\begin{equation}\label{eq:b-dense}
		e(B_1, B_2)\ge \tfrac 12|B_1||B_2|-\tfrac 12 \eps n^2\,.
	\end{equation}
		
	Let us continue with~\ref{it:r4}. For any two sets $X_1\subseteq B_1$
	and $X_2\subseteq B_2$ we have 
		\begin{align*}
		e(X_1, X_2) & \le e(X_1\cap A^0_1, X_2, \cap A^0_2)+(|B_1\sm A^0_1|+|B_2\sm A^0_2|)n \\
		& \overset{\text{\ref{it:a2}}}{\le} 
		\tfrac 12 |X_1\cap A^0_1||X_2, \cap A^0_2|+\eta n^2+tn 
	\end{align*}
		and in view of~\eqref{eq:t} it follows that
		\begin{equation}\label{eq:x12}
		e(X_1, X_2)\le \tfrac 12 |X_1||X_2|+\tfrac14 \eps n^2\,.
	\end{equation}
		We still need an estimate in the other direction and for this purpose 
	we invoke~\eqref{eq:b-dense} and make two applications of~\eqref{eq:x12},
	thus getting
		\begin{align*}
		e(X_1, X_2) &=e(B_1, B_2)-e(B_1, B_2\sm X_2)-e(B_1\sm X_1, X_2) \\
		&\ge \bigl(\tfrac 12|B_1||B_2|-\tfrac 12 \eps n^2\bigr)
		-\bigl(\tfrac 12|B_1||B_2\sm X_2|+\tfrac 14 \eps n^2\bigr)
		-\bigl(\tfrac 12|B_1\sm X_1||X_2|+\tfrac 14 \eps n^2\bigr) \\
		& =\tfrac 12|X_1||X_2|- \eps n^2\,.
	\end{align*}
		Altogether the pair $(B_1, B_2)$ behaves indeed as demanded by~\ref{it:r4}.
	
	It remains to deal with the local minimum conditions~\ref{it:r6},~\ref{it:r7},~\ref{it:r8},
	and~\ref{it:r9}. The proofs of all four of them are very similar and rely on the 
	property~\eqref{eq:Omega} of the procedure that was used to generate the 
	partition~\eqref{eq:Bdef}. We will only display the proof~\ref{it:r7} here and leave the 
	three other clauses to the reader.
	
	Assume, for instance, that there is a vertex $v\in B_1$ with $d_3(v)<\tfrac{5/3}{3r-2}n$.
	Due to the minimum degree condition imposed on $G$ we must have
		\[
		d_1(v)\ge \tfrac{3r-5}{3r-2}n -|B_2| - \tfrac{5/3}{3r-2}n -\sum_{i=4}^r |B_i|\,.
	\]
		Because of~\ref{it:r1} and~\ref{it:r2} this implies 
		\[
		d_1(v)\ge \tfrac{1/3}{3r-2}n -(r-2)\eps n\,,
	\]
		wherefore 
		\[
		6d_1(v)-d_3(v)>\tfrac{1/4}{3r-2}n\,.
	\]
		Consequently we can perform a $(t+1)^{\text{st}}$ step of our procedure 
	and move $v$ from $B_1$ to $B_3$. This contradicts the supposed maximality 
	of $t$, and thereby~\ref{it:r7} is proved.  
\end{proof}

\section{Refined edge counting}
\label{sec:refine}

Let us start this section with an elementary lemma, the following.

\begin{lemma}\label{lem:C7}
	Every graph $G$ not containing a cycle of length $3$, $5$, or $7$ satisfies 
		\[
		e(G)\le \alpha(G)^2\,.
	\]
	\end{lemma}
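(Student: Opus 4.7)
The plan is to prove the lemma by induction on $|V(G)|$, with the trivial base case $|V| \le 1$. For the inductive step, fix a vertex $v \in V(G)$ of maximum degree $\Delta = \Delta(G)$ (if $\Delta = 0$ the claim is immediate), write $\alpha = \alpha(G)$, and denote by $N_i = N_i(v)$ the $i$-th BFS layer around $v$ and by $M = N_{\ge 3}(v)$ the union of all BFS layers at distance at least three from $v$.

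First I would establish, by inspecting the short odd cycle through $v$ that a hypothetical edge would create, that $N_1$, $N_2$, $N_3$ are each independent (using the absence of $C_3$, $C_5$, $C_7$, respectively); moreover $\{v\} \cup N_2$ is independent (trivially, since $v$ has no neighbours in $N_2$), and $N_1 \cup I$ is independent for every independent set $I \subseteq M$ (there are no edges between $N_1$ and $M$ by the BFS property). These structural observations give the two crucial inequalities $|N_2| \le \alpha - 1$ and $\alpha(G[M]) \le \alpha - \Delta$.

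Next, since edges of $G$ cannot skip BFS layers and $N_1, N_2, N_3$ are independent, the decomposition
\[
	e(G) = \Delta + \bigl[e(N_1, N_2) + e(N_2, N_3)\bigr] + e(G[M])
\]
holds. The bracketed term equals $\sum_{u \in N_2} d_G(u)$, because the neighbours of any $u \in N_2$ lie in $N_1 \cup N_3$; hence it is at most $|N_2|\,\Delta \le (\alpha - 1)\Delta$. The inductive hypothesis applied to $G[M]$, a strictly smaller graph which inherits the forbidden-cycle conditions, yields $e(G[M]) \le \alpha(G[M])^2 \le (\alpha - \Delta)^2$. Summing,
\[
	e(G) \le \Delta + (\alpha - 1)\Delta + (\alpha - \Delta)^2 = \alpha\Delta + (\alpha - \Delta)^2 = \alpha^2 - \Delta(\alpha - \Delta) \le \alpha^2,
\]
where the last inequality uses $\Delta \le \alpha$ (valid since $N_1$ is an independent set of size $\Delta$).

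The main obstacle I anticipate is the verification that $N_3$ is independent. This requires a case analysis on whether the BFS predecessors (in $N_2$ and $N_1$) of two hypothetically adjacent vertices in $N_3$ coincide, with each configuration producing either a triangle, a $C_5$, or a $C_7$ through $v$. The rest of the argument is a routine calculation that hinges only on picking $v$ of maximum degree, so that the bound $\sum_{u \in N_2} d_G(u) \le |N_2|\Delta$ interacts correctly with the inductive bound $\alpha(G[M]) \le \alpha - \Delta$.
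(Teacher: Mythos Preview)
Your argument is correct. The independence of $N_1$, $N_2$, $N_3$ follows exactly as you outline (the case analysis for $N_3$ works: distinct BFS predecessors in different layers automatically give distinct vertices, so the closed walk of length $7$ is a genuine $C_7$), and the edge decomposition together with the arithmetic is clean.

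Your route, however, is genuinely different from the paper's. The paper does not induct; instead it greedily selects vertices $z_1,\ldots,z_k$ that are pairwise at distance at least~$4$ in $G$, each $z_i$ having maximum $G$-degree among the vertices not yet within distance~$3$ of a previous $z_j$. The resulting partition $V(G)=W_1\dcup\ldots\dcup W_k$ into balls of radius~$3$ satisfies $|W_i|\le 2\alpha(G)$ (each $W_i$ splits into two independent sets by parity of distance to $z_i$) and $\sum_i d_G(z_i)\le\alpha(G)$ (the neighbourhoods $N(z_i)$ are disjoint and their union is independent, since the $z_i$ are far apart). Then $2e(G)=\sum_x d(x)\le\sum_i|W_i|\,d(z_i)\le 2\alpha(G)^2$. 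Your induction, when unrolled, removes balls of radius~$2$ rather than~$3$ and measures degrees in the residual graph rather than in $G$; the saving you extract from the ball around $v$ is $\alpha\Delta$ via $|N_2|\le\alpha-1$ and $\sum_{u\in N_2}d(u)\le|N_2|\Delta$, whereas the paper extracts $2\alpha\,d(z_i)$ per ball via the two-independent-sets decomposition. Both proofs rest on the same structural fact that the first few BFS layers are independent, but your packaging is a bit more elementary and avoids the somewhat delicate observation that $\bigcup_i N(z_i)$ is independent.
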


\begin{proof}
	We construct recursively a sequence $z_1, \ldots, z_k$ of distinct vertices of $G$
	according to the following rules.
	\begin{enumerate}
	\item[$\bullet$] Let $z_1$ be any vertex of $G$ whose degree is maximal.
	\item[$\bullet$] If at some moment the vertices $z_1, \ldots, z_i$ have already 
		been selected, we ask ourselves whether the set $Q_i$ of all vertices 
		having a distance of at least four from all of them is empty or not.
	\item[$\bullet$] If $Q_i=\varnothing$, we set $k=i$ and terminate the procedure.
	\item[$\bullet$] Otherwise we take a vertex $z_{i+1}\in Q_i$ whose degree is as 
		large as possible.
	\end{enumerate}
	Set $Q_0=V(G)$ and $W_i=Q_{i-1}\sm Q_i$ for $i=1, \ldots, k$. Notice that 
		\[
		V(G)=W_1\dcup \ldots\dcup W_k
	\]
		is indeed a partition, because $Q_0\supseteq Q_1\supseteq \dots \supseteq Q_k=\varnothing$.
	Owing to the maximum degree conditions imposed on the vertices $z_i$ we have 
		\begin{equation} \label{eq:egw}
		2e(G)=\sum_{x\in V(G)}d(x)\le \sum_{i=1}^{k} |W_i|\cdot d(z_i)\,.
	\end{equation}
		We contend that for $i\in [k]$ every vertex $x\in W_i$ has at most 
	distance three from $z_i$. To see this we remark that due to $x\not\in Q_i$
	there has to be an index $j\in [i]$ such that $x$ has distance at most three from $z_j$.
	Moreover, $j<i$ would yield $x\not\in Q_{i-1}$, contrary to $x\in W_i$. Thus we must 
	have $j=i$, as desired.
	
	It follows that we can partition $W_i$ into a set of vertices having distance $0$ or $2$
	from $z_i$ and a set of vertices having distance $1$ or $3$ from $z_i$. Both 
	partition classes are independent sets, for otherwise $G$ would contain an odd 
	cycle of length $3$, $5$, or $7$. 
	
	In particular, we have $|W_i|\le 2\alpha(G)$ for each $i\in [k]$ and in view 
	of~\eqref{eq:egw} we obtain
		\[
		e(G)\le \alpha(G)\sum_{i=1}^k d(z_i)\,.
	\]
		
	Due to their construction any two of the vertices $z_1, \ldots, z_k$ have a distance 
	of at least four. Therefore, their neighbourhoods are mutually disjoint and taken 
	together they form an independent set. Thus we have indeed $e(G)\le \alpha(G)^2$.
\end{proof}

After this little distraction we resume our task of proving Theorem~\ref{thm:main}.
In the light of the work in the two previous sections, it seems desirable to deal
with the case that $G$ admits an exact partition, which will occupy the 
remainder of the present section.

\begin{prop}~\label{prop:exact} 
	Given an integer $r\ge 2$, there exists a real $\eps>0$ such that for 
	every $\delta\le \eps$ every $n$-vertex graph $G$ with $K_{2r}\not\subseteq G$ and 
	$\alpha(G)\le \delta n$ admitting an $(r, \eps)$-exact partition of its vertex set 
	has at most $\bigl(\frac{3r-5}{3r-2}+\delta-\delta^2\bigr)\frac{n^2}{2}$ edges.
\end{prop}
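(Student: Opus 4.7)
The plan is to execute the blueprint sketched in Section~\ref{sec:even-intro}. Starting from the identity $2e(G)=\sum_{i=1}^r e(B_i,V)$, I would establish separately the bounds~\eqref{eq:24} for $i\in\{1,2\}$ and~\eqref{eq:25} for $i\in[3,r]$, add them up, and invoke Lemma~\ref{lem:ai} (applied with $|B_i|/n$ in place of $a_i$) to bound the resulting quadratic in $|B_1|,\ldots,|B_r|$ by $\bigl(\tfrac{3r-5}{3r-2}+\delta-\delta^2\bigr)n^2$. The near-extremal partition sizes guaranteed by~\ref{it:r1} and~\ref{it:r2} ensure that the optimisation actually attains the claimed value.

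For $i\in[3,r]$ I would follow the hint from Section~\ref{sec:even-intro} and partition $B_i=B_i^+\dcup B_i^-$, where $B_i^-$ collects those $v\in B_i$ missing a definite fraction of their potential cross-edges to $V\setminus B_i$ and $B_i^+$ is the rest. The crux is that $G[B_i^+]$ must be triangle-free: otherwise a triangle $xyz\subseteq B_i^+$ together with the near-completeness~\ref{it:r5} between different $B_j$ and the local minimum-degree bounds~\ref{it:r7}--\ref{it:r9} would enable me to pick, greedily, two further vertices inside each $B_j$ with $j\neq i$ — using the hereditary half-density of~\ref{it:r4} between $B_1$ and $B_2$ to secure an edge in their common neighbourhood — and thereby embed a $K_{2r}$. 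Consequently $e(B_i^+)\le\tfrac12\delta n\,|B_i^+|$ by $\alpha(G)<\delta n$, while the saved cross-edges on $B_i^-$ compensate for the weaker bound $e(B_i^-)\le\eps n^2$ from~\ref{it:r3}. Combined with $e(B_i,V\setminus B_i)\le|B_i|(n-|B_i|)$ this yields~\eqref{eq:25}.

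The hard case is $i\in\{1,2\}$; by the symmetry of~\ref{it:r4} it suffices to treat $i=1$. I would introduce $S=\{v\in B_1:d_2(v)\ge\tfrac12|B_2|+\tau n\}$ for a small absolute constant $\tau$, the analogue of the vertices of $B_1$ complete to $B_2$ in the extremal construction of Proposition~\ref{prop:24}. The two key structural facts to prove are \emph{(i)} that no edge joins $S$ to $B_1$, since any such edge $sv$ would force $N(s)\cap N(v)\cap B_2$ to be too large and, combined with~\ref{it:r5} and~\ref{it:r7}, close a $K_{2r}$; and \emph{(ii)} that $G[B_1]$ contains no $C_3$, $C_5$, or $C_7$, mirroring the odd-girth property of the Bollob\'as--Erd\H{o}s graph noted in Section~\ref{sec:constructions}. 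For~\emph{(ii)} I would argue that a short odd cycle in $B_1$ together with the hereditary half-density across $(B_1,B_2)$ from~\ref{it:r4} produces, via repeated joint-neighbourhood refinements in $B_2$, a configuration that extends through the almost-complete bipartite graphs to $B_3,\ldots,B_r$ into a $K_{2r}$. Fact~\emph{(ii)} combined with Lemma~\ref{lem:C7} then gives $e(B_1)\le\alpha(G)^2\le\delta^2 n^2$, whereas fact~\emph{(i)} together with~\ref{it:r4} lets me analyse $e(B_1,B_2)$ precisely: the at most $\delta n$ vertices of $S$ contribute roughly $\tfrac12|B_2|+\tau n$ edges each, but since they are isolated from the rest of $B_1$ their effect on $e(B_1)$ is nil, giving the saving $-\tfrac12\delta^2 n^2$ that appears in~\eqref{eq:24}. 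Putting this together with the trivial $e(B_1,V\setminus(B_1\cup B_2))\le|B_1|(n-|B_1|-|B_2|)$ yields~\eqref{eq:24}.

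I expect the main obstacle to be fact~\emph{(ii)}: extracting enough quasirandom-like embedding power from the purely hereditary density condition~\ref{it:r4} to build a $K_{2r}$ around any copy of $C_3$, $C_5$, or $C_7$ inside $B_1$. This is where the full strength of the $(r,\eps)$-exact partition — in particular the carefully chosen local degree thresholds~\ref{it:r6}--\ref{it:r9} — has to be deployed via a greedy embedding that alternates between exploiting the hereditary half-density across $(B_1,B_2)$ and the near-completeness between all other pairs $(B_i,B_j)$. Once the two facts are in hand, deducing~\eqref{eq:24} and concluding via the optimisation based on Lemma~\ref{lem:ai} is a careful but essentially routine accounting exercise.
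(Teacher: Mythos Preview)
Your plan tracks the overview in Section~\ref{sec:even-intro} faithfully, but that overview explicitly warns that it assumes every vertex of $B_1$ sends at least $\tfrac12|B_2|-\tfrac{1}{60}n$ edges to $B_2$; you have taken this simplification as a fact. Your assertion~(ii) that $G[B_1]$ contains no $C_3$, $C_5$, or $C_7$ is too strong, and your proposed proof of it via embedding cannot work: the hereditary density condition~\ref{it:r4} says nothing about individual vertex degrees, so a triangle $uvw\subseteq B_1$ whose vertices each have $d_2$ near the minimum $\tfrac{1/3}{3r-2}n$ allowed by~\ref{it:r6} need not have any common neighbour in $B_2$, and the bound~\ref{it:r7} is likewise too weak to force a large common neighbourhood in any $B_j$ with $j\ge 3$. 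The paper deals with this by first setting aside the exceptional set $P\subseteq B_1$ of vertices with small degree into $V\setminus B_1$; the short-odd-cycle property is then proved only for $B_1\setminus P$, and not by embedding but by a double-counting of $e(C,B_2)$ that pits the lower bound on $d_2$ available outside $P$ against the upper bound on joint neighbourhoods from Claim~\ref{clm:cn}.

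There is a second, independent gap in your accounting of $e(B_1,B_2)$. A single threshold for $S$ cannot do both jobs you assign it: if $\tau$ is large enough that fact~(i) follows from~\ref{it:r6} and Claim~\ref{clm:cn}, then the residual bound $d_2(v)\le\tfrac12|B_2|+\tau n$ for $v\notin S$ loses a term of order $\tau n|B_1|\gg\delta n^2$; if instead $\tau=O(\delta)$, fact~(i) fails. The paper resolves this with a three-way split $Q\dcup R\dcup S$ of $B_1\setminus P$ using two thresholds, the sharp one $\tfrac12(|B_2|+\delta n)$ separating $Q$ from $R$ and a coarse one separating $R$ from $S$. A separate fact shows $R\cup S$ is independent, whence $|R|\le\delta n-|S|$; this produces an extra saving $-2\delta n(\delta n-|S|)$ in the bound for $e(B_1\setminus P,B_2)$, which then cancels exactly against $2e(Q\cup R)\le 2(\delta n-|S|)^2$ coming from Lemma~\ref{lem:C7} applied with $\alpha(Q\cup R)\le\delta n-|S|$. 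It is this cancellation, not simply $e(B_1)\le\delta^2 n^2$, that yields the $-\tfrac12\delta^2 n^2$ term in~\eqref{eq:24}.
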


\begin{proof}
	Throughout the arguments that follow we will assume that $\eps$ has been chosen 
	so small that all estimates encountered below hold. Now let $\delta\le \eps$, 
	let $G=(V, E)$ be a $K_{2r}$-free graph on $n$ vertices with $\alpha(G)<\delta n$ 
	and let 
	\[
		V = B_1\dcup \ldots \dcup B_r
	\]
	be an $(r, \eps)$-exact partition of $G$. By lowercase greek letters enclosed 
	in parentheses such as~\ref{it:r1}, \dots,~\ref{it:r9} we shall always mean the 
	corresponding clauses of Definition~\ref{dfn:reps}.  
	
	The statement that follows will often be useful in conjunction with the hypothesis 
	that~$G$ be $K_{2r}$-free.
	
	\begin{clm}\label{clm:I}
		Suppose that $I\subseteq [r]$ and that
		for every $i\in I$ we have a set $X_i\subseteq B_i$ 
		with ${|X_i|\ge \frac{1/15}{3r-2}n}$.
		Then the set $X=\bigcup_{i\in I}X_i$ contains a clique of order $2|I|-1$.
		
		Moreover, if $I$ does not contain both of $1$ and $2$, than $X$ does even 
		contain a clique of order $2|I|$.
	\end{clm}
	
	\begin{proof}
		Let us begin with the ``moreover''-part.
		Intending to apply Lemma~\ref{lem:ab} with $\theta=\frac 12$ and $a=b=|I|$  
		we need to check that for distinct $i, j\in I$ the pair $(X_i, X_j)$
		is {$(16^{-r}, 1)$-dense} and that $\alpha(G)<|X_i|/4^r$.
		The latter is an immediate consequence of $\delta\le \eps\ll 1$. 
		Moreover, if $Y_i\subseteq X_i$ and $Y_j\subseteq X_j$, then 
				\[
			e(Y_i, Y_j)  
			\overset{\text{\ref{it:r5}}}{\ge}
			|Y_i||Y_j|-\eps n^2
			\ge |Y_i||Y_j| - 16^{-r} |X_i| |X_j|\,,
		\]
				as desired. If $1, 2\in I$ we can still apply Lemma~\ref{lem:ab} with 
		$\theta=\frac 12$, but this time with $a=|I|$ and $b=|I|-1$. 
		This is because~\ref{it:r4} allows us to show, in the same way as above, 
		that the pair $(X_1, X_2)$ is $(1/16^r, 1/2)$-dense.  
	\end{proof}
	
	Next we explain how condition~\ref{it:r3} is utilised. 
	
	\begin{clm} \label{clm:ex}
		If $i\in [r]$ and $X\subseteq B_i$, then $e(X)\le \frac{n/60}{3r-2}|X|$. 
	\end{clm}
	
	\begin{proof}
		If $|X|\le \frac{n/60}{3r-2}$ this follows from the trivial bound $e(X)\le |X|^2$.
		On the other hand, if $|X|\ge \frac{n/60}{3r-2}$, then we have 
		\[
			e(X)
			\overset{\text{\ref{it:r3}}}{\le}
			\eps n^2\le \bigl(\tfrac{n/60}{3r-2}\bigr)^2 \le \tfrac{n/60}{3r-2}|X|
		\]
		due to $\eps\ll 1$.
	\end{proof}
	
	\begin{clm} \label{clm:eBi}
		For each $i\in [3, r]$ we have 
				\[
			e(B_i, V)\le (n-|B_i|)|B_i| +\delta n|B_i|\,.
		\]
			\end{clm}
	
	\begin{proof}
		Look at the partition $B_i=B_i^+\dcup B_i^-$ defined by 
				\[
			B_i^+=\Bigl\{x\in B_i\colon |N(x)\sm B_i|\ge n-|B_i|-\tfrac{n/15}{3r-2}\Bigr\}
		\]
				and $B_i^-=B_i\sm B_i^+$. Clearly, we have 
				\begin{equation}\label{eq:ebi1}
			e(B_i, V\sm B_i)\le (n-|B_i|)|B_i|-\tfrac{n/15}{3r-2}|B_i^-|
		\end{equation}
		and Claim~\ref{clm:ex} yields 
				\begin{equation}\label{eq:ebi2}
			e(B_i^-)\le \tfrac{n/60}{3r-2}|B_i^-|\,.
		\end{equation}
				
		Now assume for the sake of contradiction that $B_i$ contains a triangle 
		$uvw$ two of whose vertices, say $v$ and $w$, belong to $B_i^+$.
		Let $X$ denote the common neighbourhood of $u$, $v$, and $w$.
		The definition of $B_i^+$ leads to 
				\[
			|X\cap B_j|\ge |N(u)\cap B_j|-\tfrac{2/15}{3r-2}n
			\overset{\text{\ref{it:r9}}}{\ge}
			\tfrac{13/15}{3r-2}n
		\]
				for $j\in [3, r]\sm\{i\}$ and, similarly, we have $|X\cap B_j|\ge \tfrac{1/15}{3r-2}n$
		for $j=1, 2$ due to~\ref{it:r8}. Thus the assumptions of Claim~\ref{clm:I}
		are satisfied by $I=[r]\sm\{i\}$ and $X$, meaning that $X$ contains a~$K_{2r-3}$.
		But together with the triangle $uvw$ this clique gives us a $K_{2r}$ in $G$, 
		which is absurd. 
		
		This proves that there are no such triangles in $B_i$ and due to $\alpha(G)<\delta n$
		it follows that no vertex in~$B_i$ can have more than $\delta n$ neighbours 
		in $B_i^+$. Therefore we have $e(B_i^+, B_i^-)\le \delta n|B_i^-|$
		and $2e(B_i^+)\le \delta n|B_i^+|$. Taking~\eqref{eq:ebi1} and~\eqref{eq:ebi2}
		into account we can now deduce
				\begin{align*}
			e(B_i, V) &= e(B_i, V\sm B_i)+2e(B_i^+)+2e(B_i^+, B_i^-)+2e(B_i^-) \\
			&\le (n-|B_i|)|B_i| +\delta n |B_i| 
			+\bigl(\tfrac{n/30}{3r-2}+\delta n-\tfrac{n/15}{3r-2}\bigr)|B_i^-|\,,
		\end{align*}
				and in view of $\delta\ll 1$ the desired estimate follows.
	\end{proof}
	
	Before we proceed deriving similar upper bounds for $e(B_1, V)$ and $e(B_2, V)$,
	we record some useful properties of the common neighbourhoods of edges in $B_1$.
	
	\begin{clm} \label{clm:cn}
		Any two vertices $u, v\in B_1$ forming an edge have at least $\frac{4/15}{3r-2}n$
		common neighbours in each of $B_3, \ldots, B_r$, but less than $\frac{1/15}{3r-2}n$ 
		common neighbours in $B_2$.
	\end{clm}
	
	\begin{proof}
		For each $i\in [3, r]$ we have 
				\[
			|N(u)\cap N(v)\cap B_i|\ge |N(u)\cap B_i|+|N(v)\cap B_i|-|B_i|\,,
		\]
				which due to~\ref{it:r2} and~\ref{it:r7} yields
				\[
			|N(u)\cap N(v)\cap B_i|\ge
			\tfrac{10/3}{3r-2}n-\bigl(\tfrac{3}{3r-2}+\eps\bigr)n
			\ge \tfrac{4/15}{3r-2}n\,,
		\]
				as desired.
		If $u$ and $v$ had at least $\frac{1/15}{3r-2}n$ common neighbours in $B_2$,
		we could use Claim~\ref{clm:I} with $I=[r]\sm \{1\}$ to find a 
		$K_{2r-2}$ among the common neighbours of those two vertices, 
		contrary to $K_{2r}\not\subseteq G$.
	\end{proof}
	
	\begin{clm} \label{clm:eB1}
		For $i\in \{1, 2\}$ we have 
				\[
			e(B_i, V)\le |B_i|(n-|B_1|-|B_2|)+\tfrac12 |B_1||B_2|
				+\tfrac 12\delta n (|B_1|+|B_2|)-\tfrac 12\delta^2n^2\,.
		\]
			\end{clm}
	
	\begin{proof}
		Due to symmetry it suffices to prove this for $i=1$ only.
		The vertices in 
				\begin{equation} \label{eq:P}
			P=\Bigl\{ x\in B_1\colon 
				|N(x)\sm B_1|\le n-|B_1|-\tfrac12 |B_2|-\tfrac{1/15}{3r-2}n\Bigr\}
		\end{equation}
				receive special treatment.
		
		\begin{fact} \label{fact:P}
			There is no triangle in $B_1$ two of whose vertices
			are outside~$P$. 
		\end{fact}
		
		\begin{proof}
			Arguing indirectly we assume that $uvw$ is such a triangle.
			By Claim~\ref{clm:cn} no two of the three vertices $u$, $v$, and $w$ can have 
			$\frac{1/15}{3r-2}n$ common neighbours in $B_2$, whence
						\[
				d_2(u)+d_2(v)+d_2(w) < |B_2|+\tfrac{1/5}{3r-2}n\,.
			\]
						On the other hand, by the definition of $P$ we have 
			$d_2(x) > \tfrac12 |B_2|-\frac{1/15}{3r-2}n$ for every $x\in B_1\sm P$
			and together with~\ref{it:r6} this yields 
						\[
				d_2(u)+d_2(v)+d_2(w) > 
				2\bigl(\tfrac12 |B_2|-\tfrac{1/15}{3r-2}n\bigr)+\tfrac{1/3}{3r-2}n
				= |B_2|+\tfrac{1/5}{3r-2}n\,.
			\]
						This contradiction proves Fact~\ref{fact:P}.
		\end{proof}
		
		Since $\alpha(G)<\delta n$, it follows that no vertex in $P$ can have $\delta n$
		neighbours in $B_1\sm P$, which in turn reveals $e(P, B_1\sm P)\le \delta n |P|$.
		Together with the estimate $e(P)\le \frac{n/60}{3r-2}|P|$, which follows from
		Claim~\ref{clm:ex}, this gives 
				\[
			2e(P, B_1\sm P)+2e(P) \le \bigl(2\delta+\tfrac{1/30}{3r-2}\bigr)n|P|
			\le \tfrac{1/15}{3r-2}n|P|
		\]
				and by adding the upper bound on $e(P, V\sm B_1)$ that trivially follows 
		from~\eqref{eq:P} we arrive at 
				\begin{equation} \label{eq:eP}
			e(P, V)+e(P, B_1\sm P)\le |P|(n-|B_1|-\tfrac 12|B_2|)\,.
		\end{equation}
				
		\begin{fact} \label{fact:C7}
			There is no $C_3$, $C_5$, or $C_7$ in $G$ all of whose vertices are in $B_1\sm P$.
		\end{fact}
		
		\begin{proof}
			Assume contrariwise that for some $\ell\in \{3, 5, 7\}$ the vertices 
			in $C=\{v_1, \ldots, v_\ell\}$ form such a cycle. 
			If a vertex $x\in B_2$ is adjacent to $q$ vertices in $C$, 
			then the neighbourhood of~$x$ contains at least $q-\tfrac 12(\ell-1)$ 
			edges of this cycle, whence 
						\[
				e(C, B_2)=\sum_{x\in B_2}d_C(x)\le \tfrac 12(\ell-1)|B_2|+t\,,
			\]
						where $t$ denotes the number of triangles formed by a vertex in $B_2$ 
			and an edge of the cycle. Further, by the second part of Claim~\ref{clm:cn},
			each edge of the cycle can sit in at most $\tfrac{1/15}{3r-2}n$
			such triangles, wherefore $t\le \tfrac{7/15}{3r-2}n$.
			
			On the other hand, each $v\in C$ has at least 
			$\tfrac12 |B_2|-\frac{1/15}{3r-2}n$ neighbours in $B_2$ due to $C\subseteq B_1\sm P$
			and~\eqref{eq:P}, whence
						\[
				e(C, B_2)=\sum_{k=1}^\ell d_2(v_k)\ge \tfrac12 \ell |B_2|-\tfrac{7/15}{3r-2}n\,.
			\]
						By combining all these estimates we infer
						\[
				|B_2|\le \tfrac{28/15}{3r-2}n\,,
			\]
						which, however, violates~\ref{it:r1}. 
			This concludes the proof of Fact~\ref{fact:C7}.
		\end{proof}
		
		Now consider the partition 
				\[
			B_1\sm P = Q\dcup R\dcup S
		\]
				defined by 
				\begin{align*}
			Q &= \Bigl\{ x\in B_1\sm P\colon  d_2(x)\le \tfrac12 (|B_2|+\delta n) \Bigr\}\,, \\
			R &= \Bigl\{ x\in B_1\sm P\colon  \tfrac12 (|B_2|+\delta n)<d_2(x)\le 
				\tfrac{7/4}{3r-2}n \Bigr\}\,, \\
			\text{ and } \quad
			S &= \Bigl\{ x\in B_1\sm P\colon  \tfrac{7/4}{3r-2}n< d_2(x) \Bigr\}\,.
		\end{align*}
				\begin{fact} \label{fact:S}
			There is no edge connecting a vertex in $S$ with a vertex in $B_1$.
		\end{fact}
		
		\begin{proof}
			By~\ref{it:r6} and the definition of $S$ the common neighbourhood of 
			such an edge would intersect $B_2$ in at least
						\[
				 \tfrac{7/4}{3r-2}n+\tfrac{1/3}{3r-2}n-|B_2|
				 \overset{\text{\ref{it:r1}}}{\ge}
				 \tfrac{1/15}{3r-2}n
			\]
						vertices, contrary to Claim~\ref{clm:cn}.
		\end{proof}
		
		\begin{fact} \label{fact:R}
			The set $R\cup S$ is independent.
		\end{fact}
		
		\begin{proof}
			Assume that we have an edge $uv$ both of whose endvertices are in $R\cup S$.
			According to the definitions of $R$ and $S$, the common neighbourhood $J$
			of $u$ and $v$ has at least $\delta n$ vertices in $B_2$ and by 
			$\alpha(G)<\delta n$ there exists an edge $xy$ in $B_2\cap J$.
			
			We will now try to construct a $K_{2r-4}$ in the common neighbourhood 
			$J_*\subseteq J$ of $u$, $v$, $x$, and $y$, which would give a contradiction
			to $K_{2r}\not\subseteq G$. To this end we utilise Claim~\ref{clm:I}
			with $I=[r]\sm \{1, 2\}$ and it remains to show that we have 
			$|B_j\cap J_*|\ge \tfrac{1/15}{3r-2}n$ for every $j\in [3, r]$.
			
			Thanks to Claim~\ref{clm:cn} we already know that $x$ and $y$ have 
			at least $\tfrac{4/15}{3r-2}n$ common neighbours in each $B_j$
			with $j\in [3, r]$, so it suffices to prove 
			$|B_j\cap J|\ge |B_j|-\tfrac{1/5}{3r-2}n$
			instead. For this purpose it is enough to establish 
						\[\tag{$\star$}
				|J\sm (B_1\cup B_2)|\ge n-(|B_1|+|B_2|)-\tfrac{1/5}{3r-2}n\,. 
			\]
						
			Now due to $u, v\in B_1\sm P$ and~\eqref{eq:P} we have 
						\[
				|J\sm B_1|\ge 2\bigl(n-|B_1|-\tfrac 12 |B_2|-\tfrac{1/15}{3r-2}n\bigr)
					-(n-|B_1|)=n-|B_1|-|B_2|-\tfrac{2/15}{3r-2}n
			\]
						and Claim~\ref{clm:cn} tells us that 
						\[
				|J\cap B_2|\le \tfrac{1/15}{3r-2}n\,.
			\]
						It is easily seen that the last two estimates imply~$(\star)$.
		\end{proof}
		
		We will now work towards an upper bound on $e(B_1\sm P, B_2)$. Due to the 
		definitions of~$Q$,~$R$, and~$S$ we have
				\begin{align*}
			e(B_1\sm P, B_2)
			&\le |Q|\cdot\tfrac12(|B_2|+\delta n)+|R|\cdot \tfrac{7/4}{3r-2}n +|S||B_2|\\
			&\overset{\text{\ref{it:r1}}}{\le}
			(|Q|+|R|)\cdot\tfrac12(|B_2|+\delta n)+|R|\cdot \tfrac{4/5}{3r-2}n+|S||B_2|\,.
		\end{align*}
				According to Fact~\ref{fact:R} and $\alpha(G)<\delta n$ we have $|R|\le \delta n-|S|$
		and thus we arrive at
				\begin{align*}
			e(B_1\sm P, B_2)
			&\le \tfrac 12|B_1\sm P|(|B_2|+\delta n)+(\delta n-|S|)\tfrac{4/5}{3r-2}n
				+\tfrac 12 |S|(|B_2|-\delta n) \\
			&= \tfrac 12|B_1\sm P||B_2|+\tfrac 12 \delta n(|B_1\sm P|+|B_2|)
				-\tfrac 12\delta^2n^2 \\
			&\qquad\qquad    +(\delta n-|S|)
				\bigl(\tfrac{4/5}{3r-2}n+\tfrac 12\delta n-\tfrac 12|B_2|\bigr)\,.
		\end{align*}
				Employing~\ref{it:r1} we may weaken this to 
				\begin{equation} \label{eq:BB}
			e(B_1\sm P, B_2)\le \tfrac 12|B_1\sm P||B_2|+\tfrac 12 \delta n(|B_1|+|B_2|)
				-\tfrac 12\delta^2n^2 -2\delta n(\delta n-|S|)\,.
		\end{equation}
				
		Next we learn from Lemma~\ref{lem:C7} and Fact~\ref{fact:C7} that 
		$e(Q\cup R)\le \alpha(Q\cup R)^2$, where $\alpha(Q\cup R)$, the size 
		of the largest independent set in $Q\cup R$, is at most $\delta n-|S|$
		due to Fact~\ref{fact:S} and $\alpha(G)<\delta n$. So in other words we have
		$e(Q\cup R)\le (\delta n-|S|)^2\le \delta n (\delta n-|S|)$. A further application 
		of Fact~\ref{fact:S} leads to the seemingly stronger inequality 
		$e(B_1\sm P)\le \delta n (\delta n-|S|)$ and together with~\eqref{eq:BB} 
		this yields 
				\[
			e(B_1\sm P, B_1\cup B_2\sm P)\le \tfrac 12|B_1\sm P||B_2|
				+\tfrac 12 \delta n(|B_1|+|B_2|)-\tfrac 12\delta^2n^2\,.
		\]
				Adding the trivial upper bound for $e\bigl(B_1\sm P, V\sm (B_1\cup B_2)\bigr)$
		we obtain
				\[
			e(B_1\sm P, V\sm P)\le |B_1\sm P|\bigl(n-|B_1|-\tfrac 12|B_2|\bigr)
				+\tfrac 12 \delta n(|B_1|+|B_2|)-\tfrac 12\delta^2n^2\,.
		\]
				Combined with~\eqref{eq:eP} this shows the desired estimate
				\[
			e(B_1, V)\le |B_1|\bigl(n-|B_1|-\tfrac 12|B_2|\bigr)
				+\tfrac 12 \delta n(|B_1|+|B_2|)-\tfrac 12\delta^2n^2
		\]
				and the proof of Claim~\ref{clm:eB1} is thereby complete.
	\end{proof}
	
	Finally, the addition of the $r$ inequalities provided by the Claims~\ref{clm:eBi} 
	and~\ref{clm:eB1} reveals
		\[
		2e(G)=\sum_{i=1}^r e(B_i, V)\le 2\sum_{1\le i<j\le r}|B_i||B_j|-|B_1||B_2|
			+\delta n \sum_{i=1}^r |B_i| -\delta^2 n^2
	\]
		and Lemma~\ref{lem:ai} leads to
		\[
		2e(G)\le \bigl(\tfrac{3r-5}{3r-2}+\delta-\delta^2\bigr)n^2\,.
	\]
		Thereby Proposition~\ref{prop:exact} is proved. 		
\end{proof}


Now the following should be clear.

\begin{prop}\label{thm:mindeg}
	For every integer $r\ge 2$ there exist an integer $n_0$ and a positive real number $\delta_0$
	such that for every $\delta\le \delta_0$ every graph $G$ on $n\ge n_0$ vertices 
	with $K_{2r}\not\subseteq G$, $\delta(G)\ge\tfrac{3r-5}{3r-2}n$, and $\alpha(G)<\delta n$
	has at most $\bigl(\tfrac{3r-5}{3r-2}+\delta-\delta^2\bigr)\tfrac{n^2}2$ edges.
\end{prop}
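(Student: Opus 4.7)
The plan is that this proposition follows immediately by concatenating the two main results of Sections~\ref{sec:local} and~\ref{sec:refine}, namely Proposition~\ref{prop:eps} (which produces an $(r,\eps)$-exact partition) and Proposition~\ref{prop:exact} (which extracts the desired edge bound from such a partition). All the real work has already been done; what remains is only to choose the constants in the right order.

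First I would invoke Proposition~\ref{prop:exact} with the given integer $r$ to obtain a real number $\eps>0$ such that whenever $\delta\le\eps$ and an $n$-vertex graph $G$ with $K_{2r}\not\subseteq G$ and $\alpha(G)<\delta n$ admits an $(r,\eps)$-exact partition, it satisfies
\[
e(G)\le \bigl(\tfrac{3r-5}{3r-2}+\delta-\delta^2\bigr)\tfrac{n^2}{2}.
\]
Next, I would feed this $\eps$ (together with $r$) into Proposition~\ref{prop:eps}, which outputs an integer $n_0$ and a real number $\delta_1>0$ such that every $K_{2r}$-free graph on $n\ge n_0$ vertices with $\delta(G)\ge\tfrac{3r-5}{3r-2}n$ and $\alpha(G)<\delta_1 n$ does admit an $(r,\eps)$-exact partition.

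Finally I would set $\delta_0=\min(\eps,\delta_1)$ and verify the statement: any graph $G$ satisfying the hypotheses of the proposition with $\delta\le\delta_0$ in particular has $\alpha(G)<\delta_1 n$, so Proposition~\ref{prop:eps} produces an $(r,\eps)$-exact partition of $V(G)$; then, since also $\delta\le\eps$, Proposition~\ref{prop:exact} applies and yields the required inequality on $e(G)$.

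There is no genuine obstacle to overcome here: the entire content of the result sits inside Propositions~\ref{prop:eps} and~\ref{prop:exact}, and what I am describing is a two-line assembly step. The only thing to be mildly careful about is the order of quantifiers — $\eps$ must be chosen \emph{first} from Proposition~\ref{prop:exact}, and only afterwards passed into Proposition~\ref{prop:eps} to generate $n_0$ and $\delta_1$ — so that the exact partition supplied by Proposition~\ref{prop:eps} is fine enough for Proposition~\ref{prop:exact} to consume.
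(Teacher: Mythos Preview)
Your proposal is correct and matches the paper's own proof essentially line for line: the paper also first takes $\eps$ from Proposition~\ref{prop:exact}, then feeds it into Proposition~\ref{prop:eps} to obtain $n_0$ and a $\delta_0$, remarks that one may assume $\delta_0\le\eps$ (your $\min(\eps,\delta_1)$), and concludes by chaining the two propositions. Your emphasis on the correct order of choosing the constants is exactly the only point that needs care.
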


\begin{proof}
	Let $\eps>0$ be the number provided by Proposition~\ref{prop:exact}. 
	By plugging it into Proposition~\ref{prop:eps} we obtain some constants
	$n_0\in\NN$ and $\delta_0>0$. Without loss of generality we may suppose that 
	$\delta_0\le \eps$. To check that these two numbers have the desired 
	property we consider any graph $G$ on $n\ge n_0$ vertices satisfying the above 
	conditions for some~$\delta\le \delta_0\le \eps$. 
	
	Now Proposition~\ref{prop:eps} informs us that 
	$G$ has an $(r, \eps)$-exact partition and Proposition~\ref{prop:exact}
	yields the desired upper bound on $e(G)$. 
\end{proof}

The only things which are currently missing from a proof of Theorem~\ref{thm:main}
are that we still need to abolish the minimum degree condition and $n_0$. 
Essentially this can be done in the same way as in Section~\ref{sec:odd}, but for the sake of 
completeness we would like to include a sketch of the argument. 

\begin{proof}[Proof of Theorem~\ref{thm:main}]
	Let $n_0\in\NN$ and $\delta_0\in (0, 1)$ be as obtained by Proposition~\ref{thm:mindeg} 
	and set 
		\begin{equation*}\label{eq:delstar}
		\delta_*=\tfrac{1}{4}\min\bigl(\delta_0^2, n_0^{-2}\bigr)\,.
	\end{equation*}
		
	Due to the blow-up trick it suffices to show the apparently weaker statement that 
	if $\delta\le \delta_*$ and a $K_{2r}$-free graph $G$ on $n$ vertices satisfies 
	$\alpha(G)<\delta n$, then 
	\begin{equation}\label{eq:GG}
		e(G) \le \frac{3r-5}{3r-2}\cdot \frac{n^2+n}2+\frac{(\delta-\delta^2)n^2}2\,.
	\end{equation}

	Assuming again that this estimate fails we take a minimal set $X\subseteq V(G)$
	with 
	\begin{equation} \label{eq:evenXX}
		e(X) > \frac{3r-5}{3r-2}\cdot \frac{|X|^2+|X|}2+\frac{(\delta-\delta^2)n^2}2
	\end{equation}
	and denote the restriction of $G$ to $X$ by $G'$. Observe that $X\ne\varnothing$ and 
	put $n'=|X|$ as well as $\delta'=\delta n/n'$. Again the plan is to apply 
	Proposition~\ref{thm:mindeg} to $G'$ and $\delta'$ and the required estimates 
	$\delta(G')\ge \frac{3r-5}{3r-2}|X|$ as well as $\alpha(G')\le \delta' |X|$ hold
	for same reasons as above. Moreover, in view of 
	\[
		(n')^2\ge 2e(X)>\tfrac{3r-5}{3r-2}\bigl((n')^2+n'\bigr)+(\delta-\delta^2)n^2
		> \tfrac{3r-5}{3r-2}(n')^2 + \tfrac12\delta n^2
	\]
	we have, e.g., 
	\begin{equation}\label{eq:n'big}
		n'>\sqrt{\delta} n/2\,.
	\end{equation} 

	Thus $\delta'<2\sqrt{\delta}\le 2\sqrt{\delta_*}\le \delta_0$, meaning that 
	$\delta'$ is indeed sufficiently small. Moreover, since $\delta n>\alpha(G)\ge 1$,
	the estimate \eqref{eq:n'big} does also imply 
	\[
		n'>\frac 1{2\sqrt{\delta}}\ge \frac 1{2\sqrt{\delta_*}}\ge n_0\,,
	\]
	or in other words that $G'$ is still sufficiently large. 
	
	So altogether Proposition~\ref{thm:mindeg} implies 
	\[
		e(X)\le \frac{3r-5}{3r-2}\cdot \frac{|X|^2}2+\frac{\delta' n'\cdot n'-(\delta'n')^2}2
			< \frac{3r-5}{3r-2}\cdot \frac{|X|^2+|X|}2+\frac{\delta n\cdot n-(\delta n)^2}2\,,
	\]
	contrary to~\eqref{eq:evenXX}. This concludes the proof of~\eqref{eq:GG} and, hence, the 
	proof of Theorem~\ref{thm:main}.
\end{proof}

\begin{bibdiv}
\begin{biblist}

\bib{BE76}{article}{
   author={Bollob{\'a}s, B{\'e}la},
   author={Erd{\H{o}}s, Paul},
   title={On a Ramsey-Tur\'an type problem},
   journal={J. Combinatorial Theory Ser. B},
   volume={21},
   date={1976},
   number={2},
   pages={166--168},
   review={\MR{0424613}},
}

\bib{B10}{article}{
   author={Brandt, Stephan},
   title={Triangle-free graphs whose independence number equals the degree},
   journal={Discrete Math.},
   volume={310},
   date={2010},
   number={3},
   pages={662--669},
   issn={0012-365X},
   review={\MR{2564822}},
   doi={10.1016/j.disc.2009.05.021},
}

\bib{EHSS}{article}{
   author={Erd{\H{o}}s, P.},
   author={Hajnal, A.},
   author={S{\'o}s, Vera T.},
   author={Szemer{\'e}di, E.},
   title={More results on Ramsey-Tur\'an type problems},
   journal={Combinatorica},
   volume={3},
   date={1983},
   number={1},
   pages={69--81},
   issn={0209-9683},
   review={\MR{716422}},
   doi={10.1007/BF02579342},
}

\bib{ErSi66}{article}{
   author={Erd{\H{o}}s, P.},
   author={Simonovits, M.},
   title={A limit theorem in graph theory},
   journal={Studia Sci. Math. Hungar},
   volume={1},
   date={1966},
   pages={51--57},
   issn={0081-6906},
   review={\MR{0205876 (34 \#5702)}},
}

\bib{ES69}{article}{
   author={Erd{\H{o}}s, P.},
   author={S{\'o}s, Vera T.},
   title={Some remarks on Ramsey's and Tur\'an's theorem},
   conference={
      title={Combinatorial theory and its applications, II},
      address={Proc. Colloq., Balatonf\"ured},
      date={1969},
   },
   book={
      publisher={North-Holland, Amsterdam},
   },
   date={1970},
   pages={395--404},
   review={\MR{0299512}},
}

\bib{ErSt46}{article}{
   author={Erd{\H{o}}s, P.},
   author={Stone, A. H.},
   title={On the structure of linear graphs},
   journal={Bull. Amer. Math. Soc.},
   volume={52},
   date={1946},
   pages={1087--1091},
   issn={0002-9904},
   review={\MR{0018807 (8,333b)}},
}

\bib{FK99}{article}{
   author={Frieze, Alan},
   author={Kannan, Ravi},
   title={Quick approximation to matrices and applications},
   journal={Combinatorica},
   volume={19},
   date={1999},
   number={2},
   pages={175--220},
   issn={0209-9683},
   review={\MR{1723039}},
   doi={10.1007/s004930050052},
}

\bib{FLZ15}{article}{
   author={Fox, Jacob},
   author={Loh, Po-Shen},
   author={Zhao, Yufei},
   title={The critical window for the classical Ramsey-Tur\'an problem},
   journal={Combinatorica},
   volume={35},
   date={2015},
   number={4},
   pages={435--476},
   issn={0209-9683},
   review={\MR{3386053}},
   doi={10.1007/s00493-014-3025-3},
}

\bib{Lu06}{article}{
   author={{\L}uczak, Tomasz},
   title={On the structure of triangle-free graphs of large minimum degree},
   journal={Combinatorica},
   volume={26},
   date={2006},
   number={4},
   pages={489--493},
   issn={0209-9683},
   review={\MR{2260851 (2007e:05077)}},
   doi={10.1007/s00493-006-0028-8},
}

\bib{LR-b}{unpublished}{
	author={L\"uders, Clara Marie},
	author={Reiher, Chr.}, 
	title={Weighted variants of the Andr\'asfai-Erd\H{o}s-S\'os Theorem}, 
	note={Preprint},
}

\bib{Ramsey30}{article}{
   author={Ramsey, Frank Plumpton},
   title={On a Problem of Formal Logic},
   journal={Proceedings London Mathematical Society},
   volume={30},
   date={1930},
   number={1},
   pages={264--286},
         doi={10.1112/plms/s2-30.1.264},
}

\bib{SS01}{article}{
   author={Simonovits, Mikl{\'o}s},
   author={S{\'o}s, Vera T.},
   title={Ramsey-Tur\'an theory},
   note={Combinatorics, graph theory, algorithms and applications},
   journal={Discrete Math.},
   volume={229},
   date={2001},
   number={1-3},
   pages={293--340},
   issn={0012-365X},
   review={\MR{1815611}},
   doi={10.1016/S0012-365X(00)00214-4},
}

\bib{Sz72}{article}{
   author={Szemer{\'e}di, Endre},
   title={On graphs containing no complete subgraph with $4$ vertices},
   language={Hungarian},
   journal={Mat. Lapok},
   volume={23},
   date={1972},
   pages={113--116 (1973)},
   issn={0025-519X},
   review={\MR{0351897}},
}
	
\bib{Sz78}{article}{
   author={Szemer{\'e}di, Endre},
   title={Regular partitions of graphs},
   language={English, with French summary},
   conference={
      title={Probl\`emes combinatoires et th\'eorie des graphes},
      address={Colloq. Internat. CNRS, Univ. Orsay, Orsay},
      date={1976},
   },
   book={
      series={Colloq. Internat. CNRS},
      volume={260},
      publisher={CNRS, Paris},
   },
   date={1978},
   pages={399--401},
   review={\MR{540024}},
}
	
\bib{Turan}{article}{
   author={Tur{\'a}n, Paul},
   title={Eine Extremalaufgabe aus der Graphentheorie},
   language={Hungarian, with German summary},
   journal={Mat. Fiz. Lapok},
   volume={48},
   date={1941},
   pages={436--452},
   review={\MR{0018405}},
}

\bib{Zy}{article}{
   author={Zykov, A. A.},
   title={On some properties of linear complexes},
   language={Russian},
   journal={Mat. Sbornik N.S.},
   volume={24(66)},
   date={1949},
   pages={163--188},
   review={\MR{0035428}},
}
\end{biblist}
\end{bibdiv}
\end{document}